\documentclass[a4paper,11pt]{amsart}

\usepackage{amsmath}
\usepackage{amssymb}
\usepackage{amsthm}
\usepackage{comment}
\usepackage{enumitem}
\usepackage{float}
\usepackage{mathtools}
\usepackage{tikz-cd}
\usepackage{xcolor}

\usepackage{tikz}
\usetikzlibrary{arrows.meta,decorations.markings}

\usepackage[textsize=footnotesize]{todonotes}

\usepackage[hidelinks]{hyperref}

\newtheoremstyle{lemnum}{\medskipamount}{\topsep}{\itshape}{}{\bfseries}{.}{ }{\thmname{#1}\thmnote{ \bfseries #3}}

\newtheorem*{thm*}{Theorem}
\newtheorem*{lem*}{Lemma}
\newtheorem{thm}{Theorem}[section]
\newtheorem{lem}[thm]{Lemma}
\newtheorem{prop}[thm]{Proposition}
\newtheorem{cor}[thm]{Corollary}
\theoremstyle{lemnum}
\newtheorem{lemnum}[thm]{Lemma}
\theoremstyle{definition}
\newtheorem{df}[thm]{Definition}
\newtheorem{ex}[thm]{Example}
\theoremstyle{remark}
\newtheorem{rmk}[thm]{Remark}

\makeatletter
\g@addto@macro\bfseries{\boldmath}
\makeatother

\numberwithin{equation}{section}

\DeclareMathOperator{\Coin}{Coin} 
\DeclareMathOperator{\coin}{coin} 
\DeclareMathOperator{\Fix}{Fix}   
\DeclareMathOperator{\id}{id}	  
\DeclareMathOperator{\ind}{ind}   
\DeclareMathOperator{\MC}{MC}	  
\DeclareMathOperator{\MF}{MF}	  
\renewcommand{\MR}{\mathrm{MR}}	  
\DeclareMathOperator{\pr}{pr}	  
\DeclareMathOperator{\Root}{Root} 

\newcommand{\QQ}{\mathbb{Q}}      
\newcommand{\RR}{\mathbb{R}}      
\newcommand{\ZZ}{\mathbb{Z}}      

\newcommand{\g}{\mathfrak{g}}     
\newcommand{\orb}{\backslash}     
\newcommand{\R}{\mathcal{R}}      
\renewcommand{\S}{\Sigma}         

\newcommand{\eps}{\varepsilon}
\renewcommand{\iff}{\enskip\Leftrightarrow\enskip}

\title[Nielsen coincidence theory for $(n,1)$-valued pairs]{Nielsen coincidence theory \\ for $(n,1)$-valued pairs}
\author{Karel Dekimpe and Lore De Weerdt}
\thanks{Research supported by Methusalem grant METH/21/03 -- long term structural funding of the Flemish Government.}
\address{KU Leuven Campus Kulak Kortrijk, 8500 Kortrijk, Belgium}
\email{Karel.Dekimpe@kuleuven.be}
\email{Lore.DeWeerdt@kuleuven.be}
\begin{document}

\maketitle

\begin{abstract}
We generalise Nielsen theory to coincidences of pairs $(f,g)$ where $f:X\multimap Y$ is $n$-valued multimap and $g:X\to Y$ is a single-valued map, for $X$ and $Y$ closed oriented triangulable manifolds of equal dimension. We prove a Wecken theorem in this setting, and formulas for the Nielsen, Lefschetz and Reidemeister numbers in terms of the analogous invariants for single-valued maps. If $X$ and $Y$ are orientable infra-nilmanifolds, we derive explicit formulas in terms of the fundamental group morphisms of $f$ and $g$.
\end{abstract}

\section{Introduction}

Let $f:X\to X$ be a self-map of a topological space $X$. In classical Nielsen fixed point theory, one is concerned with the number \[
\MF[f]\vcentcolon=\min\{\# \Fix(g) \mid g\simeq f \}
\]
where $\Fix(g)=\{x\in X\mid g(x)=x\}$ is the set of fixed points of $g$, and $\simeq$ denotes homotopy.
If $X$ is a finite polyhedron (or more generally a compact ANR), a homotopy-invariant lower bound for $\MF[f]$ is the Nielsen number $N(f)$, defined in the following two steps (for more details, see e.g.\ \cite{jiang}). First, the fixed points of $f$ are subdivided into \emph{fixed point classes}, defined using an equivalence relation on the set of lifts of $f$ to the universal covering space of $X$. To each fixed point class, one can associate an integer called the \emph{fixed point index}, which is zero if the fixed point class can become empty after a homotopy. The Nielsen number is the number of fixed point classes with non-zero index (called \emph{essential} fixed point classes). A celebrated theorem due to Wecken \cite{wecken3} is:

\begin{thm}[Wecken Theorem]
If $X$ is a compact manifold of dimension at least $3$, then $N(f)=\MF[f]$ for any map $f:X\to X$.
\end{thm}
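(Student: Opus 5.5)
The proof is the classical two-step Wecken argument. We take $X$ to be a compact triangulable manifold, possibly with boundary, of dimension $m\ge 3$, and we fix $f:X\to X$. Since $N(f)$ is a homotopy-invariant lower bound for $\MF[f]$, only the inequality $\MF[f]\le N(f)$ needs proof. For this we construct a map $g\simeq f$ having exactly one fixed point in each essential fixed point class and none in any inessential class. The plan has three stages.

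\emph{Step 1 (finiteness and general position).} Using the triangulation, we first homotope $f$ to a simplicial approximation. We then perturb it so that $\Fix(f)$ is a finite set lying in the interiors of top-dimensional simplices, away from $\partial X$. In the boundary case we push $f$ slightly inward along a collar, so that no fixed points lie on $\partial X$. This step is standard.

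\emph{Step 2 (uniting fixed points within a class).} Let $x_0,x_1$ be two fixed points in the same fixed point class. By definition there is a path $c$ from $x_0$ to $x_1$ with $f\circ c\simeq c$ rel endpoints. Because $m\ge 3$, we may take $c$ to be an embedded arc that avoids the other fixed points and $\partial X$. We then modify $f$ by a homotopy supported in a small ball neighbourhood $U$ of $c$, replacing $f|_U$ by a map that has a single fixed point in $U$ with index equal to the sum of the two indices. The homotopy $f\circ c\simeq c$ rel endpoints is exactly what lets the new map agree with $f$ on $\partial U$ while $f(U)$ is pulled into a coordinate chart around $c$. This is Wecken's merging lemma, and I expect it to be the main obstacle. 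The delicate point is to straighten the loop formed from $f\circ c$ and $c$ by a homotopy supported near $c$ without creating new fixed points, and this is where the hypothesis $m\ge3$ is used.

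\emph{Step 3 (removing inessential classes and finishing).} Iterating Step 2, each fixed point class becomes a single point, whose index is the index of the class. A single fixed point of index zero in a ball can be removed by a local homotopy. Indeed, on a small sphere around the point the map $x\mapsto x-f(x)$ has degree zero, so it extends to a nowhere-vanishing map on the ball, and this gives a fixed-point-free local replacement. After these removals the resulting map $g$ has exactly $N(f)$ fixed points, and therefore $\MF[f]\le N(f)$.
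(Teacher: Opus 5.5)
Your three-step architecture is the right one. It is the classical Wecken argument, and it is exactly the structure of the paper's proof of its $(n,1)$-valued version, Theorem \ref{thm:Wecken}; the paper itself only cites the classical statement. The gap is Step 2, which carries the entire content of the theorem but is only named, and your description of it is not coherent as written. First, you cannot ``straighten'' $f\circ c$ onto $c$ without creating fixed points, since at the end of such a deformation every point of $c(I)$ is fixed. Second, no single homotopy supported in a small $U$ can leave $f$ unchanged on $\partial U$ while sending $U$ into a chart around $c(I)$: $f(\partial U)$ lies near $f(c(I))$, which may be far from $c(I)$. The merge has to be done in two stages, and the chart neighbourhood can only be chosen after the first.

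What has to be supplied is the following. (a) Choose a path $\omega$, homotopic to $c$ rel endpoints and uniformly close to it, with $\omega(t)\neq c(t)$ for $0<t<1$ (as in Lemma \ref{lem:omega} with $g=\id$). (b) Choose a path homotopy $H:I\times I\to X$ from $f\circ c$ to $\omega$ with $H(t,s)\neq c(t)$ for \emph{all} $s\in I$ and all $0<t<1$. This is a coincidence-removal problem for the maps $(t,s)\mapsto H(t,s)$ and $(t,s)\mapsto c(t)$ from the $2$-dimensional polyhedron $I\times I$ into the $m$-manifold $X$, relative to the edges $t=0,1$ where coincidences are forced. General position solves it precisely because $2<m$; this is Schirmer's Coincidence Lemma, used exactly this way in the proof of Lemma \ref{lem:f-to-omega}. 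This is the essential use of $m\geq 3$, and it is what breaks down for surfaces. (c) Extend $H$, regarded as a homotopy of $f|_{c(I)}$, to a homotopy of $f$ on all of $X$ whose fixed point set never changes (the special homotopy extension property, Lemma \ref{lem:special-homotopy-extension}). The condition in (b) for every $s$, not only $s=1$, is what makes this possible. (d) Only then take a thin tubular neighbourhood $U$ of $c(I)$ that the new map $f'$ sends into a chart containing $U$. Replace $f'$ rel $\partial U$ by a map with one fixed point if the degree of $x\mapsto x-f'(x)$ on $\partial U$ (the sum of the two indices) is non-zero, and with none if it is zero. This also absorbs your Step 3 (compare Lemma \ref{lem:schirmer-SatzVIIIa}).

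Once this is filled in, your route is slightly better adapted to $\MF[f]$ than a specialisation of the paper's proof. The paper deforms $g$ in its last local step, so for $g=\id$ it yields $\MC[f,\id]$ and needs Theorem \ref{thm:MC} to get back to $\MF[f]$; it also assumes closed orientable manifolds. Deforming only $f$, as you do, gives $\MF[f]$ directly, orientation is irrelevant for the fixed point index, and your collar trick handles $\partial X$.
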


To show this, Wecken used another definition of fixed point class, where two fixed points lie in the same class if there is a path $c$ between them so that the paths $fc$ and $c$ are homotopic\footnote{With a homotopy of paths, we will always mean a homotopy relative to the endpoints.}, which has become a more commonly used definition for generalisations. (This definition is almost equivalent to the one using lifts: the non-empty classes coincide for both definitions, but in the definition using lifts, some classes may be emtpy, and these are not detected by the definition using paths.)

In order to motivate our work, we review some of the settings to which Nielsen theory has been generalised.

\subsection{Coincidence theory}

For two maps $f,g:X\to Y$, one can consider the set of coincidence points \[
\Coin(f,g)=\{x\in X\mid f(x)=g(x)\}
\]
and study the number \[
\MC[f,g]=\min\{\# \Coin(f',g') \mid f'\simeq f,g'\simeq g \}.
\]
Fixed point classes can be generalised to coincidence classes, by calling two coincidence points equivalent if there is a path $c$ between them such that $fc\simeq gc$. To define essentiality of a coincidence class, one usually restricts to the setting where $X$ and $Y$ are closed orientable manifolds of the same dimension, where an integer-valued coincidence index is defined. In this setting, Schirmer \cite{schirmer-coin} defined the Nielsen number $N(f,g)$ as the number of coincidence classes with non-zero index, and showed that it satisfies $N(f,g)=\MC[f,g]$ in dimension at least $3$.

\subsection{Root theory}

For a map $f:X\to Y$ and a point $a\in Y$, one can consider the set of roots of $f$ at $a$, \[
\Root(f)=\{x\in X\mid f(x)=a \}.
\]
A Nielsen theory for roots, concerned with the number \[
\MR[f]=\min\{\#\Root(g) \mid g\simeq f \},
\]
has been developed by Brooks \cite{brooks-phd}. Rather than working with an index, he defines root classes (as well as coincidence classes in general) to be essential if they cannot vanish after a homotopy. This geometric notion of essentiality can be defined without the strong restrictions on the spaces $X$ and $Y$ required for an index, but the resulting Nielsen number is harder to compute.

Note that fixed points can be viewed as a special case of coincidences by taking $X=Y$ and $g:X\to X$ the identity map, and roots can be viewed as a special case of coincidences by taking $g:X\to Y$ the constant map $x\mapsto a$. But the fixed point and root problems described above are not necessarily special cases of the coincidence problem, since for $\MC[f,g]$, both $f$ and $g$ may be deformed, while in $\MF[f]$ and $\MR[f]$ the identity map resp.\ the constant map stays fixed. However, Brooks \cite{brooks1971} showed that whenever $Y$ is a topological manifold, one has \[
\MC[f,g]=\MC(g)[f]\vcentcolon=\min\{\# \Coin(f',g) \mid f'\simeq f \}.
\]
Thus, in that setting, fixed point theory and root theory can really be viewed as special cases of coincidence theory.

\subsection{Fixed point theory of $n$-valued maps}

Another direction in which Nielsen theory has been generalised, again by Schirmer \cite{schirmer2}, is that of fixed points of $n$-valued self-maps $f:X\multimap X$ of finite polyhedra. An $n$-valued map is an upper- and lower-semicontinuous multifunction where $f(x)\subseteq X$ has cardinality exactly $n$ for every $x\in X$ (see e.g.\ \cite{browngoncalves}). In this setting, one defines \[
\Fix(f)=\{x\in X\mid x\in f(x)\},
\] 
and the number $\MF[f]$ accordingly. A property of $n$-valued maps which the theory in \cite{schirmer2} heavily relies on, is that when $X$ is simply connected, any $n$-valued map $f:X\multimap Y$ \emph{splits} into $n$ single-valued maps, written $f=\{f_1,\ldots,f_n\}$. This holds in particular when $X$ is the unit interval $I$, so for any path $c:I\to X$, the image $fc$ splits into $n$ paths in $Y$. Fixed points of an $n$-valued map $f:X\multimap X$ are then called equivalent if there exists a path $c$ between them such that, if $fc=\{c_1,\ldots,c_n\}$, one of the paths $c_i$ is homotopic to $c$. To define an index, Schirmer approximates the map $f$ by a map $f'$ with finitely many fixed points. Around each fixed point $x$, she can then take an isolating neighborhood on which $f'$ splits, so that $x$ is a fixed point of precisely one of the splitting factors, and the index of $x$ can be defined as the single-valued index of this factor at its isolated fixed point $x$. The resulting Nielsen number $N(f)$ is a well-defined homotopy-invariant lower bound for $\MF[f]$, and is equal to $\MF[f]$ in case $X$ is a compact manifold of dimension at least $3$ (see \cite{schirmer3}).

\subsection{Coincidence and root theory of $n$-valued maps}

Generalisations to coincidences and roots of $n$-valued maps have been defined by Brown and Kolahi \cite{brownkolahi}, but only using Brooks' geometric definition of essentiality, and without showing any results regarding the sharpness of the bound $N(f,g)\leq \MC[f,g]$. Moreover, there is a mistake in their paper which makes the Nielsen number (for coincidences in general) ill-defined. 

For an $n$-valued map $f:X\multimap Y$ and an $m$-valued map $g:X\multimap Y$, they define \[
\Coin(f,g)=\{x\in X \mid f(x)\cap g(x)\neq \varnothing \}.
\]
Two points in $\Coin(f,g)$ are called equivalent if there is a path $c:I\to X$ such that the multimaps $fc:I\multimap Y$ and $gc:I\multimap Y$ split as $\{c_1,\ldots,c_n \}$ and $\{d_1,\ldots,d_m \}$ respectively, where for some $i\in \{1,\ldots,n\}$ and $j\in \{1,\ldots,m\}$, the paths $c_i$ and $d_j$ are homotopic. 
However, this is in general not an equivalence relation, as the following example demonstrates.

\begin{ex}
Let $f,g:S^1\multimap S^1$ be the $2$-valued maps of the circle given by the graphs below (where $S^1$ is represented as an interval with the endpoints identified).

\begin{figure}[h!]
\begin{tikzpicture}[scale=3]
\draw (0,0) rectangle (1,1);
\draw[thick,red] (0,0.33)--(1,0.33);
\draw[thick,red] (0,0.67)--(1,0.67);
\draw[thick,blue] (0,0)--(0.33,0.33)--(0.5,0.33)--(1,0.5);
\draw[thick,blue] (0,0.5)--(0.5,0.67)--(0.67,0.67)--(1,1);
\draw[red] (0.15,0.75) node{\small$f$};
\draw[blue] (0.25,0.5) node{\small$g$};
\draw[dashed] (0.33,0)node[below]{\small$x_0$}--(0.33,0.33);
\draw[dashed] (0.5,0)node[below]{\small$x_1$}--(0.5,0.67);
\draw[dashed] (0.67,0)node[below]{\small$x_2$}--(0.67,0.67);
\end{tikzpicture}
\end{figure}

Clearly, the coincidence point $x_1$ is equivalent both to $x_0$ and to $x_2$, since in both cases, if $c$ is the line segment between them, there is a factor of $fc$ and a factor of $gc$ that have the same graph and are clearly homotopic. On the other hand, $x_0$ is not equivalent to $x_2$, since for any path $c$ between them, there is no factor of $fc$ with the same begin and endpoint as a factor of $gc$.

Thus, the coincidence relation is not transitive. In terms of coincidence classes, there would be two classes, and $x_1$ would belong to both of them.
\end{ex}

Since a point can belong to multiple coincidence classes, also the statement of \cite[Lemma 2.1]{brownkolahi} that, for a homotopy $(F,G)=\{(f_t,g_t)\mid t\in I\}$, each coincidence class of $(f_0,g_0)$ lies in a \textit{unique} coincidence class of $(F,G)$ is false, rendering their definition of essentiality of a coincidence class (for every homotopy, its unique coincidence class containing this class has non-empty intersection with $X\times \{1\}$) and therefore the Nielsen number (the number of essential coincidence classes) ill-defined.

Note that this problem cannot occur as soon as one of the maps, say $g$, is single-valued: for $x_0,x_1,x_2\in \Coin(f,g)$, suppose $c$ is a path from $x_0$ to $x_1$ such that $fc=\{c_1,\ldots,c_n\}$ with $c_i\simeq gc$ for some $i$, and $d$ is a path from $x_1$ to $x_2$ such that $fd=\{d_1,\ldots,d_n\}$ with $d_j\simeq gd$ for some $j$. The concatenation $e$ of $c$ and $d$ is a path from $x_0$ to $x_2$; write $fe=\{e_1,\ldots,e_n\}$. Since $c_i(1)=gc(1)=gd(0)=d_j(0)$, one of the factors $e_k$ must be the concatenation of $c_i$ and $d_j$. Since $c_i\simeq gc$ and $d_j\simeq gd$, the concatenation $e_k$ of $c_i$ and $d_j$ is homotopic to the concatenation of $gc$ and $gd$, which is $ge$.

The purpose of this paper is to develop a Nielsen theory in this setting, i.e.\ for coincidences of a pair $(f,g)$ where $f:X\multimap Y$ is an $n$-valued map and $g:X\to Y$ is a single-valued map.

\subsection{The coincidence problem for $(n,1)$-valued pairs}

For an $n$-valued map $f:X\multimap Y$ and a single-valued map $g:X\to Y$, we consider \[
\Coin(f,g)=\{x\in X \mid g(x)\in f(x) \}.
\]
Special cases of this are coincidences (and therefore also fixed points and roots) of single-valued maps (by taking $n=1$), fixed points of $n$-valued maps (for $X=Y$ and $g:X\to X$ the identity map), and roots of $n$-valued maps (i.e., for a constant $a\in Y$, the points $x\in X$ such that $a\in f(x)$, so the coincidences of $f$ with the constant single-valued map $x\mapsto a$). Considering the problem pointed out in the previous section, this is the most general setting containing all of the above ones as special cases, in which a Nielsen theory can be defined.

First of all, we observe the following useful fact about the coincidence set defined above:

\begin{lem}\label{lem:coin-closed}
If $Y$ is Hausdorff, then $\Coin(f,g)$ is closed.
\end{lem}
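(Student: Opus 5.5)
We prove that the complement $X\setminus\Coin(f,g)$ is open, using the upper semicontinuity of $f$ together with the continuity of $g$ and the Hausdorff property of $Y$.

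Fix $x_0\notin\Coin(f,g)$, so $g(x_0)\notin f(x_0)$. The set $f(x_0)=\{y_1,\ldots,y_n\}$ is finite. Since $Y$ is Hausdorff, for each $i$ we choose disjoint open sets $U_i\ni y_i$ and $V_i\ni g(x_0)$. Put
\[
U=U_1\cup\cdots\cup U_n,\qquad V=V_1\cap\cdots\cap V_n .
\]
Then $U$ is an open set containing $f(x_0)$, $V$ is an open neighbourhood of $g(x_0)$, and $U\cap V=\varnothing$.

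Upper semicontinuity of $f$ gives an open neighbourhood $W_1$ of $x_0$ with $f(x)\subseteq U$ for all $x\in W_1$. Continuity of $g$ gives an open neighbourhood $W_2$ of $x_0$ with $g(x)\in V$ for all $x\in W_2$. For $x\in W_1\cap W_2$ we then have $g(x)\in V$ and $f(x)\subseteq U$. Since $U\cap V=\varnothing$, it follows that $g(x)\notin f(x)$. Hence $W_1\cap W_2$ is a neighbourhood of $x_0$ disjoint from $\Coin(f,g)$, so the complement is open and $\Coin(f,g)$ is closed.

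The only point needing care is the separation step. In a Hausdorff space a point and a finite set can be separated by disjoint open sets. This is exactly where finiteness of $f(x_0)$ and the Hausdorff hypothesis are used; compactness of $f(x_0)$ would suffice as well.

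An alternative route is to use that the graph of an upper semicontinuous multimap with closed values into a Hausdorff space is closed. One would then write $\Coin(f,g)$ as the preimage of that graph under the continuous map $x\mapsto(x,g(x))$. The direct argument above avoids citing this fact.
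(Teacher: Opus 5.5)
Your proof is correct and is essentially the paper's argument unpacked. The paper views $x\mapsto\{(y,g(x))\mid y\in f(x)\}$ as an upper semicontinuous $n$-valued map into $Y\times Y$ and notes that $\Coin(f,g)$ is the lower inverse image of the diagonal, which is closed because $Y$ is Hausdorff; your separation of $g(x_0)$ from the finite set $f(x_0)$ checks this directly, and it avoids relying on the paper's unproved `easy to see' claim that the product multimap is continuous. One minor correction to your closing remark: a closed graph for an upper semicontinuous multimap with merely closed values is guaranteed when $Y$ is regular, or when the values are compact and $Y$ is Hausdorff, but not in general for closed values in a Hausdorff space; since the values here are finite, hence compact, that alternative route still works.
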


\begin{proof}
It is easy to see that, given an $n$-valued map $f:X\to Y$ and an $m$-valued map $g:X\to Z$, the multifunction \[
(f,g):X\to Y\times Z:x\mapsto \{(y,z) \mid y\in f(x), z\in g(x) \}
\]
is $nm$-valued and continuous. In particular, in our setting, we can consider the $n$-valued map \[
(f,g):X\multimap Y\times Y:x\mapsto \{(y,g(x)) \mid y\in f(x) \}.
\]
Since $Y$ is Hausdorff, the diagonal $\Delta=\{(y,y)\mid y\in Y\}$ is closed in $Y\times Y$. Since $(f,g)$ is continuous (in particular, upper-semicontinuous), the (lower) inverse image \[
(f,g)^{-1}(\Delta)=\{x\in X\mid (f,g)(x)\cap \Delta\neq \varnothing\}=\Coin(f,g)
\]
is closed as well.
\end{proof}

We can again consider \[
\MC[f,g]=\min\{\# \Coin(f',g') \mid f'\simeq f,g'\simeq g \}.
\]
As in the single-valued case, we can show that if $Y$ is a manifold, \[
\MC[f,g]=\MC(g)[f]\vcentcolon=\min\{\#\Coin(f',g)\mid f'\simeq f\},
\]
so that the $n$-valued fixed point and root problems are special cases of the coincidence problem.
This follows from the theorem below: indeed, given an $n$-valued map $f:X\multimap Y$ and a map $g:X\to Y$, for any two homotopies $f'\simeq f$ and $g'\simeq g$ there exists an $f''\simeq f$ such that $\Coin(f',g')=\Coin(f'',g)$ -- this follows by applying the following theorem to $f_0=f'$, $g_0=g'$ and $g_1=g$.

\begin{thm}\label{thm:MC}
Let $X$ be a topological space and $Y$ a topological manifold. Let $f_0:X\multimap Y$ be an $n$-valued map and $g_0:X\to Y$ a map. For any homo\-topy $\{g_t:X\to Y\mid t\in I\}$, there exists a homotopy $\{f_t:X\multimap Y\mid t\in I\}$ of $n$-valued maps such that \[
\Coin(f_t,g_t)=\Coin(f_0,g_0)
\]
for all $t\in I$.
\end{thm}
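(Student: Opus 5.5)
The plan is to adapt Brooks' single-valued argument to the $n$-valued setting. The idea is to transport each value of $f_0(x)$ along $Y$ by a homeomorphism of $Y$ that moves $g_0(x)$ to $g_t(x)$. Concretely, I would like a continuous family of homeomorphisms
\[
\phi:Y\times Y\times I\to Y,\qquad (a,b,t)\mapsto \phi_{a,b,t},
\]
with $\phi_{a,a,t}=\id$ and $\phi_{a,b,0}=\id$, and with $\phi_{a,b,1}(a)=b$ for $a,b$ close. Given such a family, I would define
\[
f_t(x)=h_{x,t}\bigl(f_0(x)\bigr),
\]
where $h_{x,t}$ is a homeomorphism of $Y$ depending continuously on $(x,t)$, satisfying $h_{x,0}=\id$ and $h_{x,t}(g_0(x))=g_t(x)$. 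Since each $h_{x,t}$ is a bijection, $f_t(x)$ has exactly $n$ points. Moreover,
\[
g_t(x)\in f_t(x)\iff h_{x,t}^{-1}(g_t(x))\in f_0(x)\iff g_0(x)\in f_0(x),
\]
which gives $\Coin(f_t,g_t)=\Coin(f_0,g_0)$ for every $t$. Continuity of $(x,t)\mapsto f_t(x)$ as an $n$-valued map follows because the map $(x,t,y)\mapsto h_{x,t}(y)$ is jointly continuous and $f_0$ is continuous. In particular, for fixed $x$, the $n$ values move along $n$ disjoint paths. This also gives the homotopy property, taking $F(x,t)=f_t(x)$ on $X\times I$.

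The key step is constructing $h_{x,t}$. This is exactly the lemma underlying Brooks' theorem $\MC[f,g]=\MC(g)[f]$, and I would reprove it as follows.

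First I handle the local case. On $\RR^m$, or in a coordinate ball $B$, choose a compactly supported isotopy $\psi_{a,b}$ that slides $a$ to $b$, depending continuously on $(a,b)$ for $a,b$ in a smaller ball. An explicit choice is the translation by $b-a$, damped by a bump function equal to $1$ on the smaller ball. This is a homeomorphism whenever $|b-a|$ is small relative to the damping gradient, and it is the identity outside $B$. Its straight-line parametrisation $\psi_{a,b}^s$, translating by $s(b-a)$, gives an isotopy.

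Next I pass to the global case. Lebesgue numbers are unavailable since $X$ is arbitrary, so I would instead subdivide the time direction. By compactness of $I$, for each $x$ there are a neighbourhood $U_x$ and a subdivision $0=t_0<\dots<t_k=1$ such that, for $t\in[t_{i-1},t_i]$, the point $g_t(x')$ stays in a single chart ball for all $x'\in U_x$. On each such interval I would set $h_{x,t}=\psi_{g_{t_{i-1}}(x),\,g_t(x)}\circ h_{x,t_{i-1}}$ and compose inductively. Each factor sends $g_{t_{i-1}}(x)$ to $g_t(x)$, so the composite sends $g_0(x)$ to $g_t(x)$.

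The main obstacle is making this globally continuous in $x$ when $X$ is only a topological space, with no paracompactness assumed. The local constructions on different $U_x$ use different subdivisions and charts, and these need not agree.

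To get around this, I would try a different route: avoid choosing $h$ globally and define $f_t$ directly via the paths. Only the local homeomorphism family near each time slice is needed to make sense of the definition. Continuity is a local property in $(x,t)$, so consistency is the only issue. I would aim to obtain consistency by making the choice canonical. Concretely, $h_{x,t}$ is determined by the path $s\mapsto g_s(x)$ restricted to $[0,t]$ through a fixed rule, for instance via an atlas-independent construction such as working in the path space or using Brooks' original lemma. Failing that, I would simply invoke Brooks' result in the stated generality of an arbitrary topological space $X$ and a manifold $Y$, together with its homeomorphism-family form.
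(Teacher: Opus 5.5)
Your reduction is the paper's: the paper also takes Brooks' fibre-preserving homeomorphism $H$ of $X\times I\times Y$ with $H(x,t,g_0(x))=(x,t,g_t(x))$, pushes $f_0(x)$ through the resulting homeomorphisms of $Y$, and checks $n$-valuedness and $\Coin(f_t,g_t)=\Coin(f_0,g_0)$ just as you do. So your own construction of $h_{x,t}$, which as you note does not glue for arbitrary $X$, can be dropped in favour of that citation, which is all the paper relies on. One detail: Brooks' $H$ need not be the identity at $t=0$, so take $h_{x,t}=h(x,t,\cdot)\circ h(x,0,\cdot)^{-1}$, whose joint continuity uses that $H^{-1}$ is continuous; this is exactly the paper's $h(x,t,k(x,0,y))$.
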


\begin{proof}
In \cite[Proof of Lemma, p.~49]{brooks1971}, Brooks shows that if $Y$ is a topological manifold, then for any homotopy $\{g_t:X\to Y\mid t\in I\}$, there exists a homeomorphism \[
H:X\times I\times Y\to X\times I\times Y
\] 
such that $H(x,t,g_0(x))=(x,t,g_t(x))$ for all $(x,t)\in X\times I$, and $p\circ H=p$, where $p:X\times I\times Y\to X\times I$ is the projection. It follows that there are maps $h:X\times I\times Y\to Y$ and $k:X\times I\times Y\to Y$ such that \begin{align*}
H(x,t,y)&=(x,t,h(x,t,y)) \\ H^{-1}(x,t,y)&=(x,t,k(x,t,y))
\end{align*}
for $(x,t,y)\in X\times I\times Y$, with $h(x,t,g_0(x))=g_t(x)$ and $k(x,t,g_t(x))=g_0(x)$ for all $(x,t)\in X\times I$, and \[
h(x,t,k(x,t,y))=y=k(x,t,h(x,t,y))
\]
for all $(x,t,y)\in X\times I\times Y$, since \begin{align*}
(x,t,h(x,t,k(x,t,y)))=H(x,t&,k(x,t,y))=H(H^{-1}(x,t,y))=(x,t,y) \\
(x,t,k(x,t,h(x,t,y)))=H^{-1}(x&,t,h(x,t,y))=H^{-1}(H(x,t,y))=(x,t,y).
\end{align*}

We can then define \[
f_t:X\multimap Y:x\mapsto \{h(x,t,k(x,0,y)) \mid y\in f_0(x) \}
\]
for all $t\in I$. This function is continuous, since $f_0$, $h$ and $k$ are continuous. It is $n$-valued, for if $h(x,t,k(x,0,y))=h(x,t,k(x,0,y'))$, then \begin{align*}
y&=h(x,0,k(x,t,h(x,t,k(x,0,y)))) \\
&=h(x,0,k(x,t,h(x,t,k(x,0,y'))))=y'.
\end{align*}
For $t=0$, this is the original map $f_0$: for all $x\in X$, \[
\{h(x,0,k(x,0,y))\mid y\in f_0(x)\}=\{y\mid y\in f_0(x)\}=f_0(x).
\]

Fix $t\in I$. To show $\Coin(f_0,g_0)\subseteq \Coin(f_t,g_t)$, suppose $g_0(x)\in f_0(x)$. Then \begin{align*}
g_t(x)&=h(x,t,g_0(x))\\&=h(x,t,k(x,0,g_0(x)))\\&\in \{h(x,t,k(x,0,y)) \mid y\in f_0(x) \}=f_t(x).
\end{align*}
Conversely, suppose $g_t(x)\in f_t(x)$. Then $g_t(x)=h(x,t,k(x,0,y))$ for some $y\in f_0(x)$, so \begin{align*}
g_0(x)&=h(x,0,g_0(x))\\&=h(x,0,k(x,t,g_t(x)))\\&=h(x,0,k(x,t,h(x,t,k(x,0,y))))\\&=h(x,0,k(x,0,y))\\&=y\in f_0(x). \qedhere
\end{align*}
\end{proof}

\subsection{Overview of the paper}

This paper is organised as follows. In section \ref{sec:def}, we define coincidence classes, a coincidence index and the Nielsen number $N(f,g)$. Rather than defining coincidence classes using paths as described above, we define them using lifts as in classical Nielsen theory, and then observe that this is indeed equivalent to the definition using paths (for the non-empty classes). Along the way, we also establish a link with a generalised notion of Reidemeister classes, similar to \cite{charlotte1}.

In order to define an index, we restrict to closed orientable manifolds of equal dimension. We do not use the same approach as Schirmer, but instead we define the index in terms of the single-valued indices of lifts of $f$ and $g$ to a finite covering space for which the lift of $f$ splits. This idea was already exploited by Brown and Gon\c{c}alves \cite{browngoncalves2023} in the setting of roots (although that paper also contains a mistake, which we address in section \ref{sec:roots}).

In section \ref{sec:axioms}, we show two classical results. The first one is the Wecken property: if $X$ and $Y$ are manifolds of dimension at least $3$, the Nielsen number satisfies $N(f,g)=\MC[f,g]$. The second result is that the coincidence index is uniquely determined by three natural axioms, as in the fixed point case \cite{staecker2007}. From our proof follows an equivalent definition of the index which is the natural generalisation of Schirmer's definition. Throughout this third section, we use simplicial approximation results due to Schirmer, for which we must restrict to triangulable manifolds.

Sections \ref{sec:av} and \ref{sec:inm} are concerned with the computation of the Nielsen number. In section \ref{sec:av}, we show that the Nielsen number can be expressed in terms of those of the single-valued lifts to the finite covering space mentioned above. In the setting of infra-nilmanifolds, where explicit formulas for single-valued Nielsen coincidence numbers exist, we can use this to give explict formulas for the Nielsen numbers of all $(n,1)$-valued pairs; this is the content of section \ref{sec:inm}. 

Lastly, in section \ref{sec:roots}, we consider the special case of roots. A theory of Nielsen and Reidemeister classes for roots of $n$-valued maps is given in the paper \cite{browngoncalves2023} mentioned above. We point out and correct a mistake in that paper, and show how the resulting theory can be viewed as a special case of ours.

\vspace{-\bigskipamount}

\section{Definitions}\label{sec:def}

Let $X$ and $Y$ be closed orientable triangulable manifolds of equal dimension. Let $f:X\multimap Y$ be an $n$-valued map and $g:X\to Y$ a single-valued map.

In order to define coincidence classes using lifts, let $p:\tilde{X}\to X$ and $q:\tilde{Y}\to Y$ be the universal covering spaces of $X$ and $Y$, with respective covering groups $\Pi$ and $\Delta$. 
Any single-valued map $g:X\to Y$ admits a lift $\tilde{g}:\tilde{X}\to \tilde{Y}$ such that \[
\begin{tikzcd}
\tilde{X} \ar[r,"\tilde{g}"] \ar[d,"p"'] & \tilde{Y} \ar[d,"q"] \\
X \ar[r,"g"] & Y
\end{tikzcd}
\]
commutes. Each such lift $\tilde{g}$ induces a morphism $\psi:\Pi\to \Delta$ of the covering groups defined by \[
\forall \gamma\in \Pi: \tilde{g}\gamma=\psi(\gamma)\tilde{g}.
\]

Lifts and covering group morphisms for $n$-valued maps can be defined as in \cite{charlotte1}, using the equivalent and more practical definition for $n$-valued maps given in \cite{browngoncalves}: an $n$-valued multifunction $f:X\multimap Y$ is continuous if and only if the corresponding single-valued function \[
X\to D_n(Y):x\mapsto f(x)
\]
is continuous, where \[
D_n(Y)=\{\{y_1,\ldots,y_n\} \subseteq Y\mid y_i\neq y_j \text{ if } i\neq j \}
\]
is the unordered configuration space, topologised as the quotient of the ordered configuration space \[
F_n(Y)=\{(y_1,\ldots,y_n)\in Y^n \mid y_i\neq y_j \text{ if } i\neq j \}
\]
under the action of the symmetric group $\Sigma_n$. Thus, an $n$-valued map $f:X\multimap Y$ can be viewed as a continuous function $X\to D_n(Y)$, which is denoted by $f$ as well.

A covering space for $D_n(Y)$ that is very useful to study $n$-valued maps is the \emph{orbit configuration space} \[
F_n(\tilde{Y},\Delta)=\{(\tilde{y}_1,\ldots,\tilde{y}_n)\in \tilde{Y}^n \mid \Delta\tilde{y}_i\neq \Delta\tilde{y}_j \text{ if } i\neq j \}
\]
with covering map \[
q^n:F_n(\tilde{Y},\Delta)\to D_n(Y):(\tilde{y}_1,\ldots,\tilde{y}_n)\mapsto \{q(\tilde{y}_1),\ldots,q(\tilde{y}_n) \}.
\]
The group of covering transformations is $\Delta^n\rtimes \Sigma_n$, where the group operation is given by \[
(\alpha_1,\ldots,\alpha_n;\sigma)(\beta_1,\ldots,\beta_n;\tau)=(\alpha_1\beta_{\sigma^{-1}(1)},\ldots,\alpha_n\beta_{\sigma^{-1}(n)};\sigma\tau)
\]
and where $(\alpha_1,\ldots,\alpha_n;\sigma)\in\Delta^n\rtimes \Sigma_n$ acts on $(\tilde{y}_1,\ldots,\tilde{y}_n)\in F_n(\tilde{Y},\Delta)$ by \[
(\alpha_1,\ldots,\alpha_n;\sigma)(\tilde{y}_1,\ldots,\tilde{y}_n)=(\alpha_1\tilde{y}_{\sigma^{-1}(1)},\ldots,\alpha_n\tilde{y}_{\sigma^{-1}(n)}).
\]

Any $n$-valued map $f:X\to D_n(Y)$ admits a lift $\tilde{f}:\tilde{X}\to F_n(\tilde{Y},\Delta)$ such that \[
\begin{tikzcd}
\tilde{X} \ar[r,"\tilde{f}"] \ar[d,"p"'] & F_n(\tilde{Y},\Delta) \ar[d,"q^n"] \\
X \ar[r,"f"] & D_n(Y)
\end{tikzcd}
\]
commutes. Since $F_n(\tilde{Y},\Delta)$ is a subspace of $\tilde{Y}^n$, such a lift decomposes as $\tilde{f}=(\tilde{f}_1,\ldots,\tilde{f}_n)$ for single-valued maps $\tilde{f}_i:\tilde{X}\to\tilde{Y}$, called the \emph{lift factors} of $f$. Every lift $\tilde{f}$ induces a morphism $\varphi:\Pi\to \Delta^n\rtimes \Sigma_n$ of the covering groups defined by \[
\forall \gamma\in \Pi:\tilde{f}\gamma=\varphi(\gamma)\tilde{f}.
\]
If we write $\varphi=(\varphi_1,\ldots,\varphi_n;\sigma)$ with $\varphi_i:\Pi\to \Delta$ and $\sigma:\Pi\to \Sigma_n$, this is equivalent to \[
(\tilde{f}_1\gamma,\ldots,\tilde{f}_n\gamma)=(\varphi_1(\gamma)\tilde{f}_{\sigma_\gamma^{-1}(1)},\ldots,\varphi_n(\gamma)\tilde{f}_{\sigma_\gamma^{-1}(n)})
\]
where we write $\sigma_\gamma=\sigma(\gamma)\in \Sigma_n$. The map $\sigma:\Pi\to \Sigma_n$ is a group morphism; the maps $\varphi_i:\Pi\to \Delta$ in general are not, but their restrictions to the subgroups $S_i=\{\gamma\in \Pi\mid \sigma_\gamma(i)=i \}$ are.

\subsection{Coincidence classes}\label{subsec:cc}

Fix maps $f:X\to D_n(Y)$ and $g:X\to Y$.

\begin{df}
Let $\tilde{f}=(\tilde{f}_1,\ldots,\tilde{f}_n)$ be a lift of $f$ and $\tilde{g}$ a lift of $g$: 
\[
\begin{tikzcd}
\tilde{X} \ar[r,"\tilde{f}"] \ar[d,"p"'] & F_n(\tilde{Y},\Delta) \ar[d,"q^n"] & \tilde{X} \ar[r,"\tilde{g}"] \ar[d,"p"'] & \tilde{Y} \ar[d,"q"] \\
X \ar[r,"f"] & D_n(Y) & X \ar[r,"g"] & Y.
\end{tikzcd}
\]
For all $i$, we call $(\tilde{f}_i,\tilde{g})$ a \emph{lifting pair} of $(f,g)$.
\end{df}

Note that \[
\Coin(f,g)=\bigcup_{(\tilde{f}_i,\tilde{g})} p\Coin(\tilde{f}_i,\tilde{g})
\]
where the union runs over all lifting pairs of $(f,g)$.

\begin{lem}\label{lem:unique-lift}
If $(\tilde{f}_i,\tilde{g})$ and $(\tilde{f}'_j,\tilde{g}')$ are two lifting pairs of $(f,g)$ such that $\tilde{f}_i(\tilde{x})=\tilde{f}'_j(\tilde{x})$ and $\tilde{g}(\tilde{x})=\tilde{g}'(\tilde{x})$ for some $\tilde{x}\in \tilde{X}$, then $(\tilde{f}_i,\tilde{g})=(\tilde{f}'_j,\tilde{g}')$.
\end{lem}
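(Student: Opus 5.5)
The idea is to use the standard uniqueness of lifts, applied separately to the single-valued part $\tilde{g}$ and to the $n$-valued part, where the latter is lifted through the covering $q^n$ rather than factor by factor.

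\emph{The $g$-part.} First I would recall that any two lifts of $g$ differ by a covering transformation. Concretely, $\tilde{g}'=\delta\tilde{g}$ for some $\delta\in\Delta$, because $q\tilde{g}=q\tilde{g}'$ and $\tilde{X}$ is connected. Since $\tilde{g}(\tilde{x})=\tilde{g}'(\tilde{x})$ and $\Delta$ acts freely on $\tilde{Y}$, we get $\delta=1$. Hence $\tilde{g}=\tilde{g}'$.

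\emph{The $f$-part.} Similarly, $\tilde{f}$ and $\tilde{f}'$ are both lifts of $f$ through the covering $q^n$. So $\tilde{f}'=(\alpha_1,\ldots,\alpha_n;\tau)\tilde{f}$ for some element of $\Delta^n\rtimes\Sigma_n$, which means $\tilde{f}'_k=\alpha_k\tilde{f}_{\tau^{-1}(k)}$ for all $k$. Here one uses that $\tilde{X}$ is connected and that $\Delta^n\rtimes\Sigma_n$ acts transitively on the fibres of $q^n$. Evaluating at $\tilde{x}$ gives
\[
\tilde{f}_i(\tilde{x})=\tilde{f}'_j(\tilde{x})=\alpha_j\tilde{f}_{\tau^{-1}(j)}(\tilde{x}).
\]

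\emph{Main step.} The key point, and the only real obstacle, is to show that $\tau^{-1}(j)=i$. Set $l=\tau^{-1}(j)$. The displayed equality says $\Delta\tilde{f}_i(\tilde{x})=\Delta\tilde{f}_l(\tilde{x})$. Since $\tilde{f}(\tilde{x})\in F_n(\tilde{Y},\Delta)$, distinct coordinates lie in distinct $\Delta$-orbits, so $l=i$. Then $\tilde{f}_i(\tilde{x})=\alpha_j\tilde{f}_i(\tilde{x})$, and freeness of the $\Delta$-action gives $\alpha_j=1$. Therefore $\tilde{f}'_j=\alpha_j\tilde{f}_{\tau^{-1}(j)}=\tilde{f}_i$ as maps on all of $\tilde{X}$.

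Combined with $\tilde{g}=\tilde{g}'$, this gives $(\tilde{f}_i,\tilde{g})=(\tilde{f}'_j,\tilde{g}')$.
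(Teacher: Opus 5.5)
Your proof is correct and follows the paper's argument essentially step for step. Uniqueness of lifts gives $\tilde{g}=\tilde{g}'$, and writing $\tilde{f}'$ as a deck translate of $\tilde{f}$ gives $\tilde{f}'_j=\delta\tilde{f}_k$; the orbit-configuration condition then forces $k=i$, and freeness of the $\Delta$-action forces $\delta=1$.
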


\begin{proof}
It follows from standard covering space theory that $\tilde{g}=\tilde{g}'$. We also know $\tilde{f}'=\alpha\tilde{f}$ for some $\alpha\in \Delta^n\rtimes \S_n$; in particular, $\tilde{f}'_j=\delta\tilde{f}_k$ for some $\delta\in \Delta$ and $k\in \{1,\ldots,n\}$. Then we have \[
q(\tilde{f}_k(\tilde{x}))=q(\delta\tilde{f}_k(\tilde{x}))=q(\tilde{f}'_j(\tilde{x}))=q(\tilde{f}_i(\tilde{x})),
\]
from which it follows that $k=i$. But then $\delta$ is the unique covering translation such that $\delta\tilde{f}_i(\tilde{x})=\tilde{f}_i(\tilde{x})$, which means $\delta=1$.
\end{proof}

\begin{lem}
For all $\gamma\in \Pi$ and $\delta\in \Delta$, if $(\tilde{f}_i,\tilde{g})$ is a lifting pair of $(f,g)$, then so is $(\delta\tilde{f}_i\gamma^{-1},\delta\tilde{g}\gamma^{-1})$.
\end{lem}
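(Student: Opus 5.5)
The strategy is to exhibit explicit lifts of $f$ and $g$ whose factors are exactly $\delta\tilde{f}_i\gamma^{-1}$ and $\delta\tilde{g}\gamma^{-1}$. By definition, a lifting pair is a factor of a lift of $f$ together with a lift of $g$, so this suffices.

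For $g$, the map $\delta\tilde{g}\gamma^{-1}:\tilde{X}\to\tilde{Y}$ is continuous. Since $p\gamma^{-1}=p$ and $q\delta=q$, we get
\[
q\,\delta\tilde{g}\gamma^{-1}=q\tilde{g}\gamma^{-1}=gp\gamma^{-1}=gp,
\]
so it is a lift of $g$.

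For $f$, let $\tilde{f}=(\tilde{f}_1,\ldots,\tilde{f}_n)$ be a lift of $f$ having $\tilde{f}_i$ as a factor. Consider the element $\alpha=(\delta,\ldots,\delta;\id)\in\Delta^n\rtimes\S_n$; it is a covering transformation of $q^n$. Set $\tilde{f}'=\alpha\tilde{f}\gamma^{-1}$. This map takes values in $F_n(\tilde{Y},\Delta)$, because $\alpha$ preserves that space. By the action formula, it decomposes as
\[
\tilde{f}'=(\delta\tilde{f}_1\gamma^{-1},\ldots,\delta\tilde{f}_n\gamma^{-1}).
\]
It is a lift of $f$, since
\[
q^n\tilde{f}'=q^n\tilde{f}\gamma^{-1}=fp\gamma^{-1}=fp.
\]
Its $i$-th factor is $\delta\tilde{f}_i\gamma^{-1}$. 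Hence $(\delta\tilde{f}_i\gamma^{-1},\delta\tilde{g}\gamma^{-1})$ is a lifting pair of $(f,g)$.

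No step is a genuine obstacle. The only point needing care is that the same $\delta$ must be applied to all $n$ components of $\tilde{f}$, so that the diagonal element $(\delta,\ldots,\delta;\id)$ is a legitimate element of $\Delta^n\rtimes\S_n$ and no permutation of the factors is introduced.
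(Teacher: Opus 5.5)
Your proposal is correct and is essentially the paper's own proof. The paper also observes that $\delta\tilde{g}\gamma^{-1}$ is a lift of $g$ and that $(\delta,\ldots,\delta;1)\tilde{f}\gamma^{-1}$ is a lift of $f$ having $\delta\tilde{f}_i\gamma^{-1}$ as a factor; you simply write out the covering-space verifications that the paper leaves implicit.
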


\begin{proof}
It follows from standard covering space theory that $\delta\tilde{g}\gamma^{-1}$ is a lift of $g$ and $(\delta,\ldots,\delta;1)\tilde{f}\gamma^{-1}$ is a lift of $f$; $\delta\tilde{f}_i\gamma^{-1}$ is a lift-factor of the latter.
\end{proof}

\begin{df}
Define an equivalence relation on the set of lifting pairs: \[
(\tilde{f}_i,\tilde{g}) \sim (\tilde{f}'_j,\tilde{g}') \iff \exists \gamma\in \Pi,\delta\in \Delta: (\tilde{f}'_j,\tilde{g}')=(\delta\tilde{f}_i\gamma^{-1},\delta\tilde{g}\gamma^{-1}).
\]
\end{df}

\begin{prop}\label{prop:cc-partition}
Let $(\tilde{f}_i,\tilde{g})$ and $(\tilde{f}'_j,\tilde{g}')$ be lifting pairs of $(f,g)$.
\begin{itemize}
\item[\rm(i)] If $(\tilde{f}_i,\tilde{g})\sim (\tilde{f}'_j,\tilde{g}')$, then $p\Coin(\tilde{f}_i,\tilde{g})=p\Coin(\tilde{f}'_j,\tilde{g}')$;
\item[\rm(ii)] If $(\tilde{f}_i,\tilde{g})\not\sim (\tilde{f}'_j,\tilde{g}')$, then $p\Coin(\tilde{f}_i,\tilde{g})\cap p\Coin(\tilde{f}'_j,\tilde{g}')=\varnothing$.
\end{itemize}
\end{prop}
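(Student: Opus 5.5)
For (i) I will compute directly. Suppose $(\tilde{f}'_j,\tilde{g}')=(\delta\tilde{f}_i\gamma^{-1},\delta\tilde{g}\gamma^{-1})$ for some $\gamma\in\Pi$ and $\delta\in\Delta$. Since $\delta$ acts injectively on $\tilde{Y}$,
\[
\tilde{x}\in\Coin(\tilde{f}'_j,\tilde{g}')
\iff \delta\tilde{f}_i(\gamma^{-1}\tilde{x})=\delta\tilde{g}(\gamma^{-1}\tilde{x})
\iff \gamma^{-1}\tilde{x}\in\Coin(\tilde{f}_i,\tilde{g}).
\]
Hence $\Coin(\tilde{f}'_j,\tilde{g}')=\gamma\Coin(\tilde{f}_i,\tilde{g})$. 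Because $p\gamma=p$, the two sets have the same image under $p$.

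For (ii) I will prove the contrapositive: a common point forces the two pairs to be equivalent. Suppose $x\in p\Coin(\tilde{f}_i,\tilde{g})\cap p\Coin(\tilde{f}'_j,\tilde{g}')$. Pick $\tilde{x}\in\Coin(\tilde{f}_i,\tilde{g})$ and $\tilde{x}'\in\Coin(\tilde{f}'_j,\tilde{g}')$ with $p(\tilde{x})=p(\tilde{x}')=x$. There is then some $\gamma\in\Pi$ with $\tilde{x}'=\gamma\tilde{x}$.

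Next I choose $\delta$. Both $\tilde{g}(\tilde{x})$ and $\tilde{g}'(\tilde{x}')$ lie over $g(x)$, so there is a unique $\delta\in\Delta$ with $\delta\tilde{g}(\tilde{x})=\tilde{g}'(\tilde{x}')$. By the preceding lemma, $(\delta\tilde{f}_i\gamma^{-1},\delta\tilde{g}\gamma^{-1})$ is a lifting pair, and it is equivalent to $(\tilde{f}_i,\tilde{g})$.

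Now I evaluate everything at $\tilde{x}'$. For the $g$-component, $\delta\tilde{g}\gamma^{-1}(\tilde{x}')=\delta\tilde{g}(\tilde{x})=\tilde{g}'(\tilde{x}')$. For the $f$-component, since $\tilde{x}$ is a coincidence of $(\tilde{f}_i,\tilde{g})$,
\[
\delta\tilde{f}_i\gamma^{-1}(\tilde{x}')=\delta\tilde{f}_i(\tilde{x})=\delta\tilde{g}(\tilde{x})=\tilde{g}'(\tilde{x}')=\tilde{f}'_j(\tilde{x}'),
\]
where the last step uses $\tilde{x}'\in\Coin(\tilde{f}'_j,\tilde{g}')$. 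So the lifting pairs $(\delta\tilde{f}_i\gamma^{-1},\delta\tilde{g}\gamma^{-1})$ and $(\tilde{f}'_j,\tilde{g}')$ agree at the single point $\tilde{x}'$ in both components. Lemma~\ref{lem:unique-lift} then says they are equal. Therefore $(\tilde{f}_i,\tilde{g})\sim(\tilde{f}'_j,\tilde{g}')$, which proves (ii).

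The only step that needs care is arranging agreement of both components at one point simultaneously. The coincidence condition is exactly what makes a single $\delta$, chosen to match the $g$-lifts, also match the $f$-factors. Lemma~\ref{lem:unique-lift} already handles the subtle issue of factor indices and the permutation part.
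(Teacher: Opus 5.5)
Your proof is correct and follows the paper's argument essentially step for step. For (i), you translate coincidence points by $\gamma$ (obtaining the set equality $\Coin(\tilde{f}'_j,\tilde{g}')=\gamma\Coin(\tilde{f}_i,\tilde{g})$ directly, where the paper proves one inclusion and invokes symmetry); for (ii), you choose $\delta$ to match the $g$-lifts at $\tilde{x}'=\gamma\tilde{x}$ and apply Lemma~\ref{lem:unique-lift}, and you explicitly check that $(\delta\tilde{f}_i\gamma^{-1},\delta\tilde{g}\gamma^{-1})$ is a lifting pair, which that lemma requires and which the paper leaves implicit.
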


\begin{proof}
For (i), suppose $(\tilde{f}'_j,\tilde{g}')=(\delta\tilde{f}_i\gamma^{-1},\delta\tilde{g}\gamma^{-1})$, and take $x\in p\Coin(\tilde{f}_i,\tilde{g})$; we will show that $x\in p\Coin(\tilde{f}'_j,\tilde{g}')$. The other inclusion follows by symmetry.

Take $\tilde{x}\in \Coin(\tilde{f}_i,\tilde{g})$ with $p(\tilde{x})=x$. Then \[
\tilde{f}'_j(\gamma\tilde{x})=\delta\tilde{f}_i(\tilde{x})=\delta\tilde{g}(\tilde{x})=\tilde{g}'(\gamma\tilde{x}).
\]
So $\gamma\tilde{x}\in \Coin(\tilde{f}'_j,\tilde{g}')$, and $p(\gamma\tilde{x})=p(\tilde{x})=x$, which means $x\in p\Coin(\tilde{f}'_j,\tilde{g}')$.

For (ii), suppose $x\in p\Coin(\tilde{f}_i,\tilde{g})\cap p\Coin(\tilde{f}'_j,\tilde{g}')$. Take $\tilde{x}\in \Coin(\tilde{f}_i,\tilde{g})$ and $\tilde{x}'\in \Coin(\tilde{f}'_j,\tilde{g}')$ with $p(\tilde{x})=p(\tilde{x}')=x$. Then \[
q(\tilde{g}(\tilde{x})) = g(p(\tilde{x})) = g(p(\tilde{x}')) = q(\tilde{g}'(\tilde{x}'))
\]
so $\tilde{g}'(\tilde{x}')=\delta\tilde{g}(\tilde{x})$ for some $\delta\in \Delta$. Since $\tilde{f}_i(\tilde{x})=\tilde{g}(\tilde{x})$ and $\tilde{f}'_j(\tilde{x}')=\tilde{g}'(\tilde{x}')$, also $\tilde{f}'_j(\tilde{x}')=\delta \tilde{f}_i(\tilde{x})$. Since $p(\tilde{x})=p(\tilde{x}')$, there is a $\gamma\in \Pi$ such that $\tilde{x}'=\gamma\tilde{x}$, so we get $\tilde{g}'(\tilde{x}')=\delta\tilde{g}\gamma^{-1}(\tilde{x}')$ and $\tilde{f}'_j(\tilde{x}')=\delta \tilde{f}_i\gamma^{-1}(\tilde{x}')$. By Lemma \ref{lem:unique-lift} it follows that $(\tilde{f}'_j,\tilde{g}')=(\delta\tilde{f}_i\gamma^{-1},\delta\tilde{g}\gamma^{-1})$.
\end{proof}

Thus, we have a partition (where some of the sets may be empty) \[
\Coin(f,g)= \bigsqcup_{[(\tilde{f}_i,\tilde{g})]} p\Coin(\tilde{f}_i,\tilde{g})
\]
where $[(\tilde{f}_i,\tilde{g})]$ denote the equivalence classes for the relation $\sim$.

\begin{df}
We call the classes $[(\tilde{f}_i,\tilde{g})]$ the \emph{lifting classes}, and the corresponding sets $p\Coin(\tilde{f}_i,\tilde{g})$ the \emph{coincidence classes} of the pair $(f,g)$.
\end{df}

Now fix a lift $\tilde{f}:\tilde{X}\to F_n(\tilde{Y},\Delta)$ for $f$ and a lift $\tilde{g}:\tilde{X}\to \tilde{Y}$ for $g$.

\begin{prop}
For any lifting pair $(\tilde{f}'_j,\tilde{g}')$, there exist $i\in \{1,\ldots,n\}$ and $\alpha\in \Delta$ such that $(\tilde{f}'_j,\tilde{g}')\sim (\alpha\tilde{f}_i,\tilde{g})$.
\end{prop}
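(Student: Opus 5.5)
The idea is to use standard covering space theory for both lifts: any other lift of $g$ differs from $\tilde{g}$ by a covering transformation of $\tilde{Y}$, and any other lift of $f$ differs from $\tilde{f}$ by a covering transformation of $F_n(\tilde{Y},\Delta)$. We then absorb the transformation on the $g$-side into the relation $\sim$, with $\gamma=1$.

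First, since $\tilde{g}'$ and $\tilde{g}$ are both lifts of $g$ through the universal covering $q$, there is a unique $\delta\in\Delta$ with $\tilde{g}'=\delta\tilde{g}$. Likewise, $\tilde{f}'$ and $\tilde{f}$ are both lifts of $f$ through the covering $q^n$, and $\tilde{X}$ is connected and simply connected. Hence the two lifts differ by a covering transformation: $\tilde{f}'=(\beta_1,\ldots,\beta_n;\tau)\tilde{f}$ for some element of $\Delta^n\rtimes\Sigma_n$. We use here that $q^n$ is a covering whose deck group $\Delta^n\rtimes\Sigma_n$ acts transitively on fibres, which is the same fact already invoked in the proof of Lemma~\ref{lem:unique-lift}.

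Reading off the $j$-th component via the action formula, we get $\tilde{f}'_j=\beta_j\tilde{f}_{\tau^{-1}(j)}$. Set $i=\tau^{-1}(j)$, so that $\tilde{f}'_j=\beta_j\tilde{f}_i$.

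Now apply the definition of $\sim$ with $\gamma=1$ and the deck element $\delta^{-1}$. This gives
\[
(\tilde{f}'_j,\tilde{g}')\sim(\delta^{-1}\tilde{f}'_j,\delta^{-1}\tilde{g}')=(\delta^{-1}\beta_j\tilde{f}_i,\tilde{g}).
\]
So the statement holds with $\alpha=\delta^{-1}\beta_j\in\Delta$ and this $i$. Note that $(\alpha\tilde{f}_i,\tilde{g})$ is indeed a lifting pair: $\alpha\tilde{f}_i$ is a lift factor of $(\alpha,\ldots,\alpha;1)\tilde{f}$, as in the preceding lemma.

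The only step needing care is the transitivity of the deck group of $q^n$ on fibres. Concretely, we must check that any two points of $F_n(\tilde{Y},\Delta)$ over the same unordered configuration differ by an element of $\Delta^n\rtimes\Sigma_n$: a permutation matches up the $\Delta$-orbits, and then elements of $\Delta$ match the individual points. After that, uniqueness of lifts agreeing at one point, over the connected space $\tilde{X}$, finishes the argument.
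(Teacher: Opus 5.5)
Your proof is correct and matches the paper's: both write $\tilde{f}'_j=\beta_j\tilde{f}_i$ using the deck group $\Delta^n\rtimes\Sigma_n$ and $\tilde{g}'=\delta\tilde{g}$ by standard covering theory, then take $\alpha=\delta^{-1}\beta_j$ with $\gamma=1$ in the definition of $\sim$. The check that the deck group acts transitively on the fibres of $q^n$ is simply a detail the paper leaves implicit.
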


\begin{proof}
As in the proof of Lemma \ref{lem:unique-lift}, we can write $\tilde{f}'_j=\delta\tilde{f}_i$ for some $\delta\in \Delta$ and $i\in \{1,\ldots,n\}$. By standard covering space theory, we can write $\tilde{g}'=\delta'\tilde{g}$ for some $\delta'\in \Delta$. For $\alpha=(\delta')^{-1}\delta$, we have $(\tilde{f}'_j,\tilde{g}')\sim (\alpha\tilde{f}_i,\tilde{g})$.
\end{proof}

Let $\varphi=(\varphi_1,\ldots,\varphi_n;\sigma):\Pi\to \Delta^n\rtimes \S_n$ and $\psi:\Pi\to \Delta$ be the covering group morphisms induced by $\tilde{f}$ and $\tilde{g}$, so that for all $\gamma\in \Pi$ \[
\tilde{f}\gamma=\varphi(\gamma)\tilde{f}=(\varphi_1(\gamma)\tilde{f}_{\sigma_\gamma^{-1}(1)},\ldots,\varphi_n(\gamma)\tilde{f}_{\sigma_\gamma^{-1}(n)}) \quad \text{and} \quad \tilde{g}\gamma=\psi(\gamma)\tilde{g}.
\]
These morphisms define an equivalence relation on $\Delta\times \{1,\ldots,n\}$ by \[
(\alpha,i)\sim_{\varphi,\psi} (\beta,j) \iff \exists \gamma\in \Pi:i=\sigma_\gamma(j),\; \alpha=\psi(\gamma)\beta\varphi_j(\gamma^{-1}).
\]
The set of equivalence classes $\Delta\times \{1,\ldots,n\}/{\sim}_{\varphi,\psi}$ will be denoted by $\R[\varphi,\psi]$.

\begin{prop}\label{prop:cc-equiv-R}
For $\alpha,\beta\in \Delta$ and $i,j\in \{1,\ldots,n\}$, we have $(\alpha\tilde{f}_i,\tilde{g}) \sim (\beta\tilde{f}_j,\tilde{g})$ if and only if $(\alpha,i)\sim_{\varphi,\psi} (\beta,j)$.
\end{prop}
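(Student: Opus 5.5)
We unwind both relations and compare them directly. By definition, $(\alpha\tilde{f}_i,\tilde{g})\sim(\beta\tilde{f}_j,\tilde{g})$ means that there exist $\gamma\in\Pi$ and $\delta\in\Delta$ with
\[
\beta\tilde{f}_j=\delta\alpha\tilde{f}_i\gamma^{-1}\quad\text{and}\quad \tilde{g}=\delta\tilde{g}\gamma^{-1}.
\]
The key tool is the pair of identities $\tilde{g}\gamma=\psi(\gamma)\tilde{g}$ and $\tilde{f}_k\gamma=\varphi_k(\gamma)\tilde{f}_{\sigma_\gamma^{-1}(k)}$. We combine them with the uniqueness statements from covering space theory: a lift is determined by its value at a single point, and $\Delta$ acts freely. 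A further ingredient is that the lift factors $\tilde{f}_k$ project to distinct points of $Y$ at each $\tilde{x}$. This last fact gives: if $\epsilon\tilde{f}_k=\epsilon'\tilde{f}_l$ for $\epsilon,\epsilon'\in\Delta$, then $k=l$ and $\epsilon=\epsilon'$, exactly as in the proof of Lemma \ref{lem:unique-lift}.

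For ($\Rightarrow$), the identity $\tilde{g}=\delta\tilde{g}\gamma^{-1}$ gives $\tilde{g}\gamma=\delta\tilde{g}$. Comparing with $\tilde{g}\gamma=\psi(\gamma)\tilde{g}$ and using freeness of the $\Delta$-action, we get $\delta=\psi(\gamma)$. Next we rewrite $\tilde{f}_i\gamma^{-1}$ using the identity for $\gamma^{-1}$:
\[
\tilde{f}_i\gamma^{-1}=\varphi_i(\gamma^{-1})\tilde{f}_{\sigma_{\gamma}(i)},
\]
since $\sigma_{\gamma^{-1}}^{-1}=\sigma_\gamma$. Hence $\beta\tilde{f}_j=\psi(\gamma)\alpha\varphi_i(\gamma^{-1})\tilde{f}_{\sigma_\gamma(i)}$. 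The uniqueness fact then yields $j=\sigma_\gamma(i)$ and $\beta=\psi(\gamma)\alpha\varphi_i(\gamma^{-1})$. This is precisely $(\beta,j)\sim_{\varphi,\psi}(\alpha,i)$, witnessed by $\gamma$. Since $\sim_{\varphi,\psi}$ is stated to be an equivalence relation, and in particular symmetric (the witness $\gamma^{-1}$ works, using the cocycle identity), we conclude $(\alpha,i)\sim_{\varphi,\psi}(\beta,j)$. Alternatively, we can apply the symmetric roles from the start and avoid the symmetry step entirely.

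For ($\Leftarrow$), we run the computation backwards. Given $\gamma$ with $i=\sigma_\gamma(j)$ and $\alpha=\psi(\gamma)\beta\varphi_j(\gamma^{-1})$, we set $\delta=\psi(\gamma)$. Then $\delta\tilde{g}\gamma^{-1}=\tilde{g}$, and
\[
\delta\beta\tilde{f}_j\gamma^{-1}=\psi(\gamma)\beta\varphi_j(\gamma^{-1})\tilde{f}_{\sigma_\gamma(j)}=\alpha\tilde{f}_i,
\]
so $(\beta\tilde{f}_j,\tilde{g})\sim(\alpha\tilde{f}_i,\tilde{g})$, and symmetry of $\sim$ finishes the argument.

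The only delicate point is bookkeeping. We need the correct formula for $\tilde{f}_i\gamma^{-1}$, which requires $\sigma_{\gamma^{-1}}^{-1}(i)=\sigma_\gamma(i)$ because $\sigma$ is a homomorphism. We also need to keep straight which of $\alpha,\beta$ appears on which side, matching the convention $\alpha=\psi(\gamma)\beta\varphi_j(\gamma^{-1})$. Arranging the direction of the witnessing $\gamma$ to match this convention directly lets us bypass any appeal to symmetry.
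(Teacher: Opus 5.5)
Your proposal is correct and follows essentially the same route as the paper: use $\tilde{g}\gamma=\psi(\gamma)\tilde{g}$ and freeness to get $\delta=\psi(\gamma)$, then expand $\tilde{f}_k\gamma^{-1}=\varphi_k(\gamma^{-1})\tilde{f}_{\sigma_\gamma(k)}$ and compare lift factors; the converse is the same computation run backwards. The only difference is that the paper starts directly from $(\alpha\tilde{f}_i,\tilde{g})=(\delta\beta\tilde{f}_j\gamma^{-1},\delta\tilde{g}\gamma^{-1})$, implicitly using symmetry of $\sim$. That is your ``alternative'' ordering, whereas your main version makes the symmetry step explicit and correctly verifies it via the cocycle identity $\varphi_i(\gamma^{-1})\varphi_{\sigma_\gamma(i)}(\gamma)=1$.
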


\begin{proof}
Suppose $(\alpha\tilde{f}_i,\tilde{g})=(\delta\beta\tilde{f}_j\gamma^{-1},\delta\tilde{g}\gamma^{-1})$. From the equality $\tilde{g}=\delta\tilde{g}\gamma^{-1}$ it follows that $\delta\tilde{g}=\tilde{g}\gamma=\psi(\gamma)\tilde{g}$, and hence $\delta=\psi(\gamma)$. Then the equality for the lift-factors of $f$ becomes \[
\alpha\tilde{f}_i=\psi(\gamma)\beta\tilde{f}_j\gamma^{-1}=\psi(\gamma)\beta\varphi_j(\gamma^{-1})\tilde{f}_{\sigma_{\gamma^{-1}}^{-1}(j)}
\]
from which it follows that $\alpha=\psi(\gamma)\beta\varphi_j(\gamma^{-1})$ and $i=\sigma_{\gamma^{-1}}^{-1}(j)=\sigma_\gamma(j)$.

Conversely, if $i=\sigma_\gamma(j)$ and $\alpha=\psi(\gamma)\beta\varphi_j(\gamma^{-1})$, by definition of $\varphi$ and $\psi$ we have $(\alpha\tilde{f}_i,\tilde{g})=(\psi(\gamma)\beta\tilde{f}_j\gamma^{-1},\psi(\gamma)\tilde{g}\gamma^{-1})$.
\end{proof}

Thus, with the fixed lifts $\tilde{f}$ and $\tilde{g}$, \[
\Coin(f,g)= \bigsqcup_{[(\alpha,i)]\in \R[\varphi,\psi]} p\Coin(\alpha\tilde{f}_i,\tilde{g}).
\]

\begin{rmk}\label{rmk:lift-choice}
The morphisms $\varphi$ and $\psi$ depend on the choice of lifts $\tilde{f}$ and $\tilde{g}$. Concretely, for any other lift $\tilde{g}'$ of $\tilde{g}$, there is a $\delta\in \Delta$ such that $\tilde{g}'=\delta\tilde{g}$, and for any other lift $\tilde{f}'$ of $f$, there is a $(\delta_1,\ldots,\delta_n;\eta)\in \Delta^n\rtimes \S_n$ such that \[
\tilde{f}'=(\delta_1,\ldots,\delta_n;\eta)\tilde{f}=(\delta_1\tilde{f}_{\eta^{-1}(1)},\ldots,\delta_n\tilde{f}_{\eta^{-1}(n)}).
\]
The morphism $\psi'$ induced by $\tilde{g}'$ is $\tau_\delta\psi$, where $\tau_\delta:\Delta\to\Delta:g\mapsto \delta g\delta^{-1}$ is conjugation with $\delta$; the morphism $\varphi'$ induced by $\tilde{f}'$ is given by \[
\varphi'(\gamma)=(\delta_1\varphi_{\eta^{-1}(1)}(\gamma)\delta^{-1}_{\eta\sigma_\gamma^{-1}\eta^{-1}(1)},\ldots,\delta_n\varphi_{\eta^{-1}(n)}(\gamma)\delta^{-1}_{\eta\sigma_\gamma^{-1}\eta^{-1}(n)};\eta\sigma_\gamma\eta^{-1}).
\]
The corresponding equivalence relation is given by \begin{align*}
(\alpha&,i) \sim_{\varphi',\psi'} (\beta,j) \\ &\iff \exists \gamma\in \Pi: i=\eta\sigma_\gamma\eta^{-1}(j),\; \alpha=\delta\psi(\gamma)\delta^{-1}\beta\delta_j\varphi_{\eta^{-1}(j)}(\gamma^{-1})\delta^{-1}_{\eta\sigma_\gamma\eta^{-1}(j)} \\
&\iff \exists \gamma\in \Pi: \eta^{-1}(i)=\sigma_\gamma \eta^{-1}(j),\; \delta^{-1}\alpha\delta_i=\psi(\gamma)\delta^{-1}\beta\delta_j \varphi_{\eta^{-1}(j)}(\gamma^{-1}) \\
&\iff (\delta^{-1}\alpha\delta_i,\eta^{-1}(i)) \sim_{\varphi,\psi} (\delta^{-1}\beta\delta_j,\eta^{-1}(j))
\end{align*}
(where we used $\eta\sigma_\gamma\eta^{-1}(j)=i$ in the second step). The lifting pair representing the class corresponding to $[(\alpha,i)]\in \R[\varphi',\psi']$ is \[
(\alpha\tilde{f}'_i,\tilde{g}')=(\alpha \delta_i\tilde{f}_{\eta^{-1}(i)},\delta\tilde{g}),
\]
which is equivalent to the pair $(\delta^{-1}\alpha \delta_i\tilde{f}_{\eta^{-1}(i)},\tilde{g})$, so the partition of $\Coin(f,g)$ into coincidence classes corresponding to this choice of lifts is also \begin{align*}
\bigsqcup_{[(\alpha,i)]\in \R[\varphi',\psi']} p\Coin(\alpha\tilde{f}'_i,\tilde{g}') &= \bigsqcup_{[(\delta^{-1}\alpha\delta_i,\eta^{-1}(i))]\in \R[\varphi,\psi]} p\Coin(\delta^{-1}\alpha \delta_i\tilde{f}_{\eta^{-1}(i)},\tilde{g}) \\ &= \bigsqcup_{[(\alpha,i)]\in \R[\varphi,\psi]} p\Coin(\alpha \tilde{f}_i,\tilde{g}).
\end{align*}
\end{rmk}

\begin{rmk}[Coincidence classes of homotopies]\label{rmk:cc-homotopy}
If $F:X\times I\to D_n(Y)$ and $G:X\times I\to Y$ are homotopies, then for $t\in I$ we can consider the maps $f_t:X\to D_n(Y)$ and $g_t:X\to Y$ given by $f_t(x)=F(x,t)$ and $g_t(x)=G(x,t)$. We will write $(F,G)=\{(f_t,g_t)\mid t\in I\}$.
	
Note that the universal cover of $X\times I$ is $p\times \id:\tilde{X}\times I\to X\times I$, so a lifting pair of $(F,G)$ consists of maps $\tilde{F}_i$ and $\tilde{G}$ fitting into the following diagrams: \[
\begin{tikzcd}
\tilde{X}\times I \ar[r,"{(\tilde{F}_1,\ldots,\tilde{F}_n)}"] \ar[d,"p\times \id"'] &[2.3em] F_n(\tilde{Y},\Delta) \ar[d,"q^n"] & \tilde{X}\times I \ar[r,"\tilde{G}"] \ar[d,"p\times \id"'] & \tilde{Y} \ar[d,"q"] \\
X\times I \ar[r,"F"] & D_n(Y) & X\times I \ar[r,"G"] & Y.
\end{tikzcd}
\]
For all $t$, the map $((\tilde{f}_t)_1,\ldots,(\tilde{f}_t)_n)$ defined by $(\tilde{f}_t)_i(\tilde{x})=\tilde{F}_i(\tilde{x},t)$ is a lift of $f_t$, and the map $\tilde{g}_t$ defined by $\tilde{g}_t(\tilde{x})=\tilde{G}(\tilde{x},t)$ is a lift of $g_t$.

The covering group of $p\times \id$ is also $\Pi$, with action on $\tilde{X}\times I$ given by $\gamma(\tilde{x},t)=(\gamma\tilde{x},t)$. If $\varphi=(\varphi_1,\ldots,\varphi_n;\sigma):\Pi\to \Delta^n \rtimes \S_n$ is the morphism of covering groups induced by the lift $(\tilde{F}_1,\ldots,\tilde{F}_n)$ of $F$, then \[
(\tilde{f}_t)_i(\gamma\tilde{x})=\tilde{F}_i(\gamma\tilde{x},t)=\varphi_i(\gamma)\tilde{F}_{\sigma_\gamma^{-1}(i)}(\tilde{x},t)=\varphi_i(\gamma)(\tilde{f}_t)_{\sigma_\gamma^{-1}(i)}(\tilde{x})
\]
so $\varphi$ is also the morphism induced by the lift $((\tilde{f}_t)_1,\ldots,(\tilde{f}_t)_n)$ of $f_t$, for all $t$. Similarly, if $\psi:\Pi\to \Delta$ is the morphism induced by the lift $\tilde{G}$ of $G$, this is also the morphism induced by the lift $\tilde{g}_t$ of $g_t$, for all $t$. Consequently, the set of coincidence classes of $(f_t,g_t)$ is \[
\{p\Coin(\alpha(\tilde{f}_t)_i,\tilde{g}_t) \mid [(\alpha,i)]\in \R[\varphi,\psi] \}
\]
for all $t$.
\end{rmk}

\begin{thm}\label{thm:cc-paths}
Two points $x,x'\in \Coin(f,g)$ belong to the same coincidence class if and only if there is a path $c:I\to X$ from $x$ to $x'$ such that, if $fc=\{c_1,\ldots,c_n \}$, there is an $i\in \{1,\ldots,n\}$ such that the paths $c_i$ and $gc$ are homotopic.
\end{thm}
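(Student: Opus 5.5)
The plan is to translate between paths in $X$ and lifts to the universal covers, using the fixed lifts $\tilde{f}$ and $\tilde{g}$ and the description of the classes by lifting pairs from Proposition \ref{prop:cc-partition}. The tools are unique path lifting and the fact that, since $\tilde{Y}$ is simply connected, two paths in $Y$ with common endpoints are homotopic if and only if their lifts starting at a common point end at a common point.

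($\Rightarrow$) Suppose $x,x'$ lie in the same coincidence class. By Proposition \ref{prop:cc-partition} (and the partition following it), there is a single lifting pair $(\tilde{f}_i',\tilde{g}')$ with $x,x'\in p\Coin(\tilde{f}'_i,\tilde{g}')$. Choose $\tilde{x}\in\Coin(\tilde{f}'_i,\tilde{g}')$ over $x$ and $\tilde{x}'\in\Coin(\tilde{f}'_i,\tilde{g}')$ over $x'$, and let $\tilde{c}$ be a path in $\tilde{X}$ from $\tilde{x}$ to $\tilde{x}'$. Put $c=p\tilde{c}$. Then $q^n\tilde{f}'\tilde{c}=fc$, so the paths $q\tilde{f}'_k\tilde{c}$ for $k=1,\ldots,n$ are the splitting factors of $fc$. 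The splitting is unique because the factors are pointwise distinct and $I$ is connected. So $c_i:=q\tilde{f}'_i\tilde{c}$ is a factor, and $gc=q\tilde{g}'\tilde{c}$. The paths $\tilde{f}'_i\tilde{c}$ and $\tilde{g}'\tilde{c}$ in $\tilde{Y}$ have the same endpoints, because $\tilde{x},\tilde{x}'$ are coincidences. Since $\tilde{Y}$ is simply connected, they are homotopic rel endpoints, and projecting the homotopy by $q$ gives $c_i\simeq gc$.

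($\Leftarrow$) Suppose $c$ is a path from $x$ to $x'$ with $c_i\simeq gc$. Lift $c$ to $\tilde{c}$ starting at some $\tilde{x}$ over $x$, and set $\tilde{x}'=\tilde{c}(1)$. Since $x\in\Coin(f,g)$, there is some lifting pair $(\tilde{f}'_j,\tilde{g}')$ with $\tilde{x}\in\Coin(\tilde{f}'_j,\tilde{g}')$. To get it, use the Proposition that every lifting pair is equivalent to some $(\alpha\tilde{f}_k,\tilde{g})$, or directly: since $g(x)\in f(x)$, some $\tilde{f}_k(\tilde{x})$ lies over $g(x)$, and we translate by $\delta\in\Delta$. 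Now $\tilde{g}'\tilde{c}$ is the lift of $gc$ starting at $\tilde{g}'(\tilde{x})$, and $\tilde{f}'_j\tilde{c}$ is a lift of the factor $q\tilde{f}'_j\tilde{c}$ of $fc$ starting at the same point.

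The main point to check is that this factor is $c_i$. By hypothesis, $c_i(0)=gc(0)=g(x)$. The factors of $fc$ have pairwise distinct starting points, and $q\tilde{f}'_j\tilde{c}$ starts at $q\tilde{f}'_j(\tilde{x})=g(x)$. So it must be $c_i$. Homotopy lifting for $c_i\simeq gc$ rel endpoints then gives $\tilde{f}'_j(\tilde{x}')=\tilde{g}'(\tilde{x}')$. Hence $x,x'\in p\Coin(\tilde{f}'_j,\tilde{g}')$, which is a single coincidence class.

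Everything is routine covering-space theory. The only delicate step is the identification of the factor $c_i$ with the projection of the correct lift factor. This relies on uniqueness of splittings over $I$ and on the distinctness of the $n$ values of $fc$ at time $0$.
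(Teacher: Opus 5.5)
Your proof is correct and follows essentially the same route as the paper. For ($\Rightarrow$) you join two lifted coincidence points in $\tilde{X}$ and use simple connectivity of $\tilde{Y}$; for ($\Leftarrow$) you lift $c$ from a point of a lifted coincidence set, identify $c_i$ by its starting point $g(x)$, and conclude with homotopy lifting. The only cosmetic difference is that you work with an arbitrary lifting pair $(\tilde{f}'_j,\tilde{g}')$, whereas the paper uses the normalised representative $(\alpha\tilde{f}_i,\tilde{g})$ built from the fixed lifts.
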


\begin{proof}
First, suppose two points $x$ and $x'$ belong to the same coincidence class $p\Coin(\alpha\tilde{f}_i,\tilde{g})$; say $x=p(\tilde{x})$ and $x'=p(\tilde{x}')$ with $\tilde{x},\tilde{x}'\in \Coin(\alpha\tilde{f}_i,\tilde{g})$. Let $\tilde{c}:I\to \tilde{X}$ be a path from $\tilde{x}$ to $\tilde{x}'$. Then $\alpha\tilde{f}_i\tilde{c}$ and $\tilde{g}\tilde{c}$ are paths in $\tilde{Y}$ with the same begin and endpoint. Since $\tilde{Y}$ is simply connected, they are homotopic.

Define $c=p\circ \tilde{c}$. This is a path in $X$ from $x$ to $x'$, with \[
fc=fp\tilde{c}=q\alpha\tilde{f}\tilde{c}=\{q\alpha\tilde{f}_1\tilde{c},\ldots,q\alpha\tilde{f}_n\tilde{c} \}
\]
and $gc=q\tilde{g}\tilde{c}$. Since $\alpha\tilde{f}_i\tilde{c}$ and $\tilde{g}\tilde{c}$ are homotopic paths in $\tilde{Y}$, the paths $q\alpha\tilde{f}_i\tilde{c}$ and $q\tilde{g}\tilde{c}$ are homotopic in $Y$.

Conversely, for $x,x'\in \Coin(f,g)$, suppose $c:I\to X$ is a path from $x$ to $x'$ with $fc=\{c_1,\ldots,c_n \}$, where $c_i$ is homotopic to $gc$. Let $p\Coin(\alpha\tilde{f}_i,\tilde{g})$ be the coincidence class containing $x$, and choose $\tilde{x}\in \Coin(\alpha\tilde{f}_i,\tilde{g})$ with $p(\tilde{x})=x$. There is a unique lift $\tilde{c}:I\to \tilde{X}$ of $c$ with $\tilde{c}(0)=\tilde{x}$. Define $\tilde{x}'=\tilde{c}(1)$; then $p(\tilde{x}')=x'$. 

Note that $q\alpha\tilde{f}_i\tilde{c}(t)\in fp\tilde{c}(t)=fc(t)$ for all $t\in I$, so $q\alpha\tilde{f}_i\tilde{c}$ is one of the factors of $fc$; since $q\alpha\tilde{f}_i\tilde{c}(0)=x=c_i(0)$, it must be $c_i$. Thus, $\alpha\tilde{f}_i\tilde{c}$ is the lift of $c_i$ starting at $\alpha\tilde{f}_i(\tilde{x})$. On the other hand, $\tilde{g}\tilde{c}$ is the lift of $gc$ starting at $\tilde{g}(\tilde{x})$. Since $c_i$ and $gc$ are homotopic, the endpoints of their lifts starting at $\alpha\tilde{f}_i(\tilde{x})=\tilde{g}(\tilde{x})$ must be equal. Thus, $\alpha\tilde{f}_i(\tilde{x}')=\tilde{g}(\tilde{x}')$, which means $x'$ lies in $p\Coin(\alpha\tilde{f}_i,\tilde{g})$, the same coincidence class as $x$.
\end{proof}

\begin{thm}\label{thm:cc-open-compact}
Coincidence classes are compact and open in $\Coin(f,g)$.
\end{thm}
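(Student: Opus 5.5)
Since $X$ is a closed manifold it is compact, and $Y$ is Hausdorff, so Lemma~\ref{lem:coin-closed} shows that $\Coin(f,g)$ is closed in $X$ and hence compact. By Proposition~\ref{prop:cc-partition}, $\Coin(f,g)$ is the disjoint union of the coincidence classes. So if we show that every coincidence class is open in $\Coin(f,g)$, each class is also closed in $\Coin(f,g)$, being the complement of the union of the other (open) classes. A closed subset of the compact space $\Coin(f,g)$ is compact, so openness is the only thing left to prove.

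To prove openness, I will use the path characterisation of Theorem~\ref{thm:cc-paths}. Take $x\in\Coin(f,g)$. Choose an open neighbourhood $V$ of $g(x)$ in $Y$ that is evenly covered by $q$ and contractible; a coordinate ball works, since $Y$ is a manifold. Choose disjoint neighbourhoods $V_1,\ldots,V_n$ of the $n$ points of $f(x)$. By continuity of $f$ and $g$ (viewing $f$ as a map $X\to D_n(Y)$), there is a path-connected open neighbourhood $U$ of $x$ with these properties: $g(U)\subseteq V$; $f$ splits on $U$ as $\{f_1,\ldots,f_n\}$ with $f_k(U)\subseteq V_k$; and if $g(x)=f_i(x)$, then $V_i\subseteq V$ and $f_i(U)\subseteq V$. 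For instance, take $V_i\subseteq V$ and shrink $U$ until it lies in a contractible chart of $X$, so that $f$ splits on it. Here I use the standard fact that $n$-valued maps split over simply connected spaces, or equivalently that $q^n$ is a covering and $f|_U$ lifts to $F_n(\tilde Y,\Delta)$.

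Now let $x'\in U\cap\Coin(f,g)$ and let $c$ be a path in $U$ from $x$ to $x'$. The factor of $fc$ starting at $g(x)$ is $f_ic$, which stays in $f_i(U)\subseteq V$, and $gc$ also stays in $V$. The two paths have the same initial point $g(x)$. For their endpoints, note that $g(x')\in f(x')$ and $g(x')\in V$; since $V=V_i$ can be arranged by taking $V$ small enough that the $V_k$ are disjoint and only $V_i$ meets $V$, we get $g(x')=f_i(x')$. Thus $f_ic$ and $gc$ are paths in the contractible set $V$ with the same endpoints, so they are homotopic rel endpoints. By Theorem~\ref{thm:cc-paths}, $x'$ lies in the same class as $x$. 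Hence $U\cap\Coin(f,g)$ is contained in the class of $x$, and the class is open.

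The main technical point is the careful choice of neighbourhoods. Concretely: first pick pairwise disjoint contractible balls $V_1,\ldots,V_n$ around the points of $f(x)$, with $V_i$ the ball around $g(x)$, and set $V=V_i$. Then use continuity of $g$ and of $f$ into $D_n(Y)$ to obtain a path-connected $U$ with $g(U)\subseteq V_i$ and $f(y)$ meeting each $V_k$ exactly once for every $y\in U$. This gives the splitting on $U$ directly, without any simple-connectivity argument.
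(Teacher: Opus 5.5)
Your proof is correct and is essentially the paper's argument: you get compactness from openness plus the closedness of $\Coin(f,g)$, and openness from a splitting of $f$ near $x$ into disjoint neighbourhoods $V_k$ together with Theorem~\ref{thm:cc-paths} applied to paths inside a simply connected set. The only cosmetic difference is that you take $V_i$ itself to be a contractible ball, whereas the paper picks a simply connected $W\subseteq V_i$. Your final choice $V=V_i$ cleanly resolves the somewhat muddled neighbourhood choices in your middle paragraph.
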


\begin{proof}
To show that the coincidence classes are open, take $x\in \Coin(f,g)$. We will construct an open neighborhood $U\subseteq X$ of $x$ such that $U\cap \Coin(f,g)$ is contained in the coincidence class of $x$.

Write $f(x)=\{y_1,\ldots,y_n \}$, and let $V_1,\ldots,V_n$ be disjoint open neighborhoods of the points $y_1,\ldots,y_n$. Then there is an open neighborhood of $x$ on which $f$ splits as $\{f_1,\ldots,f_n \}$, where the image of $f_i$ is contained in $V_i$ for each $i$ (see \cite[Proposition 2.2]{browngoncalves}). Now, let $i$ be such that $f_i(x)=g(x)$. Take a simply connected neighborhood $W$ of $g(x)$ contained in $V_i$. Lastly, let $U\subseteq f_i^{-1}(W)\cap g^{-1}(W)$ be a path connected neighborhood of $x$.

Consider $x'\in U\cap \Coin(f,g)$. Since $g(x')\in W\subseteq V_i$ and the only element of $f(x')=\{f_1(x'),\ldots,f_n(x') \}$ lying in $V_i$ is $f_i(x')$, we have $g(x')=f_i(x')$. Let $c$ be a path in $U$ from $x$ to $x'$. Since $g(x)=f_i(x)$ and $g(x')=f_i(x')$, the paths $gc$ and $f_ic$ in $W$ have the same begin and endpoints. Since $W$ is simply connected, they are homotopic. By Theorem \ref{thm:cc-paths}, $x$ and $x'$ lie in the same coincidence class.

Since $\Coin(f,g)$ is a disjoint union of the coincidence classes, and each coincidence class is open in $\Coin(f,g)$, each coincidence class is also closed in $\Coin(f,g)$ (since its complement, the union of the other coincidence classes, is open). Since $\Coin(f,g)$ is closed in $X$ (Lemma \ref{lem:coin-closed}), each coincidence class is closed in $X$, hence compact since $X$ is compact.
\end{proof}

\begin{cor}\label{cor:cc-finite}
There are only finitely many non-empty coincidence classes.
\end{cor}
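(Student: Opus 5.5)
The plan is a compactness argument built on Theorem \ref{thm:cc-open-compact}. By Lemma \ref{lem:coin-closed}, $\Coin(f,g)$ is a closed subset of the compact manifold $X$, so it is compact. By Proposition \ref{prop:cc-partition} and the partition displayed after it, $\Coin(f,g)$ is the disjoint union of the coincidence classes $p\Coin(\tilde{f}_i,\tilde{g})$. Theorem \ref{thm:cc-open-compact} says that each of these classes is open in $\Coin(f,g)$.

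The non-empty coincidence classes therefore form an open cover of the compact space $\Coin(f,g)$. By compactness there is a finite subcover. The classes are pairwise disjoint and each is non-empty, so dropping any one of them would leave its points uncovered. Hence every non-empty class must appear in the finite subcover, and there are only finitely many of them.

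If $\Coin(f,g)$ is empty, the statement is trivial. No step here should be a genuine obstacle: the real work, the openness of the classes, was already done in Theorem \ref{thm:cc-open-compact}. The only point needing care is the disjointness argument that forces every non-empty class into the subcover.
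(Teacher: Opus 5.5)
Your proposal is correct and follows essentially the same route as the paper: $\Coin(f,g)$ is compact, and the coincidence classes are open (Theorem \ref{thm:cc-open-compact}) and pairwise disjoint, so a finite subcover exists. You also make explicit the step the paper leaves implicit, namely that disjointness forces every non-empty class to appear in that subcover.
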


\begin{proof}
The coincidence classes form an open cover of $\Coin(f,g)$. Since $\Coin(f,g)$ is compact, this cover admits a finite subcover.
\end{proof}

\begin{rmk}\label{rmk:fix-and-1valued-cc}
In the single-valued case and in the setting of fixed points of $n$-valued maps, the definitions of coincidence classes using paths (as in the introduction) are special cases of Theorem \ref{thm:cc-paths}. Also our algebraic definition of coincidence classes reduces to the classical one in the setting of fixed points of $n$-valued maps and in the single-valued case (as defined respectively in \cite{charlotte1} and e.g.\ in \cite{goncalves-htfpt}):
	
If $X=Y$ and $g$ is the identity map on $X$, then $\tilde{g}$ is the identity map on $\tilde{X}$, the morphism $\psi:\Pi\to \Pi$ induced by $g$ is the identity morphism, and the relation $\sim_{\varphi,\psi}$ on $\Pi\times \{1,\ldots,n\}$ is given by \begin{align*}
(\alpha,i)\sim_{\varphi,\psi} (\beta,j) &\iff \exists \gamma\in \Pi: i=\sigma_\gamma(j),\; \alpha=\gamma\beta\varphi_j(\gamma^{-1}) \\
&\iff (\alpha,i)\sim_{\varphi} (\beta,j),
\end{align*}
so the coincidence classes coincide with the $n$-valued fixed point classes of $f$.

If $f:X\to Y$ is a single-valued map, its induced morphism in the $n$-valued sense is $\varphi=(\varphi_1,1):\Pi\to \Delta\rtimes \S_1$ where $\tilde{f}\gamma=\varphi_1(\gamma)\tilde{f}$ for all $\gamma\in \Pi$. Thus, $\varphi_1:\Pi\to \Delta$ is the morphism induced by $f$ in the single-valued sense. The relation $\sim_{\varphi,\psi}$ on $\Delta\times \{1\}$ is given by \begin{align*}
(\alpha,1)\sim_{\varphi,\psi} (\beta,1) &\iff \exists \gamma\in \Pi: 1=1,\; \alpha=\psi(\gamma)\beta\varphi_1(\gamma^{-1}) \\
&\iff \alpha\sim_{\varphi_1,\psi} \beta,
\end{align*}
so the coincidence classes coincide with the single-valued coincidence classes of $(f,g)$.
\end{rmk}

\subsection{The splitting space of $f$}\label{subsec:S}

Let $S=\ker\sigma$, where $\sigma:\Pi\to \S_n$ is the morphism induced by some lift $\tilde{f}$ of $f$. Note that $S$ does not depend on the chosen lift, since the morphism $\Pi\to \S_n$ induced by another lift is $\eta\sigma\eta^{-1}$ for some $\eta\in \S_n$, which has the same kernel as $\sigma$ (see Remark \ref{rmk:lift-choice}). Note that $S$ is a finite index normal subgroup of $\Pi$.

\begin{rmk}
The group $S$ can also be defined as follows. Let \[
f_*:\pi_1(X)\cong\Pi\to \pi_1(D_n(Y))\cong B_n(X)
\] 
be the fundamental group morphism of $f$. Here $B_n(Y)$ is the braid group on $n$ strands in $Y$. Let $\rho:B_n(Y)\to \S_n$ be the morphism sending a braid to its underlying permutation (whose kernel is the pure braid group $P_n(Y)$). Then $S=\ker(\rho\circ f_*)$. In particular, the group $S$ only depends on the homotopy class of $f$.
\end{rmk}

Let $\hat{X}=S\orb \tilde{X}$ be the finite covering space of $X$ corresponding to $S$, with covering map $\hat{p}:\hat{X}\to X$. Write $\hat{f}=f\circ \hat{p}$ and $\hat{g}=g\circ \hat{p}$. \[
\begin{tikzcd}
\hat{X} \ar[d,"\hat{p}"'] \ar[dr,"\hat{f}"] &  & \hat{X} \ar[d,"\hat{p}"'] \ar[dr,"\hat{g}"] & \\
X \ar[r,"f"{xshift=-1pt}] & D_n(Y) & X \ar[r,"g"{xshift=-2pt}] & Y.
\end{tikzcd}
\]

\begin{lem}
The $n$-valued map $\hat{f}:\hat{X}\to D_n(Y)$ is split.
\end{lem}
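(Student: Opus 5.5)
We show that $\hat{f}$ splits by building its $n$ factors explicitly from a fixed lift $\tilde{f}=(\tilde{f}_1,\ldots,\tilde{f}_n)$ of $f$. The universal cover of $\hat{X}$ is again $\tilde{X}$, with covering map $\hat{\pi}:\tilde{X}\to S\orb\tilde{X}=\hat{X}$, so that $p=\hat{p}\circ\hat{\pi}$. For each $i$, define
\[
\hat{f}_i:\hat{X}\to Y:\ \hat{\pi}(\tilde{x})\mapsto q(\tilde{f}_i(\tilde{x})).
\]

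The first step is to check that $\hat{f}_i$ is well defined. Take $\gamma\in S=\ker\sigma$. Then $\sigma_\gamma=1$, so
\[
\tilde{f}_i(\gamma\tilde{x})=\varphi_i(\gamma)\tilde{f}_{\sigma_\gamma^{-1}(i)}(\tilde{x})=\varphi_i(\gamma)\tilde{f}_i(\tilde{x}),
\]
and applying $q$ kills the factor $\varphi_i(\gamma)\in\Delta$. Hence $q\tilde{f}_i$ is constant on $S$-orbits. Continuity follows because $\hat{\pi}$ is a covering map, hence a quotient map, and $q\tilde{f}_i$ is continuous.

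The second step is to verify $\hat{f}(\hat{x})=\{\hat{f}_1(\hat{x}),\ldots,\hat{f}_n(\hat{x})\}$. With $\hat{x}=\hat{\pi}(\tilde{x})$, the lifting diagram gives
\[
\hat{f}(\hat{x})=f(\hat{p}\hat{\pi}(\tilde{x}))=f(p(\tilde{x}))=q^n(\tilde{f}(\tilde{x}))=\{q\tilde{f}_1(\tilde{x}),\ldots,q\tilde{f}_n(\tilde{x})\}.
\]
These $n$ points are pairwise distinct because $\tilde{f}(\tilde{x})\in F_n(\tilde{Y},\Delta)$. So the single-valued maps $\hat{f}_1,\ldots,\hat{f}_n$ have pairwise disjoint values at every point and their union is $\hat{f}$, which is exactly what it means for $\hat{f}$ to split.

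No step here is a real obstacle. The only point needing care is well-definedness, and it rests solely on $S$ being the kernel of the permutation part $\sigma$, which holds by construction. Note also that $\hat{X}$ being connected and the independence of $S$ from the chosen lift are already handled in the text, so they are not needed for this argument.
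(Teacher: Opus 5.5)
Your proof is correct and follows the same route as the paper: you define $\hat{f}_i$ by descending $q\circ\tilde{f}_i$ along the covering $\tilde{X}\to\hat{X}$, and you check well-definedness using $\sigma_\gamma=1$ for $\gamma\in S=\ker\sigma$. You also spell out two points the paper leaves implicit, namely continuity (via the quotient property of the covering map) and that the $n$ values are pairwise distinct.
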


\begin{proof}
Let $\tilde{f}$ be a lift of $f$, and let $\check{p}:\tilde{X}\to \hat{X}$ denote the universal covering map, so that we have a commutative diagram \[
\begin{tikzcd}
\tilde{X} \ar[r,"\tilde{f}"] \ar[d,"\check{p}"] \ar[dd,bend right,"p"'] & F_n(\tilde{Y},\Delta) \ar[dd,"q^n"] \\
\hat{X} \ar[d,"\hat{p}"] \ar[dr,"\hat{f}"] & \\
X \ar[r,"f"] & D_n(Y).
\end{tikzcd}
\]
That is, $\hat{f}(\check{p}(\tilde{x}))=\{q(\tilde{f}_1(\tilde{x})),\ldots,q(\tilde{f}_n(\tilde{x})) \}$ for all $\tilde{x}\in\tilde{X}$. It suffices to prove that the assignment $\check{p}(\tilde{x})\mapsto q(\tilde{f}_i(\tilde{x}))$ determines a well-defined map $\hat{f}_i:\hat{X}\to Y$, for all $i\in\{1,\ldots,n\}$.

Suppose $\check{p}(\tilde{x})=\check{p}(\tilde{x}')$. Then there is a $\gamma\in S$ so that $\tilde{x}'=\gamma\tilde{x}$. Since $S=\ker\sigma$, we have $\sigma_\gamma(i)=i$ for all $i$, so \[
q(\tilde{f}_i(\tilde{x}'))=q(\tilde{f}_i(\gamma\tilde{x}))=q(\varphi_i(\gamma)\tilde{f}_i(\tilde{x}))=q(\tilde{f}_i(\tilde{x}))
\]
for all $i$.
\end{proof}

\begin{df}
We call $\hat{X}$ the \emph{splitting space} of $f$.
\end{df}

Since the group $S$ is homotopy-invariant, so is the splitting space $\hat{X}$.

\begin{rmk}
Consider the finite covering map \[
r:F_n(Y)\to D_n(Y):(y_1,\ldots,y_n)\mapsto \{y_1,\ldots,y_n \}.
\]
In the spirit of the previous remark, we can also prove that $\hat{f}$ is split by observing that it admits a lift to $F_n(Y)$; indeed \[
\hat{f}_*(\pi_1(\hat{X}))=f_*(\hat{p}_*(\pi_1(\hat{X})))\subseteq f_*(S)\subseteq \ker\rho=r_*(P_n(Y))=r_*(\pi_1(F_n(Y))).
\]
Thus, there exists a map $\hat{X}\to F_n(X)$ making the following diagram commute: \[
\begin{tikzcd}
& F_n(Y) \ar[d,"r"] \\
\hat{X} \ar[r,"\hat{f}"{xshift=1pt}] \ar[ur,yshift=2pt] & D_n(Y).
\end{tikzcd}
\]
Since $F_n(Y)\subseteq Y^n$, this map can be written as $(\hat{f}_1,\ldots,\hat{f}_n)$ for single-valued maps $\hat{f}_i:\hat{X}\to Y$, which are then precisely the maps such that $\hat{f}$ splits as $\{\hat{f}_1,\ldots,\hat{f}_n\}$.
\end{rmk}

Coincidences of $(f,g)$ naturally relate to coincidences of the pairs $(\hat{f}_i,\hat{g})$: indeed, \begin{align*}
\hat{p}(\hat{x})\in \Coin(f,g) &\iff g(\hat{p}(\hat{x}))\in f(\hat{p}(\hat{x})) \\
&\iff \hat{g}(\hat{x}) \in \hat{f}(\hat{x})=\{\hat{f}_1(\hat{x}),\ldots,\hat{f}_n(\hat{x}) \} \\
&\iff \exists i\in \{1,\ldots,n\}: \hat{g}(\hat{x})=\hat{f}_i(\hat{x}).
\end{align*}
Since all images $\hat{f}_i(\hat{x})$ are distinct, we can write \[
\hat{p}^{-1}(\Coin(f,g)) = \bigsqcup_{i=1}^n \Coin(\hat{f}_i,\hat{g}).
\]
We will define the coincidence index of $(f,g)$ in terms of the single-valued coincidence indices of the pairs $(\hat{f}_i,\hat{g})$ (as defined e.g.\ in \cite{goncalves-htfpt}).

\subsection{The coincidence index}\label{subsec:ind}

\begin{df}
For maps $f:X\to D_n(Y)$ and $g:X\to Y$ and an open subset $U\subseteq X$, we say $(f,g,U)$ is \emph{admissible} if \[
\Coin(f,g,U) \vcentcolon= \Coin(f,g)\cap U
\]
is compact. We say a pair of homotopies $(F,G)=\{(f_t,g_t)\mid t\in I\}$ is \emph{admis\-sible in $U$} if $(F,G,U\times I)$ is admissible, i.e.\ if \[
\Coin(F,G,U\times I)=\{(x,t)\in U\times I \mid g_t(x)\in f_t(x) \}
\]
is compact. In that case we also say the triples $(f_0,g_0,U)$ and $(f_1,g_1,U)$ are \emph{admissibly homotopic}.
\end{df}

\begin{rmk}\label{rmk:def-admissible-homotopy}
In \cite{goncalves-htfpt}, the definition of an admissible homotopy (for single-valued maps) is different: a homotopy $\{(f_t,g_t)\mid t\in I\}$ is called admissible in $U$ if the set \[
K=\bigcup_{t\in I} \Coin(f_t,g_t,U)
\]
is compact. However, in our setting, these definitions are equivalent: if $(F,G)$ is admissible in $U$ in our sense, then it is also admissible in the above sense, since $K$ is the projection on the first component of $\Coin(F,G,U\times I)$. Conversely, if $K$ is compact, then \[
\Coin(F,G,U\times I)=(K\times I)\cap \Coin(F,G)
\]
is the intersection of a compact set and a closed set, hence compact.
\end{rmk}

\begin{lem}\label{lem:admissible-lift}
If $(f,g,U)$ is an admissible triple, then so is $(\hat{f}_i,\hat{g},\hat{p}^{-1}(U))$, for all $i\in \{1,\ldots,n\}$.
\end{lem}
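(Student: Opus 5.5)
The plan is to show that $\Coin(\hat{f}_i,\hat{g},\hat{p}^{-1}(U))$ is a closed subset of the compact set $\hat{p}^{-1}(\Coin(f,g,U))$, and hence compact.

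First, the identity established just before this subsection gives
\[
\Coin(\hat{f}_i,\hat{g})\subseteq \bigsqcup_{j=1}^n \Coin(\hat{f}_j,\hat{g})=\hat{p}^{-1}(\Coin(f,g)).
\]
Intersecting with $\hat{p}^{-1}(U)$ yields
\[
\Coin(\hat{f}_i,\hat{g},\hat{p}^{-1}(U))=\Coin(\hat{f}_i,\hat{g})\cap \hat{p}^{-1}(\Coin(f,g,U)).
\]

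Next, I will argue that $\hat{p}^{-1}(K)$ is compact whenever $K\subseteq X$ is compact. Since $S$ has finite index in $\Pi$, $\hat{p}$ is a finite-sheeted covering map. Such a map is proper: each point of $X$ has an evenly covered open neighbourhood whose preimage is finitely many homeomorphic copies. Cover $K$ by finitely many such neighbourhoods whose closures are still evenly covered and compact (possible because $X$ is a manifold, hence locally compact). Then $\hat{p}^{-1}(K)$ sits inside a finite union of compact sets. It is also closed, since $K$ is closed in the Hausdorff space $X$ and $\hat{p}$ is continuous. So $\hat{p}^{-1}(K)$ is compact. A shorter route is available here: $\hat{X}$ is a finite cover of the closed manifold $X$, so $\hat{X}$ is itself compact, and any closed subset of it is compact.

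Finally, $\Coin(\hat{f}_i,\hat{g})$ is closed in $\hat{X}$, because it is the preimage of the diagonal under $(\hat{f}_i,\hat{g})$ and $Y$ is Hausdorff (Lemma \ref{lem:coin-closed} with $n=1$). Taking $K=\Coin(f,g,U)$, which is compact by admissibility, the displayed intersection is a closed subset of the compact set $\hat{p}^{-1}(K)$, and so it is compact.

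The only step needing real justification is the properness of $\hat{p}$, and that is immediate because the covering is finite. Closedness of $\Coin(\hat{f}_i,\hat{g})$ is routine.
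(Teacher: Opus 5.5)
Your proof is correct and follows the paper's argument: both write $\Coin(\hat{f}_i,\hat{g},\hat{p}^{-1}(U))$ as the intersection of the closed set $\Coin(\hat{f}_i,\hat{g})$ with $\hat{p}^{-1}(\Coin(f,g,U))$, which is compact because the finite covering $\hat{p}$ is proper. The only differences are minor: the paper uses the full decomposition $\hat{p}^{-1}(\Coin(f,g))=\bigsqcup_j \Coin(\hat{f}_j,\hat{g})$ where you use only the inclusion, and your observation that $\hat{X}$ is compact gives a slightly shorter route.
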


\begin{proof}
Since $(f,g,U)$ is admissible, $\Coin(f,g)\cap U$ is compact. Since $\hat{p}$ is a finite covering map, it is proper, so $\hat{p}^{-1}(\Coin(f,g)\cap U)$ is compact. Since \begin{align*}
\hat{p}^{-1}(\Coin(f,g)\cap U)&=\hat{p}^{-1}(\Coin(f,g))\cap \hat{p}^{-1}(U)\\&=\left(\bigsqcup_{i=1}^n \Coin(\hat{f}_i,\hat{g})\right)\cap \hat{p}^{-1}(U)\\&=\bigsqcup_{i=1}^n \left(\Coin(\hat{f}_i,\hat{g})\cap \hat{p}^{-1}(U)\right),
\end{align*}
each of the sets $\Coin(\hat{f}_i,\hat{g})\cap \hat{p}^{-1}(U)$ is the intersection of the compact set $\hat{p}^{-1}(\Coin(f,g)\cap U)$ with the closed set $\Coin(\hat{f}_i,\hat{g})$, hence it is compact as well.
\end{proof}

\begin{df}
For any admissible triple $(f,g,U)$, we define \[
\ind(f,g,U)=\frac{1}{[\Pi:S]} \sum_{i=1}^{n} \ind_1(\hat{f}_i,\hat{g},\hat{p}^{-1}(U))
\]
where $\ind_1$ denotes the single-valued coincidence index.
\end{df}

\begin{thm}\label{thm:index}
The coincidence index satisfies the following properties:
\begin{itemize}
\item[\rm(i)] {\sc Homotopy invariance} 

\noindent
If $\{(f_t,g_t)\mid t\in I \}$ is an admissible homotopy in $U$, then \[
\ind(f_0,g_0,U)=\ind(f_1,g_1,U).
\]

\item[\rm(ii)] {\sc Additivity} 

\noindent
If $U_1,U_2\subseteq U$ are disjoint opens such that $\Coin(f,g,U)\subseteq U_1\cup U_2$, then \[
\ind(f,g,U)=\ind(f,g,U_1)+\ind(f,g,U_2).\footnote{Note that when $(f,g,U)$ is admissible, so are $(f,g,U_1)$ and $(f,g,U_2)$, since $\Coin(f,g,U_1)$ and $\Coin(f,g,U_2)$ are both open and each other's complement, and hence both closed, in the compact set $\Coin(f,g,U)$.}
\]

\item[\rm(iii)] {\sc Splitting property} 

\noindent
If $f|_U=\{f_1,\ldots,f_n\}$, then \[
\ind(f,g,U)=\sum_{i=1}^n \ind_1(f_i,g,U).
\]
\end{itemize}
\end{thm}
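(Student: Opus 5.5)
The plan is to reduce each property to the corresponding property of the single-valued index $\ind_1$ on the splitting space $\hat{X}$, using the definition $\ind(f,g,U)=\frac{1}{[\Pi:S]}\sum_i \ind_1(\hat{f}_i,\hat{g},\hat{p}^{-1}(U))$ and Lemma \ref{lem:admissible-lift}.

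\textbf{Homotopy invariance.} Let $(F,G)$ be admissible in $U$. The group $S$ is homotopy invariant, so $\hat{X}$ is the same for all $f_t$. The argument is then:
\begin{itemize}
\item Since $\hat{X}\times I$ is a covering of $X\times I$ with covering group corresponding to $S$, the composite $F\circ(\hat{p}\times\id)$ splits as $\{\hat{F}_1,\ldots,\hat{F}_n\}$. To see this, repeat the proof of the splitting lemma with the lift of $F$ described in Remark \ref{rmk:cc-homotopy}; it induces the same $\sigma$ for every $t$.
\item Choose the splittings of each $\hat{f}_t$ to be the restrictions $\hat{F}_i(\cdot,t)$. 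This is harmless, because the sum over $i$ does not depend on how the factors are labelled.
\item Apply the argument of Lemma \ref{lem:admissible-lift} to $X\times I$. Using the disjoint decomposition $(\hat{p}\times\id)^{-1}\Coin(F,G)=\bigsqcup_i \Coin(\hat{F}_i,\hat{G})$, each single-valued homotopy $(\hat{F}_i,\hat{G})$ is admissible in $\hat{p}^{-1}(U)$.
\item Homotopy invariance of $\ind_1$ then gives equality for each $i$.
\end{itemize}

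\textbf{Additivity.} The sets $\hat{p}^{-1}(U_1)$ and $\hat{p}^{-1}(U_2)$ are disjoint opens inside $\hat{p}^{-1}(U)$, and they cover $\Coin(\hat{f}_i,\hat{g})\cap\hat{p}^{-1}(U)$, since that set lies in $\hat{p}^{-1}(\Coin(f,g,U))$. Additivity of $\ind_1$ for each $i$, summed and divided by $[\Pi:S]$, gives the claim.

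\textbf{Splitting property.} This is the main step. Suppose $f|_U=\{f_1,\ldots,f_n\}$. On $\hat{p}^{-1}(U)$ we then have two splittings of $\hat{f}$: the global one $\{\hat{f}_i\}$ and the pulled-back one $\{f_j\circ\hat{p}\}$.
\begin{itemize}
\item Since the values are distinct, on each connected component $C$ of $\hat{p}^{-1}(U)$ there is a permutation $\tau_C$ with $\hat{f}_i|_C=f_{\tau_C(i)}\hat{p}|_C$. This holds because the set where $\hat{f}_i=f_j\hat{p}$ is both open and closed, by continuity and the distinctness of the values.
\item Using additivity of $\ind_1$ over components (only finitely many components meet the compact coincidence set), we get
\[
\sum_i \ind_1(\hat{f}_i,\hat{g},\hat{p}^{-1}(U))=\sum_j \ind_1(f_j\hat{p},g\hat{p},\hat{p}^{-1}(U)).
\]
\item Finally, use the standard multiplicativity of the single-valued coincidence index under a finite $d$-sheeted covering $\hat{p}$, with $d=[\Pi:S]$:
\[
\ind_1(f_j\hat{p},g\hat{p},\hat{p}^{-1}(U))=d\cdot\ind_1(f_j,g,U).
\]
\end{itemize}
To justify the multiplicativity, I would first apply additivity to reduce to small isolating neighbourhoods over which $\hat{p}$ is trivial. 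The preimage of such a neighbourhood is then $d$ homeomorphic copies, mapped by orientation-preserving homeomorphisms once $\hat{X}$ carries the pulled-back orientation. On each copy the index is the local index of $(f_j,g)$, by the normalization of the local index under orientation-preserving homeomorphisms of the domain. Dividing by $d$ gives (iii).

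The main obstacle I expect is this covering multiplicativity: in particular, handling the case where $\Coin(f_j,g,U)$ is not finite, and choosing orientations consistently. I would first try reducing to finitely many coincidences by a small admissible homotopy of $f_j$, which is allowed by homotopy invariance of $\ind_1$, and then use the localization argument above.
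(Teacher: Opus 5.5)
Your proposal is correct and follows the paper's proof essentially step by step. You lift to the splitting space $\hat{X}$ (and to $\hat{X}\times I$ for homotopies), get admissibility from the properness argument of Lemma \ref{lem:admissible-lift}, and apply the corresponding property of $\ind_1$; for (iii) you reduce to finitely many coincidences and use that $\hat{p}$ is a $[\Pi:S]$-sheeted orientation-preserving local homeomorphism.

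Your component-wise permutation $\tau_C$ is a genuine improvement. The matching $\hat{f}_i\leftrightarrow f_j\circ\hat{p}$ really does change from sheet to sheet of $\hat{p}^{-1}(U)$: on the sheet $\gamma\hat{U}_0$ one gets $\hat{f}_i=f_{\sigma_\gamma^{-1}(i)}\circ\hat{p}$. So the paper's global reordering of the $\hat{f}_i$ is only justified through your additivity-over-components argument.
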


Before proving the theorem, we note that it has the following immediate consequences:

\begin{cor}\label{cor:index}
The coincidence index satisfies the following properties:
\begin{itemize}
\item[\rm(iv)] {\sc Excision}

\noindent
If $V\subseteq U$ is open and $\Coin(f,g,U)\subseteq V$, then \[
\ind(f,g,U)=\ind(f,g,V).\footnote{Note that when $(f,g,U)$ is admissible, so is $(f,g,V)$, as $\Coin(f,g,V)=\Coin(f,g,U)$.}
\]

\item[\rm(v)] {\sc Solution}

\noindent
If $\ind(f,g,U)\neq 0$, then $\Coin(f,g,U)\neq \varnothing$.
\end{itemize}
\end{cor}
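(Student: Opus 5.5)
Both properties follow from the three properties of Theorem \ref{thm:index}, exactly as in the single-valued case; I will not go back to the definition via the splitting space.

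For \textsc{Excision}, suppose $V\subseteq U$ is open and $\Coin(f,g,U)\subseteq V$. I take $U_1=V$ and $U_2=\varnothing$ in \textsc{Additivity}. These are disjoint open subsets of $U$, and $\Coin(f,g,U)\subseteq U_1\cup U_2$. Additivity then gives
\[
\ind(f,g,U)=\ind(f,g,V)+\ind(f,g,\varnothing).
\]
It remains to show $\ind(f,g,\varnothing)=0$. One way is to apply Additivity to $U=U_1=U_2=\varnothing$, which gives $\ind(f,g,\varnothing)=2\ind(f,g,\varnothing)$. Alternatively, the map $f|_{\varnothing}$ splits trivially, so the \textsc{Splitting property} gives $\ind(f,g,\varnothing)=\sum_i\ind_1(f_i,g,\varnothing)=0$, since the single-valued index vanishes on the empty set. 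Either way, the triples $(f,g,\varnothing)$ and $(f,g,V)$ are admissible, because their coincidence sets are respectively empty and equal to $\Coin(f,g,U)$, which is compact.

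For \textsc{Solution}, I argue by contraposition. Suppose $\Coin(f,g,U)=\varnothing$. Then $\Coin(f,g,U)\subseteq\varnothing$, so Excision with $V=\varnothing$ gives $\ind(f,g,U)=\ind(f,g,\varnothing)=0$.

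There is no real obstacle here. The only point needing care is justifying $\ind(f,g,\varnothing)=0$ from the stated properties. If one prefers to avoid the degenerate use of Additivity, this can also be read off directly from the definition: $\hat{p}^{-1}(\varnothing)=\varnothing$, and each single-valued index $\ind_1(\hat{f}_i,\hat{g},\varnothing)$ vanishes.
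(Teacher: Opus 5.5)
Your proposal is correct and follows essentially the same route as the paper: you get $\ind(f,g,\varnothing)=0$ from Additivity with all three sets empty, derive Excision from Additivity with $U_1=V$, $U_2=\varnothing$, and prove Solution by contraposition using Excision with $V=\varnothing$. Your alternative justifications of $\ind(f,g,\varnothing)=0$, via the Splitting property or directly from the definition, are also valid but not needed.
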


\begin{proof}[Proof of Corollary \ref{cor:index}]
Note that $(f,g,\varnothing)$ is admissible. Using (ii), \[
\ind(f,g,\varnothing)=\ind(f,g,\varnothing)+\ind(f,g,\varnothing),
\]
so $\ind(f,g,\varnothing)=0$. Property (iv) follows by applying (ii) to $V$ and $\varnothing$. The contrapositive of (v) follows by applying (iv) to $V=\varnothing$.
\end{proof}

\begin{proof}[Proof of Theorem \ref{thm:index}]
For (i), suppose $(F,G)=\{(f_t,g_t)\mid t\in I \}$ is an admissible homotopy in $U$. As we saw in Remark \ref{rmk:cc-homotopy}, the morphism $\sigma$ induced by $F$ is also the one induced by all maps $f_t$. In particular, the splitting space of $F$ is $\hat{X}\times I$ (with covering map $\hat{p}\times \id:\hat{X}\times I\to X\times I$), where $\hat{X}$ (with covering map $\hat{p}:\hat{X}\to X$) is the splitting space of all maps $f_t$. We have \[
\hat{F}=F\circ (\hat{p}\times \id):\hat{X}\times I\to D_n(Y):(\hat{x},t)\mapsto \hat{f}_t(\hat{x}).
\]
If $\hat{F}=\{\hat{F}_1,\ldots,\hat{F}_n\}$ is a splitting, then for each $t\in I$, a splitting for $\hat{f}_t$ is $\{(\hat{f}_t)_1,\ldots,(\hat{f}_t)_n \}$ with $(\hat{f}_t)_i(\hat{x})\vcentcolon=\hat{F}_i(\hat{x},t)$ for all $i$ (i.e., $\hat{F}_i=\{(\hat{f}_t)_i\mid t\in I\}$). We also have \[
\hat{G}:\hat{X}\times I\to Y: (\hat{x},t)\mapsto \hat{g}_t(\hat{x}),
\]
so $\hat{G}=\{\hat{g}_t\mid t\in I\}$.
Since $(F,G,U\times I)$ is admissible, so is \[
(\hat{F}_i,\hat{G},(\hat{p}\times \id)^{-1}(U\times I))=(\hat{F}_i,\hat{G},\hat{p}^{-1}(U)\times I)
\] 
for each $i$, by Lemma \ref{lem:admissible-lift}. Thus, $\{((\hat{f}_t)_i,\hat{g}_t)\mid t\in I \}$ is an admissible homotopy in $\hat{p}^{-1}(U)$. By homotopy-invariance of the single-valued coincidence index, we get \[
\ind_1((\hat{f}_0)_i,\hat{g}_0,\hat{p}^{-1}(U))=\ind_1((\hat{f}_1)_i,\hat{g}_1,\hat{p}^{-1}(U))
\]
for all $i$. Consequently, \begin{align*}
\ind(f_0,g_0,U) &= \frac{1}{[\Pi:S]} \sum_{i=1}^n \ind_1((\hat{f}_0)_i,\hat{g}_0,\hat{p}^{-1}(U)) \\
&= \frac{1}{[\Pi:S]} \sum_{i=1}^n \ind_1((\hat{f}_1)_i,\hat{g}_1,\hat{p}^{-1}(U)) \\
&= \ind(f_1,g_1,U).
\end{align*}

For (ii), let $U_1,U_2\subseteq U$ be disjoint opens with $\Coin(f,g,U)\subseteq U_1\cup U_2$. Then $\hat{p}^{-1}(U_1)$ and $\hat{p}^{-1}(U_2)$ are disjoint opens contained in $\hat{p}^{-1}(U)$ with \begin{align*}
\Coin(\hat{f}_i,\hat{g},\hat{p}^{-1}(U))&\subseteq \hat{p}^{-1}(\Coin(f,g,U))\\&\subseteq \hat{p}^{-1}(U_1\cup U_2)\\&=\hat{p}^{-1}(U_1)\cup \hat{p}^{-1}(U_2).
\end{align*}
By additivity of the single-valued coincidence index, \begin{align*}
\ind(f,g,U) &= \frac{1}{[\Pi:S]} \sum_{i=1}^n \ind_1(\hat{f}_i,\hat{g},\hat{p}^{-1}(U)) \\
&= \frac{1}{[\Pi:S]} \sum_{i=1}^n \left(\ind_1(\hat{f}_i,\hat{g},\hat{p}^{-1}(U_1))+\ind_1(\hat{f}_i,\hat{g},\hat{p}^{-1}(U_2))\right) \\
&= \ind(f,g,U_1)+\ind(f,g,U_2).
\end{align*}

For (iii), suppose $f|_U=\{f_1,\ldots,f_n \}$. Then on $\hat{p}^{-1}(U)$ we can write $\hat{f}=f\circ \hat{p}=\{f_1\circ\hat{p},\ldots,f_n\circ \hat{p} \}$, so we can choose an order on the factors $\hat{f}_i$ such that $\hat{f}_i=f_i\circ \hat{p}$ on $\hat{p}^{-1}(U)$. We get \[
\ind(f,g,U) = \frac{1}{[\Pi:S]} \sum_{i=1}^n \ind_1(f_i\circ \hat{p},g\circ \hat{p},\hat{p}^{-1}(U)).
\]
Note that \[
\Coin(f,g,U)=\bigsqcup_{i=1}^n \Coin(f_i,g,U).
\]
Since $\Coin(f,g,U)$ is compact, so is $\Coin(f_i,g,U)$ for each $i$ (being the intersection of the compact set $\Coin(f,g,U)$ with the closed set $\Coin(f_i,g)$). By single-valued coincidence theory, we can find maps $f'_i,g':U\to Y$ such that $(f_i,g)$ and $(f'_i,g')$ are admissibly homotopic in $U$ and $\Coin(f'_i,g',U)$ is finite. Then \[
\ind_1(f_i,g,U)=\ind_1(f'_i,g',U)=\sum_{x\in U} \ind_1(f'_i,g',x)
\]
with $\ind_1(f'_i,g',x)$ the local coincidence index\footnote{If $U\subseteq X$ is an open neighborhood of $x$ such that $\Coin(f'_i,g',U)=\{x\}$, the index $\ind_1(f'_i,g',x)$ is defined as $\ind_1(f'_i,g',U)$, which is independent of the choice of $U$ by the excision property of the single-valued index.}, which is non-zero for finitely many $x\in U$. Since the index is a local invariant, and $\hat{p}$ is an orientation-preserving local homeomorphism, we have \[
\ind_1(f'_i,g',x)=\ind_1(f'_i\circ \hat{p},g'\circ \hat{p},\hat{x})
\] 
for all $\hat{x}\in \hat{p}^{-1}(x)$. Since $\hat{p}$ is a $[\Pi:S]$-fold cover, each $x\in U$ has $[\Pi:S]$ preimages $\hat{x}\in \hat{p}^{-1}(U)$, so \begin{align*}
\sum_{x\in U}\ind_1(f'_i,g',x)&=\frac{1}{[\Pi:S]}\sum_{\hat{x}\in \hat{p}^{-1}(U)}\ind_1(f'_i\circ \hat{p},g'\circ \hat{p},\hat{x})\\&=\frac{1}{[\Pi:S]}\ind_1(f'_i\circ \hat{p},g'\circ \hat{p},\hat{p}^{-1}(U)).
\end{align*}
Composing the homotopies between $f'_i,g':U\to Y$ and $f_i,g:U\to Y$ with $\hat{p}$ gives homotopies between the pairs $f'_i\circ \hat{p},g'\circ \hat{p}:\hat{p}^{-1}(U)\to Y$ and $f_i\circ \hat{p},g\circ \hat{p}:\hat{p}^{-1}(U)\to Y$, which are still admissible by Lemma \ref{lem:admissible-lift}. So we also have \[
\ind_1(f'_i\circ \hat{p},g'\circ \hat{p},\hat{p}^{-1}(U))=\ind_1(f_i\circ \hat{p},g\circ \hat{p},\hat{p}^{-1}(U))
\]
for all $i$. Together, we obtain \begin{align*}
\ind(f,g,U) &= \frac{1}{[\Pi:S]} \sum_{i=1}^n \ind_1(f_i\circ \hat{p},g\circ \hat{p},\hat{p}^{-1}(U)) \\
&= \sum_{i=1}^n \ind_1(f_i,g,U). \qedhere
\end{align*}
\end{proof}

\begin{rmk}\label{rmk:local-index}
Suppose $x$ is an isolated coincidence point of $(f,g)$, i.e.\ there exists a $U\subseteq X$ such that $\Coin(f,g,U)=\{x\}$. This $U$ can be chosen small enough that $f$ splits on $U$, say $f|_U=\{f_1,\ldots,f_n\}$. Then there is a unique $i\in \{1,\ldots,n\}$ such that $x\in \Coin(f_i,g)$, and \[
\ind(f,g,x)\vcentcolon=\ind(f,g,U)=\sum_{i=1}^{n}\ind_1(f_i,g,U)=\ind_1(f_i,g,x).
\]
\end{rmk}

\begin{rmk}\label{rmk:1valued-index}
If $f$ is split on the whole space $X$, say $f=\{f_1,\ldots,f_n \}$, then $S=\Pi$, $\hat{X}=X$ and $\hat{p}$ is the identity map, so we get \[
\ind(f,g,U)=\sum_{i=1}^{n} \ind_1(f_i,g,U).
\]
In particular, if $n=1$, our index coincides with the single-valued coincidence index.
\end{rmk}

\begin{rmk}\label{rmk:fix-index}
If $X=Y$ and $g$ is the identity map, then \[
\ind(f,U)\vcentcolon=\ind(f,\id,U)=\frac{1}{[\Pi:S]} \sum_{i=1}^n \ind_1(\hat{f}_i,\hat{p},\hat{p}^{-1}(U))
\]
defines a function \[
\ind:\{\text{admissible pairs }(f,U) \}\to \RR
\]
(where $(f,U)$ is admissible if $\Fix(f)\cap U$ is compact) satisfying \smallskip \begin{itemize}
\item[(i)'] If $\{f_t\mid t\in I\}$ is an admissible homotopy in $U$, then \[
\ind(f_0,U)=\ind(f_1,U).
\]
\item[(ii)'] If $U_1,U_2\subseteq U$ are disjoint opens such that $\Fix(f,U)\subseteq U_1\cup U_2$, then \[
\ind(f,U)=\ind(f,U_1)+\ind(f,U_2).
\]
\item[(iii)'] If $f$ splits as $\{f_1,\ldots,f_n \}$ on $U$ and $\Fix(f,U)$ is a single point, then \[
\ind(f,U)=\ind_1(f_i,U)
\] with $i$ such that $\Fix(f_i,U)\neq \varnothing$.
\end{itemize}
\smallskip
(where (iii)' follows by combining (iii) with the excision and empty set properties of the single-valued index).

In \cite{staecker2018}, C.~Staecker showed that the $n$-valued fixed point index is uniquely determined by the properties (i)', (ii)' and (iii)'. Thus, $\ind(f,U)$ coincides with the $n$-valued fixed point index.
\end{rmk}

\subsection{The Nielsen number}

By Theorem \ref{thm:cc-open-compact}, we can make the following definition.

\begin{df}
For a coincidence class $C$ of $(f,g)$, define \[
\ind(f,g,C)=\ind(f,g,U)
\]
where $U\subseteq X$ is an open set such that $C=\Coin(f,g)\cap U$.
\end{df}

By the excision property (Corollary \ref{cor:index} (iv)), this definition is independent of the choice of $U$.

\begin{df}
The \emph{Nielsen number} $N(f,g)$ is the number of coincidence classes with non-zero index.
\end{df}

From the previous sections, we immediately get:

\begin{cor}\label{cor:N-finite+leq}
$N(f,g)$ is finite and $N(f,g)\leq \# \Coin(f,g)$.
\end{cor}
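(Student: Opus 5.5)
Finiteness comes from Corollary \ref{cor:cc-finite}: only finitely many coincidence classes are non-empty, so it suffices to show that every essential class is non-empty. Take a coincidence class $C$ with $\ind(f,g,C)\neq 0$. By definition $\ind(f,g,C)=\ind(f,g,U)$ for an open $U\subseteq X$ with $C=\Coin(f,g)\cap U$; such a $U$ exists because $C$ is open in $\Coin(f,g)$ (Theorem \ref{thm:cc-open-compact}). The triple $(f,g,U)$ is admissible because $C$ is compact, again by Theorem \ref{thm:cc-open-compact}. The solution property (Corollary \ref{cor:index}(v)) then gives $\Coin(f,g,U)=C\neq\varnothing$. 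Hence
\[
N(f,g)\leq \#\{\text{non-empty coincidence classes}\}<\infty .
\]

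For the inequality $N(f,g)\leq\#\Coin(f,g)$, we may assume $\Coin(f,g)$ is finite, since otherwise there is nothing to prove. By Proposition \ref{prop:cc-partition} the coincidence classes are pairwise disjoint subsets of $\Coin(f,g)$. Each essential class is non-empty by the first step, so we can choose a point in each essential class, and distinct classes give distinct points. This defines an injection from the set of essential classes into $\Coin(f,g)$, which gives the bound.

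The only point needing care is that the index of a class is well defined and that $(f,g,U)$ is admissible. Both are supplied by Theorem \ref{thm:cc-open-compact} together with excision, so no real obstacle remains.
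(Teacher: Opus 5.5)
Your proposal is correct and follows essentially the same route as the paper: the solution property (Corollary~\ref{cor:index}(v)) bounds $N(f,g)$ by the number of non-empty coincidence classes, Corollary~\ref{cor:cc-finite} gives finiteness, and disjointness of the classes gives the injection into $\Coin(f,g)$. You also spell out why $(f,g,U)$ is admissible via Theorem~\ref{thm:cc-open-compact}, which the paper leaves implicit.
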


\begin{proof}
By the solution property (Corollary \ref{cor:index} (v)), the Nielsen number is bounded above by the number of non-empty coincidence classes. It follows from Corollary \ref{cor:cc-finite} that $N(f,g)$ is finite. Since each non-empty coincidence class contains a coincidence point (and these points are all distinct as the coincidence classes are disjoint), we get $N(f,g)\leq \# \Coin(f,g)$.
\end{proof}

\begin{thm}\label{thm:hom-inv}
The Nielsen number is a homotopy invariant.
\end{thm}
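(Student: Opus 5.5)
Given a homotopy $(F,G)=\{(f_t,g_t)\mid t\in I\}$ of pairs, it suffices to show $N(f_0,g_0)=N(f_1,g_1)$. The plan is the standard one: use the lifting classes of the homotopy itself to match coincidence classes at the two ends, and then show matched classes have equal index.

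First, following Remark \ref{rmk:cc-homotopy}, fix a lift $(\tilde{F}_1,\ldots,\tilde{F}_n)$ of $F$ and a lift $\tilde{G}$ of $G$, with induced morphisms $\varphi$ and $\psi$. Their restrictions give lifts of $f_t$ and $g_t$ inducing the same $\varphi$ and $\psi$ for every $t$. By Proposition \ref{prop:cc-equiv-R} and the discussion after it, for each $t$ the coincidence classes of $(f_t,g_t)$ are indexed by the same set $\R[\varphi,\psi]$: the class labelled $[(\alpha,i)]$ is $C_t=p\Coin(\alpha(\tilde{f}_t)_i,\tilde{g}_t)$. Similarly, the coincidence classes of $(F,G)$, as maps on $X\times I$, are $\mathbf{C}=(p\times\id)\Coin(\alpha\tilde{F}_i,\tilde{G})$. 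Directly from the definitions, $\mathbf{C}\cap (X\times\{t\})=C_t\times\{t\}$.

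It therefore suffices to prove $\ind(f_0,g_0,C_0)=\ind(f_1,g_1,C_1)$ for each $[(\alpha,i)]$.

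1. By Theorem \ref{thm:cc-open-compact} applied to the pair $(F,G)$ on the compact space $X\times I$, the class $\mathbf{C}$ is compact and open in $\Coin(F,G)$. Strictly speaking that theorem was stated for $X$; its proof only uses that the domain is compact and locally path connected and that Lemma \ref{lem:coin-closed} holds, so it applies verbatim.
2. Since $\mathbf{C}$ and $\Coin(F,G)\setminus\mathbf{C}$ are disjoint compact sets in $X\times I$, we can choose an open $W\subseteq X\times I$ with $W\cap\Coin(F,G)=\mathbf{C}$.
3. This is the main obstacle: replace $W$ by a product neighbourhood. The usual tube-lemma argument does this locally in $t$. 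For each $t$, pick an open $U_t\subseteq X$ and an open interval $J_t\ni t$ with $U_t\times \overline{J_t}\cap\Coin(F,G)=\mathbf{C}\cap(X\times\overline{J_t})$. Such a pair exists because $\mathbf{C}\cap (X\times\{t\})$ and the remainder $(\Coin(F,G)\setminus\mathbf{C})\cap (X\times\{t\})$ are disjoint compacta, and compactness lets the separation persist for nearby $t$.
4. On each $J_t$, the homotopy restricted to $\overline{J_t}$ is admissible in $U_t$, since its coincidence set there is the compact set $\mathbf{C}\cap(X\times\overline{J_t})$. Homotopy invariance (Theorem \ref{thm:index}(i)), applied after reparametrising $\overline{J_t}$ to $I$, gives $\ind(f_s,g_s,U_t)=\ind(f_{s'},g_{s'},U_t)$ for $s,s'\in\overline{J_t}$. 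Each of these equals $\ind(f_s,g_s,C_s)$, because $U_t\cap\Coin(f_s,g_s)=C_s$.
5. Hence $s\mapsto \ind(f_s,g_s,C_s)$ is locally constant on $I$. Connectedness of $I$ then makes it constant, via a Lebesgue number argument on the cover $\{J_t\}$.

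Consequently the bijection $C_0\leftrightarrow C_1$ preserves indices, and in particular essentiality. So $N(f_0,g_0)=N(f_1,g_1)$.

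It is worth noting that some classes may be empty at one end. This causes no trouble: an empty class has index $0$, by the empty set case of Corollary \ref{cor:index}, and the argument above covers it.
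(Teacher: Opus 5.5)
Your proposal is correct and follows essentially the same route as the paper. The paper likewise indexes the classes of every $(f_t,g_t)$ by $\R[\varphi,\psi]$ via Remark~\ref{rmk:cc-homotopy}, isolates the class $C$ of $(F,G)$ by an open $V\subseteq X\times I$, builds a product neighbourhood $U_t\times W_t$ around each slice, applies homotopy invariance of the index locally in $t$, and concludes by compactness of $I$. Your step~3 is exactly the paper's construction once made precise: cover the compact slice $C_t\times\{t\}$ by finitely many basic opens inside $W$, then shrink the time interval so it avoids the compact set $\pr(\mathbf{C}\setminus(U_t\times I))$. Your use of intervals $J_t$ with a local-constancy argument is, if anything, slightly tidier than the paper's sets $W_t$, which need not be intervals.
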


\begin{proof}
Suppose $(F,G)=\{(f_t,g_t)\mid t\in I \}$ is a pair of homotopies. Recall from Remark \ref{rmk:cc-homotopy} that the set of coincidence classes of $(f_t,g_t)$ is \[
\{p\Coin(\alpha(\tilde{f}_t)_i,\tilde{g}_t) \mid [(\alpha,i)]\in \R[\varphi,\psi] \}
\]
for all $t$. We will show that for a fixed pair $(\alpha,i)$, the index \[
\ind(f_t,g_t,p\Coin(\alpha(\tilde{f}_t)_i,\tilde{g}_t))
\]
is the same for all $t$.

Write $C=p\Coin(\alpha \tilde{F}_i,\tilde{G})$ and $C_t=p\Coin(\alpha(\tilde{f}_t)_i,\tilde{g}_t)$ for all $t\in I$. Since $C$ is a coincidence class of $(F,G)$, it is open in $\Coin(F,G)$, so we can take an open set $V\subseteq X\times I$ such that $C=\Coin(F,G)\cap V$. Then \[
C_t\times \{t\}=(\Coin(f_t,g_t)\times \{t\})\cap V
\] 
for all $t\in I$ (see Figure \ref{fig:amoebe}).

Fix $t\in I$. Since $C_t\times \{t\}\subseteq V$, each point $(x,t)\in C_t\times \{t\}$ has an open neighborhood $V_x\subseteq X\times I$ that is contained in $V$. We may further assume $V_x$ is a basic open for the product topology on $X\times I$, i.e.\ $V_x=U_x\times W_x$ for open sets $U_x\subseteq X$ and $W_x\subseteq I$.
Since $C_t$ is compact, so is $C_t\times \{t\}$, hence its open cover $\{V_x \mid x\in C_t \}$ has a finite subcover; say $\{V_{x_1},\ldots,V_{x_k} \}$. Then $U_t\vcentcolon=U_{x_1}\cup \ldots \cup U_{x_k}$ and $W'_t\vcentcolon=W_{x_1}\cap \ldots \cap W_{x_k}$ are open sets such that $C_t\times \{t\}\subseteq U_t\times W'_t\subseteq V$. 

Note that it is not necessarily true that $C_s\subseteq U_t$ for all $s\in W'_t$. However, if we let $\pr:X\times I\to I$ denote the projection on the second component, the set \[
W_t\vcentcolon=W'_t\setminus \pr(C\setminus(U_t\times I))
\]
does satisfy this property: it is open since $C\setminus(U_t\times I)$ is compact, it contains $t$, and for all $s\in W_t$, if $x\in C_s$, then \[
\pr(x,s)=s\in W_t\subseteq I\setminus \pr(C\setminus(U_t\times I))
\]
so $x\in U_t$.

\begin{figure}[h!]
\includegraphics[scale=0.875]{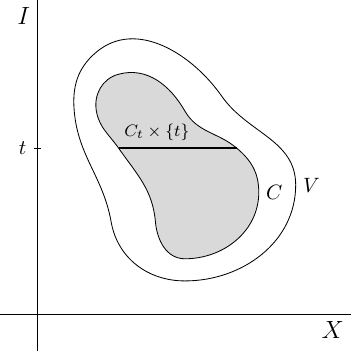}
\caption{The coincidence class $C_t$ of $(f_t,g_t)$ can be visualised as the `slice at $t$' of the coincidence class $C$ of $(F,G)$. If $V$ is an isolating neighborhood of $C$ in $\Coin(F,G)$, then the slice at $t$ of $V$ is an isolating neighborhood of $C_t$ in $\Coin(f_t,g_t)$.}
\label{fig:amoebe}
\end{figure}

\begin{figure}[h!]
\includegraphics[scale=0.875]{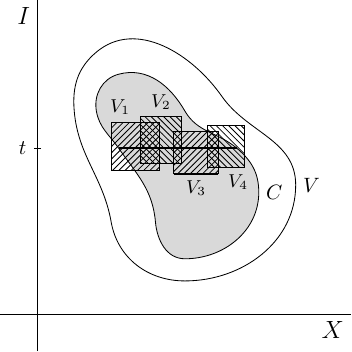}
\hfill
\includegraphics[scale=0.875]{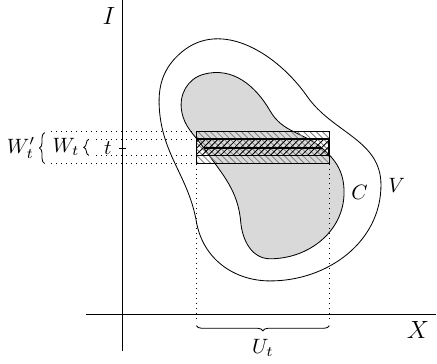}
\caption{A finite cover for $C_t\times \{t\}$ consisting of basic open subsets of $V$, and the corresponding sets $U_t$, $W'_t$ and $W_t$.}
\label{fig:amoebe2}
\end{figure}

Since $W_t\subseteq W'_t$, we also have $C_s\times \{s\}\subseteq U_t\times W'_t\subseteq V$, from which it follows that $C_s=\Coin(f_s,g_s)\cap U_t$. In particular, for every $s\in W_t$, the pair of homotopies $\{(f_r,g_r)\mid r\in [s,t]\}$ (resp.\ $\{(f_r,g_r)\mid r\in [t,s]\}$ if $s>t$) is admissible in $U_t$, since $\Coin(F,G,U_t\times[s,t])=C\cap (X\times [s,t])$, and we have \[
\ind(f_s,g_s,C_s)=\ind(f_s,g_s,U_t)=\ind(f_t,g_t,U_t)=\ind(f_t,g_t,C_t).
\]
Since the sets $W_t$ form an open cover for $I$, it follows by compactness of $I$ that $\ind(f_t,g_t,C_t)$ is the same for all $t\in I$.
\end{proof}

Together with Corollary \ref{cor:N-finite+leq}, we find that the Nielsen number is a lower bound for the number of coincidences among all pairs of maps homotopic to $(f,g)$, i.e.\ \[
N(f,g)\leq \MC(f,g)=\min\{\# \Coin(f',g') \mid f'\simeq f, g'\simeq g \}.
\]

\begin{rmk}\label{rmk:N-1v-fix}
In the single-valued setting ($n=1$), it follows from Remark \ref{rmk:fix-and-1valued-cc} and Remark \ref{rmk:1valued-index} that $N(f,g)$ is the single-valued Nielsen coincidence number of $f$ and $g$.

If $X=Y$ and $g$ is the identity map, it follows from Remark \ref{rmk:fix-and-1valued-cc} and Remark \ref{rmk:fix-index} that $N(f,g)$ coincides with the $n$-valued Nielsen fixed point number $N(f)$.
\end{rmk}

\subsection{Reidemeister and Lefschetz number}

In analogy with the single-valued case, we can also define:

\begin{df}
The \emph{Reidemeister number} $R(f,g)$ is the number of coincidence classes of the pair $(f,g)$ (also counting the empty ones).
\end{df}

Given lifts $\tilde{f}$ and $\tilde{g}$ with respective induced morphisms $\varphi$ and $\psi$ as in section \ref{subsec:cc}, we have $R(f,g)=R(\varphi,\psi)$, where $R(\varphi,\psi)$ is the number of equivalence classes for the relation $\sim_{\varphi,\psi}$.

In the single-valued setting, the following is often used as a characterisation of the Lefschetz number, but we can use it as a definition (and later, in Remark \ref{rmk:av-L}, we will deduce a characterisation more like the classical definition).

\begin{df}\label{def:L}
The \emph{Lefschetz number} of $(f,g)$ is $L(f,g)=\ind(f,g,X)$.
\end{df}

From the solution property (Corollary \ref{cor:index} (v)), we immediately get an analog of the Lefschetz fixed point theorem:

\begin{cor}
If $L(f,g)\neq 0$, then $f$ and $g$ have a coincidence point.
\end{cor}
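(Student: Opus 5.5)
The corollary follows directly from the solution property, Corollary \ref{cor:index} (v), applied with $U=X$. The plan has two short steps.

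First, I check that the triple $(f,g,X)$ is admissible, so that $L(f,g)=\ind(f,g,X)$ is defined at all. Admissibility asks that $\Coin(f,g,X)=\Coin(f,g)$ be compact. By Lemma \ref{lem:coin-closed}, $\Coin(f,g)$ is closed in $X$, since $Y$ is a manifold and hence Hausdorff. A closed subset of the closed manifold $X$ is compact, so the triple is admissible.

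Second, I apply the solution property. By Definition \ref{def:L}, the hypothesis $L(f,g)\neq 0$ means $\ind(f,g,X)\neq 0$. Corollary \ref{cor:index} (v) then gives $\Coin(f,g,X)=\Coin(f,g)\cap X\neq\varnothing$. So there is a point $x\in X$ with $g(x)\in f(x)$, which is a coincidence point of $(f,g)$.

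No step is a real obstacle. The only thing easy to overlook is admissibility, and Lemma \ref{lem:coin-closed} together with compactness of $X$ settles it.
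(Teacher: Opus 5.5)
Your proposal is correct and follows the paper's argument: the paper obtains this corollary immediately from the solution property (Corollary~\ref{cor:index}~(v)) applied to $U=X$, via $L(f,g)=\ind(f,g,X)$. Your admissibility check, which uses Lemma~\ref{lem:coin-closed} and compactness of $X$, is a detail the paper leaves implicit.
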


It follows respectively from Remark \ref{rmk:cc-homotopy} and from Theorem \ref{thm:index} (i) that $R(f,g)$ and $L(f,g)$ are homotopy invariants.
By the same observations as in Remark \ref{rmk:N-1v-fix}, $R(f,g)$ and $L(f,g)$ reduce to the known invariants in the fixed point setting and in the setting of single-valued maps.

\section{Two classical results}\label{sec:axioms}

The main results of this section are the Wecken property (Theorem \ref{thm:Wecken}), and the fact that the properties of Theorem \ref{thm:index} can be taken as axioms for the index (Theorem \ref{thm:unique-index}).
For both results, we will need that any pair of maps admits an (admissible) homotopy to a pair with a finite coincidence set.

\subsection{Reduction to finitely many coincidences} 

We will show:

\begin{thm}\label{thm:coin-finite}
If $U\subseteq X$ is an open set such that $\Coin(f,g,\partial U)=\varnothing$,\footnote{Note that in that case, the triple $(f,g,U)$ is automatically admissible: $\Coin(f,g,U)=\Coin(f,g)\cap \bar{U}$ is an intersection of compact sets, hence compact.} there exists a map $g':X\to Y$ homotopic to $g$ such that $\Coin(f,g',U)$ is finite and $(f,g)$ and $(f,g')$ are admissibly homotopic in $U$.
\end{thm}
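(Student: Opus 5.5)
The plan is to perturb only the single-valued map $g$, and to do so locally, using the fact that $f$ splits on small sets. Since $\Coin(f,g,\partial U)=\varnothing$ and $\partial U$ is compact, there is an open neighbourhood $N$ of $\partial U$ (in $X$) whose closure is disjoint from $\Coin(f,g)$. By compactness of $\Coin(f,g)$, choose a $\delta>0$ (with respect to a fixed metric on $Y$) such that $d(g(x),f(x))>\delta$ for all $x\in \bar N$. Here $d(g(x),f(x))$ means the minimum over the $n$ values of $f(x)$; it is continuous because $f$ is continuous into $D_n(Y)$.

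The key step is to construct $g'$ with $d(g,g')<\delta$ and $g'=g$ outside a compact neighbourhood of $\Coin(f,g,U)$ inside $U$, such that $\Coin(f,g',U)$ is finite.

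\begin{itemize}
\item Cover the compact set $\Coin(f,g,\bar U)=\Coin(f,g,U)$ by finitely many small open sets $V_1,\ldots,V_k\subseteq U\setminus \bar N$ that are closed cells in the triangulation. Choose them so that on each $V_j$ the map $f$ splits as $\{f^j_1,\ldots,f^j_n\}$, with images in disjoint Euclidean charts (by \cite[Proposition 2.2]{browngoncalves}), and so that $g(V_j)$ lies in a coordinate ball.
\item On each such piece, the problem reduces to single-valued coincidences of $(f^j_i,g)$ inside a chart of $Y\cong\RR^m$. There, coincidences are roots of $f^j_i-g$.
\item Proceed inductively over the pieces, as in the classical single-valued proof (e.g.\ \cite{goncalves-htfpt}, or via Schirmer's simplicial approximation). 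Modify $g$ on $V_j$ by a small perturbation, fixed near the boundary of $V_j$, so that each difference $f^j_i-g'$ has only finitely many zeros in $V_j$. For instance, make $g'$ simplicial-like or transverse, since $X$ and $Y$ have equal dimension, so generic zeros are isolated.
\item Because the $n$ branches are handled simultaneously on $V_j$, I will use a single perturbation that is generic for all $n$ branches at once. This is possible because finitely many genericity conditions are still generic.
\item Finiteness already achieved on earlier pieces is preserved on their overlaps if each subsequent perturbation is chosen small enough, keeping the modified region away from the previous finite coincidence set. Alternatively, use a nerve-ordered cell induction.
\end{itemize}

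Next comes the homotopy. Since $g'$ is $\delta$-close to $g$ and $Y$ is a manifold, the straight-line homotopy in charts (or geodesic homotopy) $g_t$ stays within $\delta$ of $g$. Moreover, $g_t=g$ on $\bar N$ and outside the modified region. Hence no coincidence of $(f,g_t)$ meets $\partial U$. Then $\Coin(F,G,U\times I)=\Coin(F,G)\cap(\bar U\times I)$ is compact, so $\{(f,g_t)\}$ is admissible in $U$, using Lemma~\ref{lem:coin-closed} for the homotopy pair.

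The main obstacle is the local genericity step: ensuring that a single small perturbation of $g$ on a cell simultaneously makes all $n$ branch-coincidence sets finite, without creating coincidences near cell boundaries or destroying finiteness already obtained on overlapping cells. I would first try Schirmer's simplicial approximation approach for $n$-valued maps, taking $g'$ simplicial in general position relative to the splitting factors. If that is awkward, I would fall back on a chart-wise transversality argument with sufficiently decreasing perturbation sizes.
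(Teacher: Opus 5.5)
There is a genuine gap, and it sits exactly in the step you flag as the main obstacle. The two justifications you give for that step fail for merely continuous maps.

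First, the claim that finiteness already achieved on earlier pieces survives later perturbations that are small enough is false. Having finitely many coincidences is not an open condition in the $C^0$ topology. Near any point where $g$ is close to a branch of $f$ (e.g.\ near an isolated coincidence), an arbitrarily small perturbation supported in a tiny ball can produce a whole continuum of coincidences. So a later perturbation on $V_j$ can destroy finiteness on $V_i\cap V_j$. Moreover, the coincidences of the old map that lie in the collar near $\partial V_j$, where you hold the map fixed, need not be finite at all.

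Second, for the same reason, the slogan that finitely many genericity conditions are still generic has no Baire-category content here. The condition that $f^j_i-g'$ has finitely many zeros is not open, so intersecting $n$ such conditions does not yield a dense set. A cell-by-cell argument would need a structured induction: for instance, first push all coincidences off the $(m-1)$-skeleton of a homogeneous subcomplex, then treat the resulting disjoint open top simplices. That is precisely the content of Schirmer's Satz IIa (Lemma~\ref{lem:schirmer-SatzIIa}), and you do not carry it out.

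The missing idea, which removes both obstacles, is to exploit the gap $\gamma(f)$. On the open set $U_0=\{x\mid \rho(f(x),g(x))<\gamma(f)/2\}$, exactly one value of $f(x)$ lies within $\gamma(f)/2$ of $g(x)$. This nearest branch is a continuous single-valued map $f_1\colon U_0\to Y$ on all of $U_0$ (Lemma~\ref{lem:f1}), with no simple connectivity of $U_0$ needed. Moreover, every $g'$ with $\rho(g,g')<\gamma(f)/2$ satisfies $\Coin(f,g',U_0)=\Coin(f_1,g',U_0)$, so the other $n-1$ branches never have to be controlled.

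The paper's proof then proceeds in four steps:
\begin{enumerate}
\item Choose one homogeneous $m$-dimensional subcomplex $A$ with $\Coin(f,g,U)\subseteq\mathring{A}\subseteq A\subseteq U\cap U_0$, and set $\eps=\min\{\gamma(f)/2,\rho(f,g,U\setminus A)\}$.
\item Apply Satz IIa once to the single-valued pair $(f_1,g)$ on $A$. This gives $g'$ with $g'=g$ on $\partial A$, $\rho(g,g')<\alpha(\eps)$ and $\Coin(f_1,g',A)$ finite.
\item Obtain an $\eps$-homotopy from $g$ to $g'$ via Hilfssatz~3 (Lemma~\ref{lem:schirmer-Hilfssatz3}).
\item The bound $\eps\le\rho(f,g,U\setminus A)$ keeps every coincidence of $(f,g_t)$ in $U$ inside $A$, which gives admissibility. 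The bound $\eps\le\gamma(f)/2$ gives $\Coin(f,g',U)=\Coin(f_1,g',A)$, which is finite.
\end{enumerate}

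Your treatment of the homotopy and of admissibility is fine in spirit. Note, though, that a geodesic homotopy is not available on a general triangulable manifold; a small-distance-implies-small-homotopy result such as Hilfssatz~3 is the right tool. Without the nearest-branch reduction, however, the finiteness step remains unproved.
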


\subsubsection{Local reduction to single-valued coincidence theory}

On a sufficiently small open neighborhood of the coincidence set, we will be able to split $f$ into a single-valued and an $(n-1)$-valued part, such that all coincidences with $g$ happen in the single-valued part.

Fix a metric $\rho$ on $Y$. Following \cite{schirmer1}, we define:

\begin{df}
For an $n$-valued map $f:X\to D_n(Y)$, the \emph{gap} of $f$ is \[
\gamma(f)=\inf\{\gamma(f(x))\mid x\in X \}
\]
where \[
\gamma(\{y_1,\ldots,y_n \})=\min\{\rho(y_i,y_j)\mid i\neq j \}.
\]
\end{df}

\noindent
Note that $\gamma(f)>0$ since $X$ is compact.

We will also use the notation \[
\rho(f(x),g(x))=\min\{\rho(y,g(x)) \mid y\in f(x) \}
\]
for $x\in X$.

\begin{lem}\label{lem:f1}
Define \[
U_0=\{x\in X \mid \rho(f(x),g(x))<\tfrac{\gamma(f)}{2} \}.
\]
There exists a single-valued map $f_1:U_0\to Y$ with $f_1(x)\in f(x)$ for all $x\in U_0$, such that $\Coin(f,g)=\Coin(f_1,g)$.
\end{lem}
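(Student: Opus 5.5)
The plan is to define $f_1(x)$ as the unique element of $f(x)$ closest to $g(x)$, and then check three things: uniqueness, continuity on $U_0$, and the equality of coincidence sets.

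\emph{Well-definedness.} Fix $x\in U_0$. By definition of $U_0$ there is some $y\in f(x)$ with $\rho(y,g(x))<\tfrac{\gamma(f)}{2}$. Suppose $y'\in f(x)$ also satisfies $\rho(y',g(x))<\tfrac{\gamma(f)}{2}$. The triangle inequality gives $\rho(y,y')<\gamma(f)\leq \gamma(f(x))$, which forces $y=y'$. So we set $f_1(x)$ to be this unique point. By construction $f_1(x)\in f(x)$. We also note that $U_0$ is open, since $x\mapsto \rho(f(x),g(x))$ is continuous: locally $f$ splits as $\{f_1,\ldots,f_n\}$ by \cite[Proposition 2.2]{browngoncalves}, and this function is then the minimum of the continuous functions $\rho(f_i(x),g(x))$.

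\emph{Continuity.} This is the main point to verify. Take $x_0\in U_0$ and a neighbourhood $W$ of $x_0$ on which $f=\{f^1,\ldots,f^n\}$ splits. Let $i$ be the index with $f^i(x_0)=f_1(x_0)$. The set
\[
W'=\{x\in W\cap U_0 \mid \rho(f^i(x),g(x))<\tfrac{\gamma(f)}{2}\}
\]
is open and contains $x_0$. For $x\in W'$, uniqueness forces $f_1(x)=f^i(x)$. Hence $f_1$ agrees with the continuous map $f^i$ on $W'$, so $f_1$ is continuous.

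\emph{Equality of coincidence sets.} If $x\in\Coin(f,g)$, then $\rho(f(x),g(x))=0<\tfrac{\gamma(f)}{2}$, so $x\in U_0$. Moreover $g(x)$ is the element of $f(x)$ at distance $0<\tfrac{\gamma(f)}{2}$ from $g(x)$, hence $f_1(x)=g(x)$. Conversely, if $x\in U_0$ and $f_1(x)=g(x)$, then $g(x)\in f(x)$. This gives $\Coin(f,g)=\Coin(f_1,g)$, where the latter is understood as a subset of $U_0$.
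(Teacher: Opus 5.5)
Your proof is correct and follows the paper's route: you take the unique value of $f(x)$ within $\tfrac{\gamma(f)}{2}$ of $g(x)$, prove local continuity through a splitting of $f$, and then compare the coincidence sets. The only difference is in the continuity step, where the paper argues that the graph of the factor over a \emph{connected} splitting neighbourhood cannot leave the open set $\{\rho(y,g(x))<\tfrac{\gamma(f)}{2}\}$, whereas you shrink the neighbourhood to where $\rho(f^i(x),g(x))<\tfrac{\gamma(f)}{2}$ still holds and invoke uniqueness. Your version needs no connectedness, and your check that $U_0$ is open supplies a fact the paper uses without comment.
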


\begin{proof}
If $x\in U_0$, there is a $y\in f(x)$ such that $\rho(y,g(x))<\frac{\gamma(f)}{2}$. Note that all other elements $y'\in f(x)$ satisfy $\rho(y',g(x))>\frac{\gamma(f)}{2}$. Indeed, \begin{align*}
\rho(y',g(x))&\geq \rho(y',y)-\rho(y,g(x))\\ &>\rho(y',y)-\tfrac{\gamma(f)}{2}\\ &\geq \gamma(f)-\tfrac{\gamma(f)}{2}=\tfrac{\gamma(f)}{2}.
\end{align*}
Let $f_1(x)$ be the unique element $y\in f(x)$ satisfying $\rho(y,g(x))<\frac{\gamma(f)}{2}$. We will show that the resulting function $f_1:U_0\to Y$ is continuous, by showing that any $x\in U_0$ has an open neighborhood $U_x\subseteq U_0$ on which $f_1$ is continuous.

For $x\in U_0$, let $U_x\subseteq U_0$ be a connected open neighborhood of $x$ on which $f$ splits; say $f=\{f'_1,\ldots,f'_n \}$. We may choose the order of the factors so that $f'_1(x)=f_1(x)$. Since $f'_1$ is a continuous function on a connected set, its graph is connected. Also, by the observation at the beginning of this proof, \[
\text{graph}(f,U_x)\vcentcolon=\{(x,y)\in U_x\times Y\mid y\in f(x) \} \subseteq V_1 \cup V_2
\]
where \begin{align*}
V_1&=\{(x,y)\in U_x\times Y\mid \rho(y,g(x))<\tfrac{\gamma(f)}{2} \} \\
V_2&=\{(x,y)\in U_x\times Y\mid \rho(y,g(x))>\tfrac{\gamma(f)}{2} \}.
\end{align*}
Since $V_1$ and $V_2$ are disjoint open sets, and $(x,f'_1(x))\in V_1$, the graph of $f'_1$ must be entirely contained in $V_1$. Thus, for all $x'\in U_x$, we know $f'_1(x')$ is the unique point $y\in f(x)$ with $\rho(y,g(x))<\tfrac{\gamma(f)}{2}$, i.e.\ $f'_1(x')=f_1(x')$. Since $f'_1$ is continuous, it follows that $f_1$ is continuous on $U_x$.

We clearly have $\Coin(f_1,g)\subseteq \Coin(f,g)$, as $f_1(x)\in f(x)$ for all $x$. Conversely, if $g(x)\in f(x)$, then $x\in U_0$ and $g(x)$ is the unique element $y\in f(x)$ with $\rho(y,g(x))<\frac{\gamma(f)}{2}$, so $g(x)=f_1(x)$.
\end{proof}

\subsubsection{Coincidence-finite homotopies}

To prove Theorem \ref{thm:coin-finite}, we will combine Lemma \ref{lem:f1} with an approximation result due to Schirmer \cite{schirmer-coin} for coincidences of single-valued maps. 

\begin{df}
Let $M$ and $N$ be manifolds, and let $\rho$ be a metric on $N$. For $\eps>0$, two maps $g,g':M\to N$ are called \emph{$\eps$-homotopic} (with respect to the metric $\rho$) if there is a homotopy $\{g_t:M\to N\mid t\in I \}$ from $g$ to $g'$ such that $\rho(g(x),g_t(x))<\eps$ for all $x\in M$ and $t\in I$.
\end{df}

\begin{lem}[{\cite[Hilfssatz 3]{schirmer-coin}}]\label{lem:schirmer-Hilfssatz3}
Let $N$ be a closed orientable triangulable manifold equipped with a metric $\rho$.\footnote{In \cite{schirmer-coin}, this is done with respect to a metric on $N$ induced by an embedding into a Euclidean space, but the proof is independent of the choice of metric.} 
For every $\eps>0$, there exists an $\alpha(\eps)>0$ such that for any closed orientable triangulable manifold $M$, any two maps $f,g:M\to N$ with $\rho(f,g)<\alpha(\eps)$ are $\eps$-homotopic.
\end{lem}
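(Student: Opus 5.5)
The idea is to realise $N$ as a neighbourhood retract in a Euclidean space and to use the straight-line homotopy followed by the retraction. The compactness of $N$ supplies uniform constants that do not depend on $M$, $f$ or $g$. The statement concerns only $N$ and its metric. The manifold $M$ enters only as the domain of the maps, so no property of $M$ beyond its being a topological space will be used.

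First, since $N$ is a closed triangulable manifold, it is a compact polyhedron and hence embeds in some $\RR^k$. Being a compact ENR, it is a retract of an open neighbourhood $W\subseteq\RR^k$; let $r:W\to N$ be a retraction. By compactness there is $\delta_0>0$ such that the closed $\delta_0$-neighbourhood $K=\{z\in\RR^k\mid d(z,N)\leq\delta_0\}$ lies in $W$, where $d$ is the Euclidean distance. The set $K$ is compact, so $r|_K$ is uniformly continuous as a map from $(K,d)$ to $(N,\rho)$.

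Next I would compare the metrics. On the compact space $N$, $\rho$ and the restricted Euclidean metric $d$ induce the same topology. Hence the identity map $(N,\rho)\to(N,d)$ is uniformly continuous, and so is its inverse. Given $\eps>0$, uniform continuity of $r|_K$ gives $\delta\in(0,\delta_0]$ such that $z\in K$, $y\in N$ and $d(z,y)<\delta$ imply $\rho(r(z),r(y))<\eps$. Since $r(y)=y$, this reads $\rho(r(z),y)<\eps$. Uniform equivalence of the metrics then gives $\alpha(\eps)>0$ such that $\rho(y,y')<\alpha(\eps)$ implies $d(y,y')<\delta$ for $y,y'\in N$.

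Now let $f,g:M\to N$ satisfy $\rho(f,g)<\alpha(\eps)$, so that $d(f(x),g(x))<\delta$ for all $x\in M$. Define
\[
g_t(x)=r\bigl((1-t)f(x)+tg(x)\bigr).
\]
The point $(1-t)f(x)+tg(x)$ lies within Euclidean distance $\delta\leq\delta_0$ of $f(x)\in N$, so it lies in $K\subseteq W$ and the formula makes sense. The homotopy is continuous, with $g_0=f$ and $g_1=g$. Applying the choice of $\delta$ with $y=f(x)$ gives $\rho(f(x),g_t(x))<\eps$ for all $x$ and $t$, so this is an $\eps$-homotopy from $f$ to $g$. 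By construction $\alpha(\eps)$ depends only on $N$, $\rho$ and $\eps$, which proves the lemma.

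The only point that needs care is making all constants uniform. Compactness of $N$ and of $K$ handles this: it gives both the uniform continuity of $r$ and the uniform equivalence of $\rho$ with $d$. This also explains the footnote's claim that the choice of metric on $N$ does not matter.
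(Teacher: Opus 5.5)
Your proof is correct. The paper gives no argument of its own; it only cites Schirmer's Hilfssatz 3, whose setting (per the footnote) is a metric coming from a Euclidean embedding. Your straight-line homotopy in a Euclidean embedding followed by a neighbourhood retraction, with the constants made uniform by compactness of $N$ and of the closed $\delta_0$-neighbourhood, is the standard proof of this statement. Your extra step comparing $\rho$ with the Euclidean metric through uniform equivalence on the compact space $N$ is exactly what justifies the footnote's claim that the choice of metric does not matter.
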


\begin{lem}[{\cite[Satz IIa]{schirmer-coin}}]\label{lem:schirmer-SatzIIa}
Let $(f,g):M\to N$ be a pair of maps between $m$-dimensional closed triangulable orientable manifolds, $N$ equipped with a metric $\rho$, and $A\subseteq M$ a homogeneous $m$-dimensional subcomplex (i.e., for some triangulation of $M$, $A$ is a collection of $m$-simplices and their boundaries; see also \cite[p.~127]{alexandroffhopf}). Suppose $f\neq g$ on $\partial A$. For any $\tau>0$, there exists a map $g':A\to N$ with $\rho(g,g')<\tau$ on $A$ and $g=g'$ on $\partial A$, such that all coincidence points of $f$ and $g'$ in $A$ are isolated (hence $\Coin(f,g',A)$ is finite, since $A$ is compact).\footnote{This lemma can clearly also be applied to maps $f$ and $g$ defined only on $A$, rather than on the whole manifold $M$.}
\end{lem}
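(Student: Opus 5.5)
The plan is to prove the lemma by a local linearisation argument. Inside each coordinate chart of $N$ we replace $g$ by a map whose difference with $f$ is piecewise linear and in general position with respect to $0$, so that coincidences are isolated by elementary linear algebra.

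\textbf{Step 1 (localising away from $\partial A$).} Since $\partial A$ is compact and $f\neq g$ there, there is $\delta>0$ with $\rho(f(x),g(x))\geq\delta$ for $x\in\partial A$. By uniform continuity the same bound (with $\delta/2$) holds on a closed neighbourhood $B$ of $\partial A$ in $A$. We may shrink $\tau$ so that $\tau<\delta/4$. Then any $g'$ with $\rho(g,g')<\tau$ has no coincidences with $f$ in $B$, and we only need to modify $g$ on $A\setminus B$.

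\textbf{Step 2 (fine subdivision and charts).} Cover $N$ by finitely many coordinate balls, identified with open subsets of $\RR^m$. Let $\lambda$ be a Lebesgue number for this cover. Subdivide the given triangulation of $A$ so finely that for every closed simplex $s$, the set $f(\operatorname{St}(s))\cup g(\operatorname{St}(s))$ has diameter less than $\lambda/2$. Then the images of $s$ and its star under both maps lie in a single chart $\kappa_s$, and the chart distances are comparable to $\rho$ up to uniform constants. Let $K$ be the subcomplex of simplices meeting the closed set $\{x\in A\setminus \operatorname{int}B : \rho(f(x),g(x))\leq\delta/4\}$; the subdivision is chosen so that $K$ is disjoint from $\partial A$.

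\textbf{Step 3 (the perturbation).} In the chart $\kappa_s$, write the difference as $d_s=\kappa_s g-\kappa_s f$. Choose values $v$ at the vertices of $K$ that approximate $d$ in the relevant chart within a small $\eta$, and that are in general position. Concretely, for every simplex of dimension $<m$, the affine span of its vertex values misses $0$, and for every $m$-simplex the vertex values are affinely independent. This is a finite set of open dense conditions, so it can be achieved. Extend linearly to get $d'_s$ on each simplex $s$ of $K$, and set $g'=\kappa_s^{-1}(\kappa_s f+d'_s)$ there. On the remaining simplices, interpolate back to $g$ by a cutoff on a collar of $K$. By Step 1 there are no coincidences on that collar, since $\rho(f,g)>\delta/4$ there and the perturbation is smaller than that.

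Coincidences of $f$ and $g'$ in $s$ are exactly the zeros of the affine map $d'_s$. By general position, each $m$-simplex contains at most one such zero, and it lies in its interior. Hence the coincidences are isolated. Moreover $g'=g$ on $\partial A$, and $\rho(g,g')<\tau$ once $\eta$ and the mesh are small enough.

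\textbf{Main obstacle.} The main obstacle is consistency. Neighbouring simplices may use different charts $\kappa_s$, so $d'_s$ must agree on common faces, and the transition maps are not linear. I would first try to avoid this by choosing one chart per vertex star and taking the interpolation in the chart of a fixed vertex of each simplex; whether that agrees on shared faces is unchecked. If it does not, I would resort to an embedding $N\subseteq\RR^k$ with a tubular retraction $r$. One takes the linear interpolation in $\RR^k$ and retracts, then checks that genericity of the vertex values still gives isolated solutions. The latter check uses the fact that $r$ is a submersion near $N$, so locally $r(L(x))=f(x)$ is a transverse equation in $m$ unknowns.
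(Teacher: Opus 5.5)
The paper gives no proof of this lemma; it quotes it from Schirmer. So your argument has to stand on its own, and it has a genuine gap exactly at the point you flag. Your whole mechanism is that, in one chart, $g'$ is defined so that $\kappa_s g'-\kappa_s f=d'_s$ is affine on each simplex. That property does not survive a change of chart.

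At a vertex shared by simplices with charts $\kappa_s\neq\kappa_{s'}$, ``the value of $d$ in the relevant chart'' is two different vectors. Even with agreed vertex data, $\kappa_s^{-1}(\kappa_s f+d'_s)$ and $\kappa_{s'}^{-1}(\kappa_{s'} f+d'_{s'})$ disagree on a common face, because the transition map is not affine. So $g'$ is not a well-defined continuous map on $K$, and your cutoff back to $g$ on the collar has the same defect. Taking the chart of a distinguished vertex of each simplex does not help: a common face of $s$ and $s'$ is still interpolated in the chart of $s$ from one side and in that of $s'$ from the other.

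The tubular-neighbourhood fallback fails for a more basic reason. With $g'=r\circ L$ the coincidence equation is $r(L(x))=f(x)$, and with $g'=r\circ(f+L)$ it is $L(x)\in\nu_{f(x)}N$. In both cases the merely continuous $f$ enters the equation, so there is no transversality to invoke, and genericity of the vertex values of $L$ only controls the PL side. (In dimension one, the corresponding claim that a generic affine function meets a given continuous function in finitely many points is false for typical continuous functions.) The point of your chart construction was precisely that $g'-f$ itself is PL, and that is what is lost.

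The missing idea is never to glue chart-wise PL data. Make every modification relative to the boundary of a single simplex, inside a single chart, and blend with a cutoff so that near that boundary the old difference is kept. Concretely:

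\begin{enumerate}
\item Take a subdivision so fine that each simplex on which $f$ and the current map $g$ have a coincidence has its $f$- and $g$-images in one chart. (Asking this for every simplex, as in your Step 2, is impossible where $f$ and $g$ are far apart.) Leave untouched every simplex on which $f\neq g$, in particular all of $\partial A$.
\item On a simplex $\sigma$ with $d=\kappa g-\kappa f$ and $|d|\geq\eps_\sigma>0$ on $\partial\sigma$, set $d'=\lambda d_1+(1-\lambda)d$. Here $d_1$ is a PL approximation of $d$ on a subdivision of $\sigma$ with generic vertex values and $|d_1-d|<\eps_\sigma/2$, and $\lambda$ is a cutoff equal to $1$ on $\{|d|\leq\eps_\sigma/2\}$ and to $0$ near $\partial\sigma$. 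Then $d'=d$ near $\partial\sigma$, $d'$ has no zeros where $\lambda<1$, and where $\lambda=1$ its zeros are those of $d_1$.
\item Do this first on the simplices of dimension $k<m$, by induction on $k$. There $d_1$ misses $0$, so $f\neq g'$ on the $(m-1)$-skeleton. Extend each small change over the higher simplices by coning in a chart, one simplex at a time.
\item Finally do it on each $m$-simplex. There your affine general-position count gives at most one zero per small simplex and none on its faces.
\end{enumerate}

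Since each step is relative to the boundary of the simplex treated, the pieces agree on common faces, $g'=g$ on $\partial A$, and $\rho(g,g')<\tau$ once the approximation constants are small enough.
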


\begin{proof}[Proof of Theorem \ref{thm:coin-finite}]
Let $U_0$ and $f_1:U_0\to Y$ be as in Lemma \ref{lem:f1} and its proof. Since $U\cap U_0$ is open and $\Coin(f,g,U)\subseteq U\cap U_0$ is compact, one can refine the triangulation of $X$ to obtain a homogeneous $m$-dimensional subcomplex $A$ (with $m=\dim X$) such that \[
\Coin(f,g,U)\subseteq \mathring{A} \subseteq A \subseteq U\cap U_0.
\]
Since $\Coin(f,g,\bar{U}\setminus \mathring{A})=\varnothing$ and $\bar{U}\setminus \mathring{A}$ is compact, \[
\begin{split}
\rho(f,g,U\setminus A)\vcentcolon=&\,\inf\{\rho(f(x),g(x))\mid x\in U\setminus A\}\\
\geq &\,\inf\{\rho(f(x),g(x))\mid x\in \bar{U}\setminus \mathring{A}\} >0.
\end{split}
\]
Define \[
\eps=\min\left\{\tfrac{\gamma(f)}{2},\,\rho(f,g,U\setminus A)\right\}.
\]
By Lemma \ref{lem:schirmer-SatzIIa} applied to the pair $(f_1,g):A\to Y$, there exists a map $g':A\to Y$ such that $\rho(g,g')<\alpha(\eps)$ (with $\alpha(\eps)$ as in Lemma \ref{lem:schirmer-Hilfssatz3}), $g'=g$ on $\partial A$, and $\Coin(f_1,g',A)$ is finite. Extend $g'$ to $X$ by defining $g'=g$ on $X\setminus A$. By Lemma \ref{lem:schirmer-Hilfssatz3}, there is an $\eps$-homotopy $\{g_t:X\to Y \mid t\in I\}$ with $g_0=g$ and $g_1=g'$.

We show that $(f,g)$ is admissibly homotopic to $(f,g')$ in $U$ and that $\Coin(f,g',U)$ is finite.

Since $\eps\leq \rho(f,g,U\setminus A)$, we have $\Coin(f,g_t,U)\subseteq A$ for all $t\in I$. Indeed, if $x\in U\setminus A$, then \begin{align*}
\rho(f(x),g_t(x)) &\geq\rho(f(x),g(x))-\rho(g(x),g_t(x)) \\
&>\rho(f,g,U\setminus A)-\eps \geq 0.
\end{align*}
In particular, if we write $(F,G)=\{(f,g_t) \mid t\in I \}$ (i.e.\ $F(x,t)=f(x)$ and $G(x,t)=g_t(x)$ for all $x\in X$, $t\in I$), then \[
\Coin(F,G,U\times [0,1])=\Coin(F,G)\cap (A\times [0,1])
\]
is compact, being an intersection of closed sets in $X\times [0,1]$. Thus, the homotopy $\{(f,g_t) \mid t\in I \}$ is admissible in $U$.

On the other hand, since $\eps\leq \frac{\gamma(f)}{2}$, we have $\Coin(f,g',A)=\Coin(f_1,g',A)$. Indeed, if $x\in A$ then $x\in U_0$ and \[
\rho(g'(x),g(x))<\eps\leq \tfrac{\gamma(f)}{2},
\] 
so if $g'(x)\in f(x)$, then $g'(x)$ is the unique $y\in f(x)$ with $\rho(y,g(x))<\frac{\gamma(f)}{2}$, i.e.\ $g'(x)=f_1(x)$.

Together, \[
\Coin(f,g',U)=\Coin(f,g',A)=\Coin(f_1,g',A),
\]
which was finite.
\end{proof}

In the special case $U=X$, we get:

\begin{cor}\label{cor:coin-finite}
For any $n$-valued map $f:X\to D_n(Y)$ and for any map $g:X\to Y$, there exists a map $g':X\to Y$ homotopic to $g$ such that $\Coin(f,g')$ is finite.
\end{cor}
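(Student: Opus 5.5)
This is the special case $U=X$ of Theorem \ref{thm:coin-finite}, so the plan is to check that its hypotheses hold and then read off the conclusion.

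\emph{Step 1: the hypothesis.} Since $X$ is a closed manifold, $U=X$ is open in $X$ and its topological boundary is empty, so $\partial X=\varnothing$. Hence $\Coin(f,g,\partial X)=\varnothing$ trivially.

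\emph{Step 2: admissibility.} $(f,g,X)$ is admissible: $\Coin(f,g)$ is closed by Lemma \ref{lem:coin-closed}, and therefore compact in the compact space $X$.

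\emph{Step 3: applying the theorem.} Theorem \ref{thm:coin-finite} now gives a map $g':X\to Y$ homotopic to $g$ such that $\Coin(f,g',X)=\Coin(f,g')$ is finite. (It moreover says $(f,g)$ and $(f,g')$ are admissibly homotopic in $X$, which we do not need here.) The homotopy $g\simeq g'$ is the $\eps$-homotopy built in that proof.

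\emph{Checking the degenerate cases.} The only thing to watch is that the proof of Theorem \ref{thm:coin-finite} still goes through when $U=X$.

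If $\Coin(f,g)=\varnothing$, take $g'=g$; there is nothing to prove.

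Otherwise, the proof chooses a homogeneous $m$-dimensional subcomplex $A$ with $\Coin(f,g)\subseteq\mathring A\subseteq A\subseteq U_0$. It then needs $\rho(f,g,X\setminus A)>0$, and this holds because $\Coin(f,g)$ does not meet the compact set $X\setminus\mathring A$. Both points are covered by the general argument.

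So I expect no real obstacle: the corollary is a direct specialisation.
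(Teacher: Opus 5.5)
Your proposal is correct and is exactly the paper's argument: the paper obtains the corollary simply by taking $U=X$ in Theorem \ref{thm:coin-finite}, where $\partial X=\varnothing$ makes the hypothesis vacuous. Your extra checks (admissibility, the empty-coincidence case, positivity of $\rho(f,g,X\setminus A)$) are harmless but not needed, since the theorem applies verbatim with $U=X$.
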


\subsection{The Wecken property}

We will show:

\begin{thm}\label{thm:Wecken}
Let $X$ and $Y$ be closed orientable triangulable manifolds of equal dimension $m\geq 3$. For any two maps $f:X\to D_n(Y)$ and $g:X\to Y$, there are maps $f':X\to D_n(Y)$ homotopic to $f$ and $g':X\to Y$ homotopic to $g$ with $\# \Coin(f',g')=N(f,g)$. In other words, $N(f,g)=\MC[f,g]$.
\end{thm}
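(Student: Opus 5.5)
By Corollary \ref{cor:N-finite+leq} and Theorem \ref{thm:hom-inv} we already have $N(f,g)\leq \MC[f,g]$. It therefore suffices to deform $(f,g)$ until each essential coincidence class consists of exactly one point and each inessential class is empty. The plan follows the classical Wecken argument (as in Schirmer's proofs for single-valued coincidences and for $n$-valued fixed points), keeping $f$ fixed and deforming only $g$ locally. This is possible because Theorem \ref{thm:MC} allows us to move either map freely.

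\emph{Step 1 (finiteness).} By Corollary \ref{cor:coin-finite}, we may assume that $\Coin(f,g)$ is finite. Each coincidence point $x$ then has an index $\ind(f,g,x)=\ind_1(f_i,g,x)$, computed from a local splitting as in Remark \ref{rmk:local-index}.

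\emph{Step 2 (merging points of one class).} Let $x,x'$ be two coincidence points in the same class. By Theorem \ref{thm:cc-paths} there is a path $c$ from $x$ to $x'$ such that, writing $fc=\{c_1,\ldots,c_n\}$, some $c_i$ is homotopic to $gc$. Since $m\geq 3$, we may perturb $c$ so that it is an embedded arc avoiding the other coincidence points. Choose a thin tubular neighbourhood $B\cong D^m$ of $c(I)$. Since $B$ is contractible, $f$ splits on $B$ as $\{f_1,\ldots,f_n\}$, with $f_i\circ c=c_i$. On $B$, the coincidences of $(f,g)$ are exactly those of the pairs $(f_j,g)$, and only the points $x,x'$ occur, both coming from the factor $f_i$ (by Lemma \ref{lem:unique-lift}, the factor through which a point of the class arises is determined along the path). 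We now apply Schirmer's single-valued uniting lemma for coincidences (as in \cite{schirmer-coin}) to $(f_i,g)$ on $B$. Because $f_i c\simeq gc$ and $m\geq3$, we obtain a homotopy of $g$, constant outside $B$, after which $(f_i,g)$ has a single coincidence point in $B$. This happens inside the sector $U_0$ of Lemma \ref{lem:f1}.

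\emph{Step 3 (controlling the other factors).} The main obstacle is to ensure that the new $g$ acquires no coincidences with the other factors $f_j$, $j\neq i$, on $B$. We handle this by first homotoping $g$ near $c(I)$ so that $gc$ becomes the path $c_i$ itself. This is possible since $c$ is an embedded arc in dimension $\geq 3$, the homotopy $gc\simeq c_i$ rel endpoints can be realised by a homotopy of $g$ supported near $c$, and coincidences created along the way lie near $c(I)$ only with $f_i$. After this, $g$ is uniformly close to $f_i$ on $B$, hence at distance $>\gamma(f)/2$ from the other factors. Schirmer's construction then only moves $g$ by less than $\gamma(f)/2$ near the arc, as in the proof of Theorem \ref{thm:coin-finite}, using $\eps\leq\gamma(f)/2$. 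Repeating Step 2 leaves one point per nonempty class; the index of that point equals the index of the class by additivity (Theorem \ref{thm:index}(ii)).

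\emph{Step 4 (removing inessential points).} Let $x$ be a single point with $\ind(f,g,x)=\ind_1(f_i,g,x)=0$. Choose a small ball $B$ around $x$ on which $f$ splits and $g$ is $\gamma(f)/2$-close to $f_i$ only. The single-valued fact that a coincidence of index $0$ in a ball of dimension $\geq 1$ can be removed by a homotopy rel $\partial B$ (a map $S^{m-1}\to S^{m-1}$ of degree $0$ extends over $D^m$) gives $g'$ with no coincidence with $f_i$ on $B$. Keeping this deformation small ensures that no coincidences with the $f_j$, $j\neq i$, appear. The resulting pair has exactly $N(f,g)$ coincidences.
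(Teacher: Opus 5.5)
There is a genuine gap in Step~3, and it sits exactly where the $n$-valued difficulty lies. You assert that the rel-endpoint homotopy $gc\simeq c_i$ ``can be realised by a homotopy of $g$ supported near $c$'' and that the coincidences it creates occur ``only with $f_i$''. Nothing guarantees this. An arbitrary path homotopy $H\colon I\times I\to Y$ from $gc$ to $c_i$ may satisfy $H(s,t)=c_j(s)$ for some $(s,t)$ and some $j\neq i$. The deformed $g$ then acquires coincidences with the factor $f_j$ at $c(s)$. Off the arc, an uncontrolled extension can also create further coincidences with any factor, possibly in other classes. The embeddedness of $c$ does not help with this. The same issue affects your remark in Step~2 that everything happens inside $U_0$ of Lemma~\ref{lem:f1}. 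Before any deformation, $gc(s)$ and $c_i(s)$ may be far apart for $0<s<1$, so the arc need not lie in $U_0$, and Schirmer's local results cannot yet be applied.

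What is missing is a general-position argument together with a controlled extension.
\begin{itemize}
\item Since $\dim(I\times I)=2<m$, Schirmer's Coincidence Lemma lets you perturb $H$ rel $\partial(I\times I)$ so that $H(s,t)\neq c_j(s)$ for all $j\neq i$ and all $(s,t)$.
\item You should aim at a path $\omega$ near the target whose coincidences with it occur only at $s=0,1$, rather than at $c_i$ itself, which would produce a whole arc of coincidences. Then $H$ can also be perturbed to create no interior coincidences with the $i$-th factor.
\item You then need a special homotopy extension property, as in Lemma~\ref{lem:special-homotopy-extension}, to extend the homotopy from $c(I)$ to $X$ without changing the coincidence set anywhere.
\end{itemize}
Only after this do you get a thin strongly connected neighbourhood on which Lemma~\ref{lem:schirmer-SatzVIIIa} applies while the other factors stay at a positive distance.

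This is precisely the paper's Lemma~\ref{lem:f-to-omega}. The paper moves the factor $c_1$ of $f$ to a path $\omega$ close to $gc$, keeping $g$ and $\Coin(f,g)$ fixed, and only then deforms $g$ locally. Your mirror version, which moves only $g$, would also work once these two ingredients are supplied. The extension lemma for $g$ is proved the same way: for $x$ in the coincidence set, $G(x,t)\in f(x)$ for all $t$, and $f(x)$ is discrete, so $G(x,t)$ is constant in $t$.

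Finally, your appeal to Theorem~\ref{thm:MC} is misplaced, since it converts homotopies of $g$ into homotopies of $f$, not conversely. It is also unnecessary, because $\MC[f,g]$ allows deforming both maps.
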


For the remainder of this section, let $X$, $Y$, $f$ and $g$ be as in the theorem, fix a metric $\rho$ on $Y$, and suppose $p,q\in X$ are two isolated coincidence points in the same coincidence class of $(f,g)$. By Theorem \ref{thm:cc-paths}, there is a path $c:I\to X$ from $p$ to $q$ such that $fc=\{c_1,\ldots,c_n\}$, where one of the paths $c_i$ is homotopic to $gc$. By reordering the factors, we may assume $i=1$.

\begin{lem}
There is a path $c':I\to X$ homotopic to $c$, a neighborhood $\bar{U}$ of $c'(I)$ with $\Coin(f,g,\bar{U})=\{p,q\}$, and a homeomorphism $\varphi:\bar{U}\to \bar{V}\subseteq \RR^m$ such that $\varphi(c'(I))$ is a line segment in $\RR^m$ and $\bar{V}$ is an $\eps$-neighborhood of that line segment.
\end{lem}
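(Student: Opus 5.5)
We follow the classical Wecken construction in the single-valued setting, as in Schirmer's \cite{schirmer-coin} and Jiang \cite{jiang}, using $\dim X=m\geq 3$ to put the path in general position. Since the coincidence points of $(f,g)$ near $p$ and $q$ are isolated and $\Coin(f,g)$ is compact, we may choose small disjoint coordinate balls $B_p,B_q$ around $p$ and $q$ with $\Coin(f,g,B_p)=\{p\}$ and $\Coin(f,g,B_q)=\{q\}$. After a homotopy of $c$ rel endpoints we first arrange that $c$ is a piecewise linear (with respect to the triangulation of $X$) embedded arc. This is possible since $m\geq 3$: an arc is one-dimensional, so a generic PL perturbation rel endpoints removes self-intersections (which generically occur only when $2\cdot 1<m$ fails, i.e.\ never for $m\geq 3$). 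We also arrange that $c$ leaves $B_p$ and enters $B_q$ radially.

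The next step is to avoid the remaining coincidences. $\Coin(f,g)$ may be infinite, but we only need it to consist of isolated points in the relevant region. The statement only asserts $\Coin(f,g,\bar U)=\{p,q\}$, so I would first check whether, in context, all coincidences are isolated (e.g.\ after Corollary \ref{cor:coin-finite} one may assume $\Coin(f,g)$ finite). If only $p,q$ are assumed isolated, I would still need the other coincidences to be avoidable, and I would use the fact that $\Coin(f,g)$ is closed (Lemma \ref{lem:coin-closed}). Assuming $\Coin(f,g)$ is finite, a general position perturbation of the interior of $c$ rel endpoints makes $c(I)\cap \Coin(f,g)=\{p,q\}$; a point has codimension $m\geq 3>1$, so this is easy. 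Call the resulting arc $c'$. Homotopic paths rel endpoints preserve the property that $c'_1\simeq gc'$: splitting $f$ along a homotopy $I\times I\to X$ (the square is simply connected, so $f$ splits there) shows that the factor starting at $f_1(p)$ stays homotopic to $gc'$.

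Finally we build the chart. A tame (PL) embedded arc in a PL manifold of dimension $m\geq 3$ has a regular neighborhood, which is a PL $m$-ball $\bar U$ containing $c'(I)$ in its interior. This ball is homeomorphic to a standard neighborhood of a segment, with the arc going to the segment; this is the unknottedness of arcs, which holds for PL arcs in any dimension by regular neighborhood theory. We choose $\bar U$ inside the complement of $\Coin(f,g)\setminus\{p,q\}$, which is open. Then $\Coin(f,g,\bar U)=\{p,q\}$, and composing with a homeomorphism of the ball onto the $\eps$-neighborhood $\bar V$ of a segment in $\RR^m$ (for example, straightening the ball by a radial homeomorphism fixing the segment) gives $\varphi$.

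The main obstacle is the honest straightening step: making precise that an embedded arc has a closed neighborhood homeomorphic to a tubular neighborhood of a straight segment, with the arc mapping to the segment. I would handle this via PL regular neighborhood theory (the arc collapses to a point, so its regular neighborhood is a ball, and the pair (ball, arc) is standard), rather than by an ad hoc construction. A secondary point is ensuring the isolation hypotheses needed to miss other coincidences.
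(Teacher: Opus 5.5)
Your argument is correct and is essentially the classical argument the paper relies on when it cites Schirmer's Hilfssatz~7 and calls the $n$-valued case analogous: use general position for $m\geq 3$ to replace $c$ by a homotopic embedded arc meeting $\Coin(f,g)$ only in $p,q$, then take a small standard (regular) neighbourhood of that arc. The $n$-valuedness only changes which finite set must be avoided and requires that $c_1\simeq gc$ be preserved, which you handle correctly by splitting $f$ over the homotopy square. You are also right that the statement needs all coincidences, not just $p$ and $q$, to be isolated; in the paper this comes from the reduction to a finite $\Coin(f,g)$ via Corollary~\ref{cor:coin-finite} at the start of the proof of Theorem~\ref{thm:Wecken}, which is where the lemma is used.
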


\begin{proof}
For $n=1$ this is \cite[Hilfssatz 7]{schirmer-coin}. The $n$-valued case is analogous.
\end{proof}

Since the property of $gc$ being homotopic to $c_1$ is invariant under a homotopy of the path $c$, we may replace $c$ by $c'$ (but we keep writing $c$). By reparametrizing if necessary, we may further assume that $c$ is an embedding. 

From $\Coin(f,g,\bar{U})=\{p,q\}$, it follows in particular that $\Coin(gc,c_1)=\{0,1\}$ and $\Coin(gc,c_i)=\varnothing$ for all $i\geq 2$. Since $I$ is compact, there is an $\eta>0$ such that $\rho(gc,c_i)>\eta$ for all $i\geq 2$.

\medskip

The rest of the proof roughly goes as follows: we first want to deform $f$ into a map $f'$, under which $c_1$ becomes a path $\omega$ very close to $gc$. Lemma \ref{lem:f-to-omega} assures that this can be done without changing the coincidence set (here the assumption $m\geq 3$ is essential). When $gc$ and $\omega$ (now a factor of $f'c$) lie close together, we can find a small neighborhood of $c$ where $g$ and a factor of $f'$ lie close together, on which we can apply a result from Schirmer's single-valued coincidence theory (Lemma \ref{lem:schirmer-SatzVIIIa}) to deform to either no coincidences or one coincidence point with non-zero index (without changing the maps on the rest of $X$). By applying this to every pair of coincidence points in the same class, we reduce each class to either an empty set or a single point with non-zero index.

\begin{figure}[h!]
\begin{tikzpicture}[scale=2]
\draw (0,-0.1) edge[bend right] node[below]{$gc$} (1,0.1);
\draw (0,-0.1) edge[bend left=60] node[above]{$c_1\subseteq fc$} (1,0.1);

\draw[->] (1.5,0) -- node[above]{$f\simeq f'$} node[below]{$\substack{\text{without} \\ \text{changing} \\ \Coin(f,g)}$} (2,0);

\draw[line width=3mm,line cap=round,gray!50!white] (2.525,-0.08) edge[bend right] (3.475,0.11);
\draw (2.5,-0.1) edge[bend right] node[below, yshift=-0.5mm]{$gc$} (3.5,0.1);
\draw (2.5,-0.1) to[out=60,in=165] (2.6,-0.08) to[out=-15,in=215] node[above,xshift=-2mm,yshift=0.5mm]{$\omega\subseteq f'c$} (3.4,0.08) to[out=35,in=120] (3.5,0.1);

\draw[->] (4,0) -- node[above]{$g\simeq g'$} node[below]{$\substack{\text{on small} \\ \text{neighborhood}}$} (4.5,0);

\draw[line width=3mm,line cap=round,gray!50!white] (5.025,0.4) edge[bend right] (5.975,0.6);
\draw (5,0.38) edge[bend right] node[below]{$g'c$} (6,0.58);
\draw (5.01,0.42) edge[bend right] node[above]{$\omega$} (5.97,0.62);

\draw (4.9,0.1) node{or};

\draw[line width=3mm,line cap=round,gray!50!white] (5.025,-0.2) edge[bend right] (5.975,0);
\draw (5.02,-0.175) edge[bend right] node[below right,yshift=1mm]{$g'c$} (6,-0.02);
\draw (5.01,-0.225) edge[bend right=25] node[above left,xshift=-3mm]{$\omega$} (5.97,0.025);
\end{tikzpicture}
\caption{Idea of the proof of Theorem \ref{thm:Wecken}.}
\end{figure}
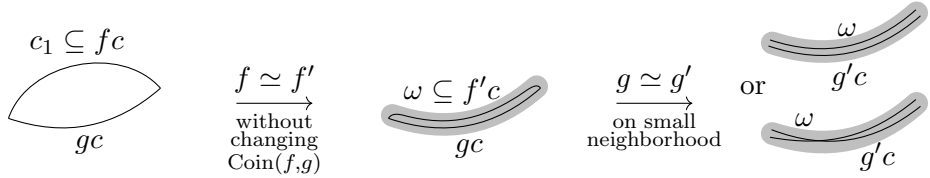

\subsubsection{Main lemma}

With the above notations, we will show:

\begin{lem}\label{lem:f-to-omega}
For any path $\omega:I\to Y$ homotopic to $gc$ with $\Coin(\omega,gc)=\{0,1\}$ and $\Coin(\omega,c_i)=\varnothing$ for all $i\geq 2$, there is a map $f':X\to D_n(Y)$ homotopic to $f$ with $\Coin(f',g)=\Coin(f,g)$, such that $f'c=\{\omega,c_2,\ldots,c_n\}$.
\end{lem}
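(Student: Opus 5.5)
The plan is to change only the first factor of $f$ along $c$, and to do it only on a small tubular neighbourhood of $c(I)$. This mirrors Schirmer's single-valued argument (\cite[Hilfssatz 8]{schirmer-coin}).

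\emph{Step 1: the homotopy of paths.} Since $\omega\simeq gc\simeq c_1$ relative to the endpoints, there is a homotopy $H:I\times I\to Y$ with $H(s,0)=c_1(s)$, $H(s,1)=\omega(s)$, $H(0,t)=c_1(0)=g(p)$ and $H(1,t)=c_1(1)=g(q)$. We need $H$ to satisfy two avoidance conditions:
\begin{itemize}
\item[(a)] $H(s,t)\neq c_i(s)$ for $i\geq 2$, so that together with $c_2,\ldots,c_n$ it still defines an $n$-valued map on $c(I)\times I$;
\item[(b)] $H(s,t)\neq gc(s)$ for $0<s<1$, so that no new coincidences are created along $c$.
\end{itemize}
Both conditions concern the graph of the map $(s,t)\mapsto H(s,t)$ in $I\times I\times Y$. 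This graph is $2$-dimensional. It must avoid the $1$-dimensional sets $\{(s,t,c_i(s))\}\times I$, which are $2$-dimensional inside an $(m+2)$-dimensional space, and the set $\{(s,t,gc(s))\}$ over the interior of $I\times I$. The boundary data already satisfy (a) and (b), by the hypotheses on $\omega$ and because $\Coin(gc,c_1)=\{0,1\}$. By a general position argument relative to the boundary, namely a small perturbation in the interior, we can achieve both conditions when $2+2<m+2$, i.e.\ $m>2$, which holds since $m\geq 3$. I expect this general position step to be the main obstacle. It has to be carried out in the manifold $Y$ rather than in $\RR^m$, by working in small coordinate charts, and the perturbation has to be controlled near the corners $s=0,1$, where $H$ is pinned to $g(p)$ and $g(q)$. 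Near these corners I would first modify $H$ on a collar so that it runs along the ``radial'' direction away from $gc$ before perturbing.

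\emph{Step 2: extension to a neighbourhood.} Use the neighbourhood $\bar U$ and homeomorphism $\varphi:\bar U\to\bar V$ from the preceding lemma, so that $\bar U$ is a tube around the segment $c(I)$. Shrinking $\bar U$, $f$ splits on $\bar U$ as $\{f_1,\ldots,f_n\}$ with $f_ic=c_i$, because $\bar U$ is contractible. Choose a retraction $r:\bar U\to c(I)$ coming from the orthogonal projection onto the segment, and a radial function $\lambda:\bar U\to I$ that equals $1$ on $c(I)$ and $0$ on $\partial \bar U$. Define $f'_1$ on $\bar U$ by first homotoping $f_1$ (radially) to $f_1$ composed with the nearby homotopy data, and then setting
\[
f'_1(x)=H\bigl(c^{-1}(r(x)),\lambda(x)\bigr)
\]
in a thinner inner tube, interpolated with $f_1$ in the outer shell. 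More carefully, I would first use a standard straight-line-in-chart deformation so that, on a thin tube $W$, $f_1$ agrees with $c_1\circ c^{-1}\circ r$, and $g$ is close to $gc\circ c^{-1}\circ r$. Then on $W$ I replace $c_1$ by the layers $H(\cdot,\lambda)$. Let $f'=\{f'_1,f_2,\ldots,f_n\}$ on $\bar U$ and $f'=f$ outside $\bar U$. The homotopy $f\simeq f'$ is obtained by replacing $\lambda$ with $t\lambda$.

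\emph{Step 3: the coincidence and validity checks.} By (a), and because $W$ can be chosen so thin that the values $\rho(f_i,f'_1)$ stay positive by compactness, the map $f'$ is genuinely $n$-valued throughout the homotopy. Coincidences with $f_i$ for $i\geq 2$ are unchanged. Coincidences with $f'_1$ inside $W$ are excluded by (b) together with a compactness margin: the continuous function $\rho(H(c^{-1}r(x),\lambda(x)),g(x))$ is positive on the part of $W$ away from $p$ and $q$, provided $W$ is thin enough. Near $p$ and $q$ we arrange $f'_1=f_1$, so that $\Coin$ there is still $\{p,q\}$. Hence $\Coin(f',g)=\Coin(f,g)$. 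Finally, $f'c=\{H(\cdot,1),c_2,\ldots,c_n\}=\{\omega,c_2,\ldots,c_n\}$, which is what the lemma asserts.
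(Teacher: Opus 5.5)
Your Step 1 is fine and matches the paper, which gets it from two applications of Schirmer's Coincidence Lemma. The paper moves the factors $c_i$, $i\geq 2$, off $H_1$ instead of moving $H$ off the $c_i$. It then moves $H_1$ off $gc$ relative to the sides $s\in\{0,1\}$, and this relative version already handles the corners you were worried about.

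The genuine gap is in Steps 2 and 3, at the endpoints $p$ and $q$. Your compactness margin only works over $c([\varepsilon,1-\varepsilon])$. Near $p$ and $q$ both $\rho(f_1,g)$ and $\rho(H(s,t),gc(s))$ tend to $0$, so no thin tube excludes new coincidences there. Your remedy, arranging $f'_1=f_1$ on a neighbourhood of $p$ and $q$, contradicts the required conclusion $f'_1c=\omega$. It would force $\omega(s)=c_1(s)$ for all $s$ near $0$ and $1$, which is not assumed, and fails in the application, where $\omega$ is a small perturbation of $gc$. The preliminary straight-line deformation making $f_1=c_1\circ c^{-1}\circ r$ on $W$ does not preserve coincidences either. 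On the part of the tube beyond the endpoint $p$, orthogonal projection gives $r(x)=p$. So the deformed $f_1$ is the constant $c_1(0)=g(p)$ there, and every point of $g^{-1}(g(p))$ in that half-ball becomes a coincidence (the whole half-ball if $g$ is constant near $p$).

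What is missing is a way to extend the homotopy from $A=c(I)$ to $X$ that creates no coincidences anywhere, in particular near $p$ and $q$. The paper uses an $n$-valued version of Jiang's special homotopy extension property:
\begin{itemize}
\item extend $F(c(s),t)=\{H(s,t),c_2(s),\ldots,c_n(s)\}$ arbitrarily to $\tilde F':X\times I\to D_n(Y)$ with $\tilde F'(-,0)=f$, using a retraction $X\times I\to X\times\{0\}\cup A\times I$;
\item let $C=\{x\in X\mid g(x)\in \tilde F'(\{x\}\times I)\}$, which is closed;
\item put $\tilde F(x,t)=\tilde F'\bigl(x,t(1-d(x,A)/d(x,C))\bigr)$ if $d(x,A)<d(x,C)$, and $\tilde F(x,t)=f(x)$ otherwise.
\end{itemize}
A point with $d(x,A)<d(x,C)$ lies outside $C$, so neither its track nor $f(x)$ contains $g(x)$. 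Every other point keeps the value $f(x)$. Hence $\Coin(\tilde F(-,t),g)=\Coin(f,g)$ for all $t$. Continuity at $A\cap C=\{p,q\}$ holds because $F(p,t)$ and $F(q,t)$ do not depend on $t$, which your Step 1 guarantees.

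Substituting this for your tube construction completes the proof. It also makes the splitting of $f$ on $\bar U$, the retraction $r$, the function $\lambda$ and all thinness estimates unnecessary.
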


We use the following lemma due to Schirmer. (Note that in our setting, manifolds are always closed, so one may replace $\mathring{N}$ by $N$.)

\begin{lem}[Coincidence Lemma {\cite[Lemma 2.2]{schirmer3}}]
Let $N$ be a compact manifold of dimension $m\geq 3$ equipped with a metric $\rho$, and $P$ a compact polyhedron of dimension $p<m$, with subpolyhedra $P_0\subseteq P_1\subseteq P$. For any $\eps>0$ and for any pair of maps $f,g:P\to \mathring{N}$ such that \[
f=g \text{ on } P_0,\quad f\neq g \text{ on } P_1\setminus P_0,
\]
there exists a map $f':P\to \mathring{N}$ such that \begin{itemize}
\item $f'=f$ on $P_1$
\item $f'\neq g$ on $P\setminus P_0$
\item $\rho(f,f')<\eps$.
\end{itemize}
\end{lem}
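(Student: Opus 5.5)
The plan is a general-position argument carried out relative to $P_1$, skeleton by skeleton, with perturbations that shrink as one approaches $P_1$.

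\textbf{Setup.} Triangulate $P$ so that $P_0$ and $P_1$ are subcomplexes. Define $D=\{x\in P\mid f(x)=g(x)\}$, which is closed and contains $P_0$. Fix a continuous function
\[
\delta:P\to[0,\eps/2]
\]
with $\delta^{-1}(0)=P_1$, for instance a truncated multiple of $\operatorname{dist}(\,\cdot\,,P_1)$. Choose a Whitehead-type subdivision $K$ of $P\setminus P_1$ whose simplices become finer as they approach $P_1$, chosen so that:
\begin{itemize}
\item[(a)] each simplex $\sigma$ has diameter much smaller than $\operatorname{dist}(\sigma,P_1)$;
\item[(b)] $f(\sigma)$ and $g(\sigma)$ lie in a single coordinate ball $B_\sigma\cong\RR^m$ of $\mathring N$, chosen small relative to $\min_\sigma\delta$.
\end{itemize}
The map $f'$ will be built on $P\setminus P_1$ so that $\rho(f',f)\le\delta$ pointwise. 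Then $f'$ extends continuously by $f$ on $P_1$, and $\rho(f,f')<\eps$ holds automatically.

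\textbf{Inductive step.} Construct $f'$ on the skeleta of $K$ by induction on dimension. Suppose $f'$ is already defined on $\partial\sigma$ with $f'\neq g$ there and $f'$ close to $f$. In the chart $B_\sigma$, consider the difference $d=f'-g$ on $\partial\sigma$; it takes values in $\RR^m\setminus\{0\}$. Since $\dim\sigma\le p\le m-1$, a generic PL (or smooth) extension of $f'|_{\partial\sigma}$ that stays close to $f|_\sigma$ has $f'-g$ avoiding $0$. One way to see this is to take a small generic perturbation of $f|_\sigma$ rel $\partial\sigma$ in the linear structure of $B_\sigma$: the preimage of the point $0$ under a generic map from a $p$-dimensional simplex to $\RR^m$ with $p<m$ is empty. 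Because $d\neq0$ on the compact set $\partial\sigma$, we can also keep the perturbation smaller than $\min_{\partial\sigma}|d|$ near the boundary, so the new values agree with the old ones on $\partial\sigma$. Choosing the perturbation size below $\min_\sigma\delta$ keeps the bound $\rho(f',f)\le\delta$ on $\sigma$.

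\textbf{Expected obstacle.} The main difficulty is continuity and the condition $f'\neq g$ near $P_0$. In a neighbourhood of $P_0$, the quantity $|f-g|$ tends to $0$, so perturbations must both tend to $0$ (for continuity with $f=f'$ on $P_1$) and still push $f'$ off $g$. The infinite subdivision addresses this: each simplex of $K$ is compact and disjoint from $P_1$, so every smallness requirement imposed on it is a positive number, and properties (a) and (b) force all perturbations to $0$ at $P_1$. On $P_1\setminus P_0$ we have $f'=f\neq g$ by hypothesis, and off $P_1$ the induction gives $f'\neq g$. Together these yield $f'\neq g$ on $P\setminus P_0$. I would expect the hypothesis $m\ge3$ not to be needed for this lemma itself; only $p<m$ is used.
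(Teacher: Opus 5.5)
The paper does not prove this lemma. It is quoted from Schirmer \cite{schirmer3} and used as a black box, so there is no in-paper argument to compare yours against. Judged on its own, your proof is correct in outline and is the standard relative general-position argument. It has three ingredients: a locally finite subdivision of $P\setminus P_1$ that refines towards $P_1$; skeleton-by-skeleton removal of coincidences, using only $\dim\sigma<m$; and a perturbation allowance $\delta$ vanishing exactly on $P_1$, so that $f'$ extends continuously by $f$. Your remark that $m\geq 3$ is not used is also right: in the paper it only ensures $p=2<m$ for $P=I\times I$. The shrinking allowance is needed only in the paper's second application, where $P_0\neq\varnothing$. When $P_0=\varnothing$, $\rho(f,g)$ is bounded below near $P_1$, and a finite triangulation with a uniform allowance suffices.

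Three details should be tightened; none is a real gap.

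First, condition (b) read literally cannot be arranged. Near $P_1\setminus P_0$ one has $\min_\sigma\delta\to 0$ while $\rho(f,g)$ stays bounded below, so no ball small relative to $\delta$ contains $f(\sigma)\cup g(\sigma)$; for disconnected $N$ there may be no common chart at all. The fix is that a chart is only needed on simplices where $\rho(f,g)$ drops below the allowance. There $f(\sigma)\cup g(\sigma)$ is small once $\eps$ is shrunk below a Lebesgue number of a finite chart cover. On all other simplices, any extension within the allowance misses $g$ automatically.

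Second, the allowance on $\sigma$ cannot simply be $\min_\sigma\delta$. The values on $\partial\sigma$ were fixed earlier on faces whose allowances may be larger. Use instead something like $\mu(\sigma)=\min\{\delta(x)\mid x\in\overline{\mathrm{St}}(\sigma)\}$, which is positive and satisfies $\mu(\tau)\leq\mu(\sigma)$ for every face $\tau$ of $\sigma$. Let lower-dimensional simplices consume only a fraction of their allowance, so that room remains on $\sigma$.

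Third, since $f'|_{\partial\sigma}\neq f|_{\partial\sigma}$, you must first extend the small difference $f'-f$ from $\partial\sigma$ over $\sigma$ in the chart, obtaining some $F$, and then modify $h=F-g$ rel $\partial\sigma$. Because $g$ is merely continuous, ``generic'' should be made concrete, as follows.
\begin{itemize}
\item Take a PL approximation $h_1$ of $h$.
\item Add a small constant $v$ with $-v\notin h_1(\sigma)$; this is possible because $h_1(\sigma)$ has dimension less than $m$.
\item Interpolate linearly between $h$ and $h_1+v$ on a collar of $\partial\sigma$ on which $|h|>|h_1+v-h|$.
\end{itemize}
The interpolated map then has no zeros, and $f'=g+h'$ is the required extension. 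With these adjustments the argument is complete.
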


We also use the following, which is an extension of Jiang's Special Homotopy Extension Property \cite[Lemma 2.1]{jiang1980}.

\begin{lem}\label{lem:special-homotopy-extension}
Let $X$ and $Y$ be closed triangulable manifolds and $A\subseteq X$ a closed subpolyhedron. Given maps $f:X\to D_n(Y)$ and $g:X\to Y$, suppose $F:A\times I\to D_n(Y)$ is a homotopy with $F(-,0)=f|_A$ such that $\Coin(F(-,t),g|_A)=\Coin(f|_A,g|_A)$ for all $t\in I$. Suppose moreover that for all $x\in \Coin(f|_A,g|_A)$, the set $F(x,t)$ is independent of $t$.\footnote{In the single-valued case, this is automatic from $\Coin(F(-,t),g|_A)=\Coin(f|_A,g|_A)$, but when $F$ is $n$-valued we need to add this extra condition.} Then $F$ can be extended to a homotopy $\tilde{F}:X\times I\to D_n(Y)$ with $\Coin(\tilde{F}(-,t),g)=\Coin(f,g)$ for all $t\in I$.
\end{lem}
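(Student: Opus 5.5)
I would follow Jiang's proof of the single-valued Special Homotopy Extension Property and make it $n$-valued by working in $D_n(Y)$ and tracking coincidences with the fixed map $g$ by distance. Write $C=\Coin(f,g)$ and $C_A=C\cap A$. The aim is to extend $F$ over $X\times I$ in two stages: first over a neighbourhood of $A\cup C$ using the product structure, then over the rest using a cutoff in $t$ that keeps the homotopy as close to $f$ as the gap of $f$ and the separation from $g$ allow.

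\textbf{Step 1 (extension over $C$).} Define $F(x,t)=f(x)$ for $x\in C$. On $C_A$ this agrees with the given $F$ by the extra hypothesis that $F(x,t)$ is independent of $t$. Hence $F$ becomes a continuous map on $(A\cup C)\times I\cup X\times\{0\}$. This is where the extra hypothesis is needed: in the $n$-valued case, fixing $C_A$ pointwise is not automatic.

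\textbf{Step 2 (extension to all of $X\times I$).} Since $(X,A)$ is a polyhedral pair, $A\times I\cup X\times\{0\}$ is a retract of $X\times I$. I would rather use the following construction. Choose a retraction $r:X\times I\to A\times I\cup X\times\{0\}$ and define a provisional extension $F'=F\circ r$ into $D_n(Y)$.

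\textbf{Step 3 (preventing new coincidences).} The provisional extension may create new coincidences near $A$. To prevent this, I would use Jiang's device of a function $\tau:X\to I$ with $\tau^{-1}(0)\supseteq C\setminus A$ and $\tau=1$ on $A$, and set
\[
\tilde F(x,t)=F'(x,\tau(x)t).
\]
The key point is to choose $\tau$ so that $\tilde F(x,t)$ is close to $f(x)$, or close to $F(\pi(x),\cdot)$ for a nearby point of $A$, whenever $x\notin A\cup C$.

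Concretely, let $\delta(x)=\rho(f(x),g(x))$, which is positive off $C$. By continuity and compactness, there is a neighbourhood $W$ of $A$ such that the coincidence set of $F'(-,t)$ with $g$ on $W$ lies in $A$ for all $t$. This uses the assumption $\Coin(F(-,t),g|_A)=C_A$ together with a tube argument around the compact set $A\times I$ minus a neighbourhood of $C_A\times I$. Near $C_A$, I would use Lemma \ref{lem:f1}: on $U_0$, the coincidences come from a single branch $f_1$, and $F$ keeps the branch through $g$ fixed on $C_A$. Off $W$, choose $\tau$ small enough that the distance from $\tilde F(x,t)$ to $f(x)$ is less than $\delta(x)/2$ by uniform continuity, and let $\tau\to0$ as $x\to C$.

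\textbf{Main obstacle.} I expect the hardest step to be controlling coincidences in a neighbourhood of $C_A$ off $A$, since there $\delta\to0$ and the homotopy on $A$ need not be small. My first attempt would be to use the local splitting near $C_A$, as in Lemma \ref{lem:f1}. I would push the problem to the single-valued branch $f_1$ through $g$ and apply Jiang's single-valued lemma \cite[Lemma 2.1]{jiang1980} to that branch. The other branches stay at distance at least $\gamma(f)/2$ from $g$ for small deformations, so they cannot create coincidences. The resulting local extension would then be glued with the construction of Steps 2 and 3 using a partition of unity on $\tau$.
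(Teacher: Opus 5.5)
There is a genuine gap, at exactly the step you call the main obstacle. The Step 3 framework cannot work there: a continuous $\tau$ with $\tau=1$ on $A$, combined with proximity estimates, is not enough.

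Take the simplest case $A=\{x_0\}$ with $x_0\in\Coin(f,g)$, $g$ constant, and $F(x_0,t)=f(x_0)$ for all $t$. For any retraction $r$ and any $t>0$, continuity forces $r(x,s)\in\{x_0\}\times I$ for $(x,s)$ near $(x_0,t)$. Hence $F'(x,s)=f(x_0)\ni g(x)$ there. So the provisional extension has coincidences with $g$ at every point of a punctured neighbourhood of $x_0$, even though in general $g(x)\notin f(x)$ at those points. Three consequences follow:
\begin{itemize}
\item no neighbourhood $W$ as in Step 3 exists;
\item $\delta(x)\to 0$ at $x_0$, so no estimate of the form ``within $\delta(x)/2$ of $f(x)$'' is available;
\item for any continuous $\tau$ with $\tau(x_0)=1$, the map $x\mapsto F'(x,\tau(x))$ meets $g$ at points $x\neq x_0$ arbitrarily close to $x_0$.
\end{itemize}
Your proposed remedy does not repair this. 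Applying Jiang's lemma to the branch through $g$ produces a \emph{different} local extension. Two different extensions into $D_n(Y)$ cannot be glued by a partition of unity on $\tau$, because rescaling time only modifies one fixed map. Note also that Step 1, the only place where you use the stationarity hypothesis, is abandoned in Step 2.

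The missing idea is that $\tau$ must be allowed to be discontinuous at points of $A$, and the stationarity hypothesis is exactly what makes that harmless. The paper builds the bad set into the construction. With $F'=F\circ r$ as in your Step 2, let $C=\{x\in X\mid g(x)\in F'(\{x\}\times I)\}$; this is a closed set containing $\Coin(f,g)$. Define $\tilde F(x,t)=F'\bigl(x,t(1-d(x,A)/d(x,C))\bigr)$ if $d(x,A)<d(x,C)$, and $\tilde F(x,t)=f(x)$ otherwise.

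If $g(x)\in\tilde F(x,t)$, then necessarily $d(x,A)\geq d(x,C)$, since otherwise $x\notin C$. So $\tilde F(x,t)=f(x)$, and no coincidences are created. None are lost either: points of $\Coin(f,g)$ lie in $C$, so $\tilde F=f$ there.

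The rescaling factor is discontinuous only on $A\cap C$. By the hypothesis $\Coin(F(-,t),g|_A)=\Coin(f|_A,g|_A)$, we have $A\cap C\subseteq\Coin(f|_A,g|_A)$, and on this set $F'(x,\cdot)$ is constant. Uniform continuity of a local splitting of $F'$ then gives continuity of $\tilde F$, and $\tilde F=F$ on $A$. No gap function, no Lemma~\ref{lem:f1}, and no single-valued Jiang lemma are needed; the argument is purely point-set.
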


\begin{proof}
According to \cite[p.~31, O.]{hu}, there exists a retraction $r:X\times I\to X\times \{0\}\cup A\times I$. Therefore, the map \[
\tilde{F}_0:X\times \{0\}\cup A\times I\to D_n(Y):\begin{cases}
(x,0)\mapsto f(x) \\
(x,t)\mapsto F(x,t) & \text{for } x\in A.
\end{cases}
\]
extends to a homotopy $\tilde{F}'=\tilde{F}_0\circ r:X\times I\to D_n(Y)$ satisfying $\tilde{F}'(-,0)=f$ and $\tilde{F}'(x,t)=F(x,t)$ for all $x\in A$.

Now consider the set \[
C=\{x\in X\mid g(x)\in \tilde{F}'(\{x\}\times I)\}.
\]
We show that $C$ is closed, by showing that $X\setminus C$ is open. Fix metrics $d$ on $X$ and $\rho$ on $Y$, and consider the metric $d'((x_1,t_1),(x_2,t_2))=d(x_1,x_2)+|t_1-t_2|$ on $X\times I$. For $x\in X\setminus C$, let $K\subseteq X$ be a simply connected compact neighborhood of $x$. Let $\{\tilde{F}'_1,\ldots,\tilde{F}'_n\}$ be a splitting of $\tilde{F}'$ on $K\times I$. Since $x\notin C$ and $Y$ is Hausdorff, there is an $\eps>0$ such that the $\eps$-neighborhood of $g(x)$ and the $\eps$-neighborhood of the compact set $\cup_{i=1}^n \tilde{F}'_i(\{x\}\times I)$ are disjoint. Since $K\times I$ is compact, each map $\tilde{F}'_i$ is uniformly continuous, so there is a $\delta>0$ such that on $K\times I$, \[
d'((x_1,t_1),(x_2,t_2))<\delta \enskip\Rightarrow\enskip \rho(\tilde{F}'_i(x_1,t_1),\tilde{F}'_i(x_2,t_2))<\eps
\]
for all $i\in\{1,\ldots,n\}$. Also, $g$ is uniformly continuous on $K$, so there is a $\delta'>0$ such that on $K$, \[
d(x_1,x_2)<\delta' \enskip\Rightarrow\enskip \rho(g(x_1),g(x_2))<\eps.
\]
Then every $x'\in K$ with $d(x,x')<\min\{\delta,\delta'\}$ lies in $X\setminus C$: suppose $g(x')\in \tilde{F}'(\{x'\}\times I)$, then $g(x')=\tilde{F}'_i(x',t)$ for some $i$ and $t$. But $d'((x,t),(x',t))<\delta$, so $\tilde{F}'_i(x',t)$ belongs to the $\eps$-neighborhood of $\cup_{i=1}^n \tilde{F}'_i(\{x\}\times I)$, whereas $g(x')$ belongs to the $\eps$-neighborhood of $g(x)$ since $d(x,x')<\delta'$. This cannot be true since those neighborhoods were disjoint.

Since $C$ is closed, we may define \[
\tilde{F}:X\times I\to D_n(Y):(x,t)\mapsto \begin{cases}
\tilde{F}'(x,0) & \text{if } d(x,A)\geq d(x,C) \\
\tilde{F}'(x,t(1-\tfrac{d(x,A)}{d(x,C)})) & \text{if } d(x,A)<d(x,C).
\end{cases}
\]
Let us show this map is continuous. This is obvious everywhere but in the points $(x,t)$ with $d(x,A)=d(x,C)=0$, i.e.\ $x\in A\cap C$.

First, we observe that if $x\in A\cap C$, then $\tilde{F}'(x,t)$ is independent of $t$. Indeed, if $x\in A\cap C$, then $g(x)\in \tilde{F}'(\{x\}\times I)=F(\{x\}\times I)$. Take $t\in I$ such that $g(x)\in F(x,t)$. Then $x\in \Coin(F(-,t),g|_A)=\Coin(f|_A,g|_A)$, hence by our assumption, $F(x,t)=\tilde{F}'(x,t)$ is independent of $t$.

Take $(x,t)$ with $x\in A\cap C$. Let $K\subseteq X$ be a simply connected compact neighborhood of $x$. We will show that $\tilde{F}|_{K\times I}$ is continuous at $(x,t)$, by showing that for any $\eps>0$, there exists a $\delta>0$ such for $(x',t')\in K\times I$, \[
d'((x,t),(x',t'))<\delta \enskip\Rightarrow\enskip \rho_H(\tilde{F}(x,t),\tilde{F}(x',t'))<\eps,
\] 
where \[
\rho_H(\{y_1,\ldots,y_n\},\{y'_1,\ldots,y'_n\})=\max\left\{\max_{i=1}^n\min_{j=1}^n\rho(y_i,y'_j),\max_{j=1}^n\min_{i=1}^n\rho(y_i,y'_j) \right\}
\] 
is the Hausdorff distance (see \cite[Proposition 4.2]{browngoncalves}).

Let $\{\tilde{F}'_1,\ldots,\tilde{F}'_n\}$ be a splitting of $\tilde{F}'$ on $K\times I$, and choose $\eps>0$. Take $\delta>0$ such that on $K\times I$, \[
d'((x_1,t_1),(x_2,t_2))<\delta \enskip\Rightarrow\enskip \rho(\tilde{F}'_i(x_1,t_1),\tilde{F}'_i(x_2,t_2))<\eps
\]
for all $i\in\{1,\ldots,n\}$.
Take $(x',t')\in K\times I$ with $d'((x,t),(x',t'))<\delta$. Let $t''\in I$ be such that $\tilde{F}(x',t')=\tilde{F}'(x',t'')$. Then $\tilde{F}'(x,t'')=\tilde{F}'(x,0)=\tilde{F}(x,t)$ since $\tilde{F}'(x,t)$ is independent of $t$. Also, \[
d'((x,t''),(x',t''))=d(x,x')\leq d'((x,t),(x',t'))<\delta,
\]
so $\rho(\tilde{F}'_i(x,t''),\tilde{F}'_i(x',t''))<\eps$ for all $i$. 
It follows that \[
\begin{split}
\rho_H(\tilde{F}(x,t),\tilde{F}(x',t'))&=\rho_H(\tilde{F}'(x,t''),\tilde{F}'(x',t''))\\&\leq \max_{i=1}^n\rho(\tilde{F}'_i(x,t''),\tilde{F}'_i(x',t''))<\eps.
\end{split}
\]

To show that $\Coin(\tilde{F}(-,t),g)=\Coin(f,g)$ for all $t\in I$, suppose $g(x)\in \tilde{F}(x,t)$. Then $d(x,A)\geq d(x,C)$, for otherwise $d(x,C)>0$, which means $g(x)\notin \tilde{F}'(\{x\}\times I)$ so certainly $g(x)$ is not contained in $\tilde{F}(\{x\}\times I)$. But if $d(x,A)\geq d(x,C)$, then $g(x)\in \tilde{F}'(x,0)=F_0(x,0)=f(x)$. Conversely, if $g(x)\in f(x)=\tilde{F}'(x,0)$, then $d(x,C)=0$ so $\tilde{F}(x,t)=\tilde{F}'(x,0)=f(x)$.
\end{proof}

\begin{proof}[Proof of Lemma \ref{lem:f-to-omega}]
Since $\omega$ is homotopic to $gc$ and $gc$ is homotopic to $c_1$, there is a homotopy $H_1:I\times I\to Y$ from $c_1$ to $\omega$.

For every $i\geq 2$, we apply the Coincidence Lemma to $P=I\times I$, $P_0=\varnothing$, $P_1=\partial(I\times I)$ and the maps \begin{align*}
I\times I\to Y&: (t,s)\mapsto c_i(s) \\
I\times I\to Y&: (t,s)\mapsto H_1(s,t)
\end{align*}
to obtain a map $H_i:I\times I\to Y$ satisfying \begin{itemize}
\item $H_i(s,t)=c_i(s)$ if $s=0,1$ or $t=0,1$
\item $H_i(s,t)\neq H_1(s,t)$ for all $s,t\in I$
\item $\rho(H_i(s,t),c_i(s))<\min\{\eta,\tfrac{\gamma(f)}{2}\}$ for all $s,t\in I$.
\end{itemize}
(Recall that $\eta>0$ was defined such that $\rho(gc,c_i)>\eta$ for all $i\geq 2$.) By the first bullet, $H_i$ is a path homotopy starting and ending at $c_i$. For $i\neq j$, both different from $1$, \begin{align*}
\rho(H_i(s,t),H_j(s,t)) &\geq \rho(c_i(s),c_j(s))-\rho(H_i(s,t),c_i(s))-\rho(H_j(s,t),c_j(s)) \\
&>\gamma(f)-\tfrac{\gamma(f)}{2}-\tfrac{\gamma(f)}{2}=0.
\end{align*}
Combined with the second bullet, it follows that $H=\{H_1,\ldots,H_n\}$ is $n$-valued.

Now we apply the Coincidence Lemma again, to $P=I\times I$, $P_0=\partial I\times I$, $P_1=\partial (I\times I)$ and the maps \begin{align*}
I\times I\to Y&: (t,s)\mapsto H_1(s,t) \\
I\times I\to Y&: (t,s)\mapsto gc(s).
\end{align*}
We obtain a map $H'_1:I\times I\to Y$ satisfying \begin{itemize}
\item $H'_1(s,t)=H_1(s,t)$ if $s=0,1$ or $t=0,1$
\item $H'_1(s,t)\neq gc(s)$ for all $t\in I$ and $s\neq 0,1$
\item $\rho(H_1(s,t),H'_1(s,t))<\gamma(H)$ for all $s,t\in I$.
\end{itemize}
By the first bullet, $H'_1$ is still a path homotopy from $c_1$ to $\omega$. For all $i\geq 2$, \begin{align*}
\rho(H'_1(s,t),H_i(s,t)) &\geq \rho(H_1(s,t),H_i(s,t))-\rho(H_1(s,t),H'_1(s,t)) \\
&>\gamma(H)-\gamma(H)=0,
\end{align*}
so $H'=\{H'_1,H_2,\ldots,H_n\}$ is $n$-valued. Moreover, $\Coin(H'(-,t),gc)=\{0,1\}$ for all $t$: indeed, suppose $gc(s)\in H'(s,t)$. For $i\geq 2$, \begin{align*}
\rho(gc(s),H_i(s,t)) &\geq \rho(gc(s),c_i(s))-\rho(H_i(s,t),c_i(s)) \\
&>\eta-\eta=0,
\end{align*}
so $gc(s)=H'_1(s,t)$, which implies $s\in \{0,1\}$ by the second bullet.

Thus, we have a homotopy $H':I\times I\to D_n(Y)$ from $fc=\{c_1,\ldots,c_n\}$ to the path $\{\omega,c_2,\ldots,c_n\}$, such that $\Coin(H'(-,t),gc)=\{0,1\}$ for all $t$. We can apply Lemma \ref{lem:special-homotopy-extension} to $A=c(I)$ and $F:A\times I\to D_n(Y)$ given by $F(c(s),t)=H'(s,t)$ (which is well-defined since $c$ is an embedding): indeed, $F(-,0)=f|_A$, and \[
\Coin(F(-,t),g|_A)=\{c(0),c(1)\}=\Coin(f|_A,g|_A)
\]
for all $t\in I$, and for all points in $\Coin(f|_A,g|_A)$, i.e.\ for $c(s)$ with $s\in \{0,1\}$, the set $F(c(s),t)=H'(s,t)=H(s,t)=f(c(s))$ is independent of $t$.
By Lemma \ref{lem:special-homotopy-extension} we obtain a homotopy $\tilde{F}:X\times I\to D_n(Y)$ with $\tilde{F}(c(s),t)=H'(s,t)$ for all $s,t\in I$, such that $\Coin(\tilde{F}(-,t),g)=\Coin(f,g)$ for all $t$. The map $f'=\tilde{F}(-,1)$ satisfies the desired properties.
\end{proof}

\subsubsection{Applying to a suitable $\omega$}

By working in Euclidean neighborhoods covering $gc(I)$, one readily obtains:

\begin{lem}\label{lem:omega}
For every $\tau>0$, there is a path $\omega:I\to Y$ homotopic to $gc$ with $\Coin(gc,\omega)=\{0,1\}$ and $\rho(gc,\omega)<\tau$.
\end{lem}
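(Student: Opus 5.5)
The plan is to build $\omega$ by pushing $gc$ off itself by a small amount in the interior of $I$, while keeping the endpoints fixed. Since $c$ is an embedding of $I$ into $X$, the path $gc$ need not be an embedding. So instead of pushing $gc$ off itself, we work with a path homotopy that moves each point $gc(s)$ to a nearby point in a chart.

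First we cover the compact set $gc(I)$ by finitely many Euclidean charts. By a Lebesgue number argument, we choose a subdivision $0=s_0<s_1<\cdots<s_k=1$ such that each $gc([s_{j-1},s_j])$ lies in a chart $\phi_j:W_j\to\RR^m$. We shrink the $W_j$ so that every chart ball has $\rho$-diameter less than $\tau$.

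Next we define $\omega(s)=\phi_j^{-1}\bigl(\phi_j(gc(s))+\lambda(s)v_j\bigr)$ on $[s_{j-1},s_j]$. Here $v_j\in\RR^m$ is a small nonzero vector and $\lambda:I\to[0,1]$ is continuous with $\lambda(0)=\lambda(1)=0$ and $\lambda>0$ on $(0,1)$. In a single chart the displacement $\lambda(s)v_j$ is nonzero exactly when $\lambda(s)\neq0$, so $\omega(s)\neq gc(s)$ for $s\in(0,1)$, and $\omega(0)=gc(0)$, $\omega(1)=gc(1)$.

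The main obstacle is that at the subdivision points $s_j$ the two chart formulas disagree, so $\omega$ would be discontinuous there. To fix this, at each $s_j$ we use the chart $W_j\cap W_{j+1}$ (which contains $gc(s_j)$) to interpolate. We choose the displacement field as a continuous nowhere-zero section $v(s)$ of the pulled-back tangent bundle $(gc)^*TY$ over $I$. This exists because $I$ is contractible, so the bundle is trivial. We then exponentiate in a Riemannian metric: $\omega(s)=\exp_{gc(s)}(\lambda(s)\,v(s))$. For $\|\lambda v\|$ below the injectivity radius of the compact manifold $Y$, which we can assume after smoothing, the point $\omega(s)$ differs from $gc(s)$ whenever $\lambda(s)v(s)\neq0$. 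If we want to avoid Riemannian structure, the chartwise version works the same way by patching the vector via a partition of unity subordinate to the subdivision, with the vectors chosen in the same direction relative to the overlapping chart, and then shrinking to keep it nonzero.

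Finally, $\rho(gc,\omega)<\tau$ holds once $\sup\lambda\|v\|$ is small enough, by uniform continuity on the compact set. For homotopy rel endpoints we use $H(s,t)=\exp_{gc(s)}(t\lambda(s)v(s))$, which fixes $s=0,1$ because $\lambda$ vanishes there. This gives $\omega\simeq gc$ as paths with $\Coin(gc,\omega)=\{0,1\}$.
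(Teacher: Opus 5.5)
\textbf{There is a gap in generality.} Your exponential-map argument is correct when $Y$ is a smooth Riemannian manifold. The nowhere-zero section $v$ of $(gc)^*TY$ handles the gluing along $I$ at once. Injectivity of $\exp_{gc(s)}$ below the injectivity radius gives $\Coin(gc,\omega)=\{0,1\}$, and $H(s,t)=\exp_{gc(s)}(t\lambda(s)v(s))$ is a homotopy rel endpoints.

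However, the paper only assumes $Y$ is a closed orientable \emph{triangulable}, i.e.\ topological, manifold. Then there is no $TY$, no Riemannian metric and no $\exp$. ``After smoothing'' is not available in general: there are closed PL (hence triangulable) manifolds with no smooth structure at all, e.g.\ Kervaire's $10$-manifold. So in the paper's setting everything rests on your last sentence about charts. That is also what the paper's one-line justification (``working in Euclidean neighborhoods covering $gc(I)$'') refers to.

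As written, that sentence does not work. The transition maps $\phi_{j+1}\circ\phi_j^{-1}$ are only homeomorphisms, so a vector in one chart has no direction in another. What chart $j$ produces, read in chart $j+1$, is the displacement $w_j(s)=\phi_{j+1}(\omega(s))-\phi_{j+1}(gc(s))$. This is a continuous nonvanishing function of $s$, but its direction also depends on the size of the displacement, since a homeomorphism can spiral at small scales. A convex combination of $w_j(s)$ with $\lambda(s)v_{j+1}$ can therefore vanish. Shrinking the displacement neither prevents this nor preserves any ``same direction'' choice.

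\textbf{A version that works for topological $Y$.}
\begin{enumerate}
\item Take the subdivision with $gc([s_{j-1},s_j])\subseteq W_j$, each $W_j$ a chart ball of $\rho$-diameter $<\tau$. For $0<j<k$ pick $y_j\neq gc(s_j)$ in a small ball around $gc(s_j)$ contained in $W_j\cap W_{j+1}$, and put $y_0=gc(0)$, $y_k=gc(1)$.
\item On $[s_{j-1},s_j]$ define $\omega$ by $\phi_j(\omega(s))=\phi_j(gc(s))+u_j(s)$. Here $u_j$ is a path of small norm from $\phi_j(y_{j-1})-\phi_j(gc(s_{j-1}))$ to $\phi_j(y_j)-\phi_j(gc(s_j))$ that is nonzero except at $s=0$ and $s=1$. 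It exists because a punctured ball in $\RR^m$ is path-connected for $m\geq 2$ (here $m\geq 3$).
\item The pieces agree at each $s_j$, since both give $y_j$. Hence $\omega$ is continuous, $\Coin(gc,\omega)=\{0,1\}$, and $\rho(gc,\omega)<\tau$.
\item For the homotopy rel endpoints, let $\delta_j$ be a path from $gc(s_j)$ to $y_j$ inside that small ball, constant for $j=0,k$. Since $W_j$ is simply connected, $gc|_{[s_{j-1},s_j]}\simeq\delta_{j-1}\cdot\omega|_{[s_{j-1},s_j]}\cdot\bar{\delta}_j$ rel endpoints. After concatenating over $j$, the $\delta_j$ cancel.
\end{enumerate}

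Your interpolation idea can also be rescued. Fix the size of the displacements first and build $\omega$ chart by chart. Choose $v_{j+1}$ as a positive multiple of $w_j(s_j)$. Then interpolate on an interval around $s_j$ short enough that $\langle w_j(s),v_{j+1}\rangle>0$ there, so every convex combination is nonzero. It is the interpolation interval that must be shrunk, not the displacement.
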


To choose a suitable $\tau$, we recall Lemma \ref{lem:schirmer-Hilfssatz3} and another result from Schirmer's coincidence theory for single-valued maps:

\begin{lemnum}[\ref{lem:schirmer-Hilfssatz3}]
Let $N$ be a closed orientable triangulable manifold equipped with a metric $\rho$.
For every $\eps>0$, there exists an $\alpha(\eps)>0$ such that for any closed orientable triangulable manifold $M$, any two maps $f,g:M\to N$ with $\rho(f,g)<\alpha(\eps)$ are $\eps$-homotopic.
\end{lemnum}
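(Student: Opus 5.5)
The plan is the standard straight-line homotopy argument, pushed back into $N$ by a retraction. The key point is that $\alpha(\eps)$ depends only on $N$, $\rho$ and $\eps$, never on $M$ or on the maps.

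First I would reduce to a Euclidean metric. Since $N$ is a compact triangulable manifold, it embeds in some $\RR^k$; let $|\cdot|$ be the induced Euclidean distance. The identity map between $(N,\rho)$ and $(N,|\cdot|)$ is a homeomorphism of compact metric spaces, hence uniformly continuous in both directions. So for every $\eps>0$ there is $\eps'>0$ with $|y-y'|<\eps'\Rightarrow \rho(y,y')<\eps$, and for every $\beta>0$ there is $\beta'>0$ with $\rho(y,y')<\beta'\Rightarrow |y-y'|<\beta$. It therefore suffices to prove the statement for $|\cdot|$ and then translate $\alpha$ and $\eps$ through these moduli. This is where the footnote's claim that the choice of metric is irrelevant is justified.

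Next, $N$ is a compact ENR (being a compact polyhedron/manifold), so there is an open neighbourhood $W\supseteq N$ in $\RR^k$ and a retraction $r:W\to N$. By compactness of $N$, there is $\delta>0$ such that the closed $\delta$-neighbourhood $\bar W_\delta=\{z\mid d(z,N)\leq\delta\}$ is a compact subset of $W$. Then $r$ is uniformly continuous on $\bar W_\delta$, so there is $0<\delta'\leq\delta$ with $z,z'\in\bar W_\delta$ and $|z-z'|<\delta'$ implying $|r(z)-r(z')|<\eps'$. I will put $\alpha=\delta'$, transported back to $\rho$ by the previous paragraph.

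Now let $f,g:M\to N$ satisfy $|f(x)-g(x)|<\alpha$ for all $x$. The whole segment $[f(x),g(x)]$ then lies within distance $\alpha\leq\delta$ of $f(x)\in N$, hence in $\bar W_\delta\subseteq W$. Define
\[
h_t(x)=r\bigl((1-t)f(x)+tg(x)\bigr).
\]
This is continuous, with $h_0=f$ and $h_1=g$ because $r$ fixes $N$. Moreover $|(1-t)f(x)+tg(x)-f(x)|<\alpha=\delta'$, so $|h_t(x)-f(x)|=|r(\cdots)-r(f(x))|<\eps'$, hence $\rho(h_t(x),f(x))<\eps$. Thus $f$ and $g$ are $\eps$-homotopic.

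The only step needing care is the existence of the retraction neighbourhood and the uniform control on it; I expect it to be the main (mild) obstacle. It follows from $N$ being a compact ENR, or, more concretely, from a regular neighbourhood of the triangulated $N$ in $\RR^k$. Everything else is uniform continuity on compact sets. The orientability of $M$ and $N$ plays no role.
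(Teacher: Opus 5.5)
Your argument is correct. The paper gives no proof of its own: it cites Schirmer's Hilfssatz 3, which is stated for the Euclidean metric coming from an embedding $N\subseteq\RR^k$, and only asserts in a footnote that the choice of metric is irrelevant. Your proof is the standard argument of this kind (a straight-line homotopy pushed back into $N$ by a neighbourhood retraction, with uniform continuity on a compact neighbourhood). Your first step, that any two compatible metrics on the compact space $N$ are uniformly equivalent, is exactly what makes that footnote rigorous.
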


\begin{lem}[{\cite[Satz VIIIa]{schirmer-coin}}]\label{lem:schirmer-SatzVIIIa}
Let $M$ and $N$ be closed orientable triangulable manifolds of equal dimension $m\geq 2$, and $\rho$ a metric on $N$. For every $\eps>0$, there exists a $\beta(\eps)>0$ such that the following holds. Let $A\subseteq M$ be a strongly connected\footnote{That is: for any decomposition $A=K_1\cup K_2$ into non-empty subcomplexes, $K_1$ and $K_2$ have at least one $(m-1)$-simplex in common; see \cite[p.~189]{alexandroffhopf}. Schrimer calls this `$m$-dimensionally connected'.} homogeneous $m$-dimensional subcomplex. If $g:A\to N$ and $f_0:\partial A\to N$ are maps with $0<\rho(f_0,g|_{\partial A})<\beta(\eps)$, then $f_0$ can be extended to a map $f:A\to N$ with $\rho(f,g)<\eps$, such that either $\Coin(f,g)=\varnothing$, or $f$ and $g$ have one coincidence point with non-zero index.
\end{lem}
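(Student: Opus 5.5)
The idea is to turn the statement into a zero-elimination problem for a vector field, combining the classical Hopf argument on a strongly connected homogeneous complex with a local linearisation of $N$ near the graph of $g$.

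\emph{Linearisation.} I would embed $N$ in some $\RR^k$ and fix a tubular neighbourhood with retraction $r$. For points $y,y'$ with $\rho(y,y')$ small, $y'$ is then encoded by a unique small tangent vector $v\in T_yN$, continuously in $(y,y')$, with $v=0$ exactly when $y=y'$. Choose $\beta(\eps)$ below the radius on which this encoding works and is controlled by $\eps$. This choice depends only on $N$ and $\rho$, as the statement requires.

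\emph{Reduction to sections.} Under this encoding, maps $f:A\to N$ with $\rho(f,g)<\eps$ correspond to small sections $s$ of the pulled-back bundle $E=g^{*}TN$ over $A$. Coincidences of $(f,g)$ correspond to zeros of $s$. The boundary datum $f_0$ becomes a nowhere-vanishing section $s_0$ over $\partial A$, because $\rho(f_0,g|_{\partial A})>0$. Since $M$ and $N$ are oriented, the local coincidence index of an isolated coincidence equals, up to a fixed sign, the local index of the corresponding zero of $s$, computed in oriented local trivialisations of $E$ and charts of $A$.

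\emph{Extension with isolated zeros.} Extend $s_0$ to a section $s$ over $A$ simplex by simplex, after subdividing so that $E$ is trivial over each simplex. Then perturb $s$ to be piecewise linear and generic in each trivialisation, so that it has finitely many zeros, all in the interiors of top simplices. Scaling $s$ radially by a function equal to $1$ on $\partial A$ keeps $|s|$ below the encoding bound, so the resulting $f$ satisfies $\rho(f,g)<\eps$.

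\emph{Merging zeros (main obstacle).} Strong connectedness gives, for any two zeros, a chain of $m$-simplices joined consecutively across $(m-1)$-faces. Along this chain I would draw an embedded arc through interiors of top simplices and $(m-1)$-faces only, avoiding the other zeros; this is where $m\ge 2$ is used. A regular neighbourhood $B$ of the arc is an $m$-ball over which $E$ is trivial and oriented compatibly with $M$.

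Inside $B$ I redefine $s$ on all of $B$ as a section with a single zero, keeping $s|_{\partial B}$ fixed. This is possible because $s|_{\partial B}$ is a map $S^{m-1}\to \RR^m\setminus 0$, so it can be coned off with one zero whose degree equals the sum of the two indices. By repeating this, all zeros merge into one, of index $d$.

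If $d=0$, then $s|_{\partial B}$ is null-homotopic in $\RR^m\setminus 0$, and $s$ can be redefined on $B$ without zeros. In both cases I keep the norm below the encoding bound by radial rescaling, which changes neither the zero set nor the indices.

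The delicate points are two. First, the arc must pass only through $(m-1)$-faces, which is exactly what strong connectedness provides. Second, the orientation of $E$ must be carried consistently along the arc so that the indices genuinely add. Translating back through the encoding gives the required $f$: it extends $f_0$, satisfies $\rho(f,g)<\eps$, and has either no coincidences or exactly one coincidence point of non-zero index.
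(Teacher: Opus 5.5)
The paper gives no proof of this lemma; it simply quotes Schirmer's Satz VIIIa, so your argument has to stand on its own. Its combinatorial core is the classical Hopf merging argument, and that part is sound. You use strong connectedness where it is really needed: an arc through open top simplices and open $(m-1)$-faces common to two top simplices of $A$ stays in $\mathring{A}$. Hence the ball $B$ avoids $\partial A$, where $f=f_0$ must be preserved. You use $m\geq 2$ twice: to route the arc around the other zeros, and in Hopf's degree theorem when you remove a zero of index $0$.

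\textbf{The gap is the linearisation step.} Tangent vectors in $T_yN$, a tubular neighbourhood of $N\subseteq\RR^k$, the bundle $g^{*}TN$ with a fibre norm, and PL genericity in its trivialisations all presuppose a smooth structure on $N$. The lemma, however, is stated for closed orientable \emph{triangulable} manifolds, and the paper uses it with $N=Y$ in exactly that generality. Such manifolds need not admit any smooth structure; there are even closed PL manifolds without one. As written, you have therefore proved the statement only for smooth $N$.

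The cleanest repair keeps the rest of your argument intact.
\begin{itemize}
\item The open set $U_\eps=\{(y,y')\in N\times N\mid \rho(y,y')<\eps\}$, with projection $\pr_1$ and the diagonal as section, represents the tangent microbundle of $N$.
\item By the Kister--Mazur theorem, $U_\eps$ contains an open neighbourhood $E$ of the diagonal that is an $\RR^m$-bundle over $N$ with the diagonal as zero section.
\item By compactness there is $\beta(\eps)>0$ with $U_\beta\subseteq E$.
\item Work with sections of $g^{*}E$ instead of $g^{*}TN$. Every such section is a map $f$ with $(g(x),f(x))\in E\subseteq U_\eps$. So $\rho(f,g)<\eps$ holds automatically, and all norm bounds and radial rescalings can be dropped.
\item $g^{*}E$ is trivial over each simplex and over $B$, and it is orientable because $N$ is.
\item Compare a section in an oriented trivialisation with the chart expression $\phi\circ f-\phi\circ g$, sliding the base point $g(x)$ to $g(x_0)$ inside the chart. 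This shows that zero indices and coincidence indices agree up to a global sign.
\item Replace the PL-genericity step as follows: make the section non-vanishing on the $(m-1)$-skeleton by obstruction theory, since $\RR^m\setminus 0$ is $(m-2)$-connected. Then cone it off on each top simplex.
\end{itemize}

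A purely chart-by-chart version is also possible, but it is not a trivial localisation. Each change of chart costs a uniform loss in the $\rho$-bound. You must therefore arrange that every point is modified only a bounded number of times, for instance by keeping one zero per top simplex and merging along a spanning tree of the adjacency graph. Otherwise $\beta$ would end up depending on $A$ and $g$.
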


Take $\tau=\min\{\eta,\tfrac{\alpha}{4},\frac{\beta}{3}\}$, where $\alpha=\alpha(\tfrac{\eta}{3})$ is the constant of Lemma \ref{lem:schirmer-Hilfssatz3} (with $N=Y$), and $\beta=\beta(\frac{\alpha}{4})$ is the constant of Lemma \ref{lem:schirmer-SatzVIIIa} (with $M=X$, $N=Y$). Let $\omega$ be the resulting path from Lemma \ref{lem:omega}.

Note that, since $\tau\leq \eta$, \[
\rho(\omega,c_i)\geq\rho(gc,c_i)-\rho(gc,\omega)>0
\]
for all $i\geq 2$; that is, $\Coin(\omega,c_i)=\varnothing$ for all $i\geq 2$. Thus, we can apply Lemma \ref{lem:f-to-omega}. Let $f':X\to D_n(Y)$ be the resulting map.

\subsubsection{Removing coincidences of $(f',g)$}

\begin{lem}
There is a map $g':X\to Y$ homotopic to $g$ such that $g'=g$ outside of $\bar{U}$, and $\Coin(f',g',\bar{U})$ is either empty, or contains one coincidence point with non-zero index.
\end{lem}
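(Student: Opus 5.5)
We follow Schirmer's single-valued argument, working in the Euclidean neighbourhood $\bar{U}$ around $c(I)$. First we localise. By construction $f'c=\{\omega,c_2,\ldots,c_n\}$ with $\rho(\omega,gc)<\tau$ and $\rho(gc,c_i)>\eta\geq\tau$ for $i\geq 2$. Since $\bar{U}$ is an $\eps$-neighbourhood of the segment $\varphi(c(I))$, we may shrink it if necessary (replacing $\eps$ by something smaller, still containing $p,q$ and no other coincidences). On a simply connected $\bar{U}$, $f'$ splits as $\{f'_1,\ldots,f'_n\}$ with $f'_1c=\omega$ and $f'_ic=c_i$. By uniform continuity of $f'_i$ and $g$, after shrinking we get
\[
\rho(f'_1,g)<2\tau\leq\tfrac{2\beta}{3}<\beta \qquad\text{and}\qquad \rho(f'_i,g)>\tfrac{2\eta}{3}\ \ (i\geq 2)\quad\text{on }\bar{U}.
\]
Because $f'$ was obtained with $\Coin(f',g)=\Coin(f,g)$, we have $\Coin(f',g,\bar{U})=\{p,q\}=\Coin(f'_1,g,\bar{U})$. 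In particular $f'_1\neq g$ on $\partial\bar{U}$.

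Next we apply Lemma \ref{lem:schirmer-SatzVIIIa}, with the roles chosen so that $g$ is the map being modified. Triangulate $\bar{U}$, via $\varphi$ a tubular $\eps$-neighbourhood of a segment, as a strongly connected homogeneous $m$-dimensional subcomplex $A$ of (a subdivision of) $X$. Apply the lemma to the map $f'_1:A\to Y$ and the boundary map $g|_{\partial A}$. These satisfy $0<\rho(g|_{\partial A},f'_1|_{\partial A})<\beta=\beta(\alpha/4)$. The lemma gives an extension $g'_A:A\to Y$ of $g|_{\partial A}$ with $\rho(g'_A,f'_1)<\alpha/4$, such that $\Coin(f'_1,g'_A)$ is empty or is a single point with non-zero index. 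Define $g'=g'_A$ on $A=\bar{U}$ and $g'=g$ outside; this is continuous since the two agree on $\partial A$.

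Then we check the three conclusions. For the homotopy, on $\bar{U}$ we have $\rho(g',g)\leq\rho(g',f'_1)+\rho(f'_1,g)<\alpha/4+2\tau\leq\alpha/4+\alpha/2<\alpha$, and $g'=g$ elsewhere, so Lemma \ref{lem:schirmer-Hilfssatz3} gives an $\eta/3$-homotopy $g\simeq g'$. For the coincidences, if $x\in\bar{U}$ and $i\geq 2$, then $\rho(f'_i(x),g'(x))\geq\rho(f'_i,g)-\rho(g,g')>2\eta/3-\eta/3>0$, so $\Coin(f',g',\bar{U})=\Coin(f'_1,g',\bar{U})$, which is empty or one point. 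For the index, Remark \ref{rmk:local-index} shows that the $n$-valued index at that point equals $\ind_1(f'_1,g',x)\neq 0$.

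The main obstacle is the bookkeeping of the constants. We must make sure the hypotheses of Lemma \ref{lem:schirmer-SatzVIIIa} hold, namely strong connectivity of $A$ and the quantitative closeness $\rho(f'_1,g)<\beta$ on all of $\partial A$, not only along $c$. This needs $\bar{U}$ to be shrunk after $f'$ is constructed while keeping $\Coin(f',g,\bar{U})=\{p,q\}$, and that is the step where care is required.
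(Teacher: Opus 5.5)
Your argument follows the paper's proof step for step. You split $f'$ on $\bar U$ with $f'_1c=\omega$, and shrink $\bar U$ into a uniform-continuity neighbourhood of $c(I)$ so that $f'_1$ is within $\beta$ of $g$ while $f'_2,\ldots,f'_n$ stay far from $g$. You then apply Satz VIIIa with $g$ as the map being extended, paste with $g$, and use $\rho(g,g')<\alpha$ together with Hilfssatz 3 to get an $\tfrac{\eta}{3}$-homotopy that keeps the other factors away from $g'$. Your single estimate $\rho(f'_1,g)<2\tau$ on all of $\bar U$ is a tidy replacement for the paper's two separate estimates on $\partial A$ and on $A$.

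The one step that does not hold as written is taking $A=\bar{U}$. The set $\bar U$ is only known to be homeomorphic, via $\varphi$, to a tube in $\RR^m$. Nothing makes it a subcomplex of a subdivision of the triangulation of $X$, since its frontier need not be polyhedral, and Satz VIIIa needs a homogeneous, strongly connected subcomplex. The paper instead refines the triangulation and takes a strongly connected homogeneous $m$-dimensional subcomplex $A\subseteq U$ with $p,q\in\mathring{A}$, as in Schirmer's proof of Hilfssatz 9. Strong connectivity then has to be arranged; it no longer comes for free from $A$ being a ball.

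With that choice your argument goes through unchanged:
\begin{itemize}
\item your bounds hold on all of $\bar U$, hence on $\partial A$;
\item positivity of $\rho(f'_1,g)$ on $\partial A$ holds because $p,q\in\mathring{A}$;
\item you need one extra observation, $\Coin(f',g',\bar U\setminus A)=\Coin(f',g,\bar U\setminus A)=\varnothing$, so that $\Coin(f',g',\bar U)=\Coin(f'_1,g',A)$.
\end{itemize}
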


\begin{proof}
Since $\bar{U}$ is homeomorphic to an $\eps$-neighborhood of a line segment in $\RR^m$, it is simply connected, so the $n$-valued map $f'$ splits on $\bar{U}$; write $f'|_{\bar{U}}=\{f'_1,\ldots,f'_n\}$. Since $f'c=\{\omega,c_2,\ldots,c_n\}$, we can order the factors so that $f'_1c=\omega$ and $f'_ic=c_i$ for all $i\geq 2$.

Fix a metric $d$ on $X$. Since $\bar{U}$ is compact, the maps $g,f'_i:\bar{U}\to Y$ are uniformly continuous. Thus, there is a $\delta>0$ such that for all $x,y\in\bar{U}$ with $d(x,y)<\delta$, \begin{align*}
\rho(g(x),g(y))&<\min\{\tfrac{\alpha}{4},\tfrac{\beta}{3},\tfrac{\eta}{3}\} \\
\rho(f'_1(x),f'_1(y))&<\min\{\tfrac{\alpha}{4},\tfrac{\beta}{3}\} \\
\rho(f'_i(x),f'_i(y))&<\tfrac{\eta}{3} \quad (i\geq 2).
\end{align*}
We can make $\bar{U}$ smaller so that it is contained in the $\delta$-neighborhood of $c(I)$.

According to Schirmer (see \cite[Proof of Hilfssatz 9]{schirmer-coin}), we may refine the triangulation of $X$ to obtain a strongly connected homogeneous $m$-dimensional subcomplex $A\subseteq U$ with $p,q\in \mathring{A}$. For all $x\in \partial A$, we can choose $y=c(s)\in c(I)$ such that $d(x,y)<\delta$. Using this, we find \begin{align*}
\rho(f'_1(x),g(x))&\leq \rho(f'_1(x),f'_1(y))+\rho(f'_1(y),g(y))+\rho(g(y),g(x)) \\
&=\rho(f'_1(x),f'_1(y))+\rho(\omega(s),gc(s))+\rho(g(y),g(x)) \\
&<\tfrac{\beta}{3}+\tau+\tfrac{\beta}{3} \leq \tfrac{\beta}{3}+\tfrac{\beta}{3}+\tfrac{\beta}{3}=\beta.
\end{align*}
On the other hand, $\rho(f'_1(x),g(x))>0$, since $\Coin(f'_1,g)\subseteq\Coin(f',g)=\Coin(f,g)$ and the only coincidences of $f$ and $g$ in $\bar{U}$ are $p$ and $q$, which do not lie on $\partial A$.

Thus, by Lemma \ref{lem:schirmer-SatzVIIIa}, there is a map $g'_0:A\to Y$, equal to $g$ on $\partial A$, satisfying $\rho(f'_1|_A,g'_0)<\frac{\alpha}{4}$, with either no coincidences with $f'_1|_A$, or one coincidence point $x$ with $\ind_1(f'_1,g',x)\neq 0$.

We can extend $g'_0$ to $X$ by defining \[
g':X\to Y:x\mapsto\begin{cases}
g'_0(x) & \text{if } x\in A \\
g(x) & \text{otherwise.}
\end{cases}
\]
For $x\in A$, 
\begin{align*}
\rho(g(x),g'(x))&=\rho(g(x),g'_0(x)) \\ &\leq \rho(g(x),f'_1(x))+\rho(f'_1(x),g'_0(x)) \\
&<\rho(g(x),f'_1(x))+\tfrac{\alpha}{4}
\end{align*}
where, with $y=c(s)\in c(I)$ such that $d(x,y)<\delta$, \begin{align*}
\rho(g(x),f'_1(x))&\leq \rho(g(x),g(y))+\rho(g(y),f'_1(y))+\rho(f'_1(y),f'_1(x)) \\
&=\rho(g(x),g(y))+\rho(gc(s),\omega(s))+\rho(f'_1(y),f'_1(x)) \\
&<\tfrac{\alpha}{4}+\tau+\tfrac{\alpha}{4}\leq \tfrac{3\alpha}{4},
\end{align*}
so $\rho(g(x),g'(x))<\alpha$. On the other hand, for $x\in X\setminus A$, \[
\rho(g(x),g'(x))=\rho(g(x),g(x))=0<\alpha.
\]
Thus, $\rho(g,g')<\alpha$, so it follows from Lemma \ref{lem:schirmer-Hilfssatz3} that $g'$ is $\tfrac{\eta}{3}$-homotopic to $g$. 

Lastly, for all $i\geq 2$, for all $x\in A$, with $y=c(s)\in c(I)$ such that $d(x,y)<\delta$, \begin{align*}
\rho(g'(x),f'_i(x))&\geq \rho(g(y),f'_i(y))-\rho(f'_i(y),f'_i(x))-\rho(g(y),g(x))-\rho(g(x),g'(x)) \\
&= \rho(gc(s),c_i(s))-\rho(f'_i(y),f'_i(x))-\rho(g(y),g(x))-\rho(g(x),g'(x)) \\
&>\eta-\tfrac{\eta}{3}-\tfrac{\eta}{3}-\tfrac{\eta}{3}=0,
\end{align*}
so $\Coin(f',g',A)=\Coin(f'_1,g',A)$. Since \[
\Coin(f',g',\bar{U}\setminus A)=\Coin(f',g,\bar{U}\setminus A)=\varnothing,
\]
it follows that $\Coin(f',g',\bar{U})=\Coin(f'_1,g',A)$. Recall that this set is either empty or consists of one point $x$ with $\ind_1(f'_1,g',x)\neq 0$. In the latter case, by Remark \ref{rmk:local-index}, $\ind(f',g',x)=\ind_1(f'_1,g',x)$ is non-zero as well.
\end{proof}

\begin{proof}[Proof of Theorem \ref{thm:Wecken}]
Let $X$, $Y$, $f$ and $g$ be as in the statement of the theorem. By Corollary \ref{cor:coin-finite}, we may assume $\Coin(f,g)$ is finite, so all coincidence points are isolated. 

Given two isolated coincidence points $p$ and $q$ in the same coincidence class of $(f,g)$, the above procedure yields a set $\bar{U}\subseteq X$ with $\Coin(f,g,\bar{U})=\{p,q\}$ and a pair $(f',g')$ homotopic to $(f,g)$ such that \[
\Coin(f',g',X\setminus \bar{U})=\Coin(f,g,X\setminus \bar{U})
\]
and $\Coin(f',g',\bar{U})$ is either empty, or contains one coincidence point with non-zero index.

Since $\Coin(f,g)$ is finite, we can apply this procedure inductively for every coincidence class, resulting in a pair $(f',g')$ homotopic to $(f,g)$, each of whose coincidence classes is either empty or consists of one point with non-zero index. The empty coincidence classes have index zero, and for the ones containing one point, the index of that coincidence class equals the index of that point, which is non-zero. Thus, the number of coincidence points of $(f',g')$ is precisely the number of coincidence classes with non-zero index, i.e.\ $N(f',g')$. Since the Nielsen number is homotopy-invariant, this is equal to $N(f,g)$.
\end{proof}

\subsection{Uniqueness and equivalent definition of the index}

As another application of Theorem \ref{thm:coin-finite}, we show that the properties of Theorem \ref{thm:index} can be taken as axioms for the coincidence index, similar to the fixed point case \cite{staecker2018}:

\begin{thm}\label{thm:unique-index}
For fixed closed orientable triangulable manifolds $X$ and $Y$ of equal dimension, and for a fixed $n$, let $\mathcal{C}_n(X,Y)$ be the set of admissible triples $(f:X\to D_n(Y),\,g:X\to Y,\,U\subseteq X)$. The coincidence index is the unique function \[
\ind:\mathcal{C}_n(X,Y) \to \RR
\]
satisfying
\begin{itemize}
\item[\rm(i)] {\sc Homotopy invariance} 

\noindent
If $\{(f_t,g_t)\mid t\in I \}$ is an admissible homotopy in $U$, then \[
\ind(f_0,g_0,U)=\ind(f_1,g_1,U).
\]

\item[\rm(ii)] {\sc Additivity} 

\noindent
If $U_1,U_2\subseteq U$ are disjoint opens such that $\Coin(f,g,U)\subseteq U_1\cup U_2$, then \[
\ind(f,g,U)=\ind(f,g,U_1)+\ind(f,g,U_2).
\]

\item[\rm(iii)] {\sc Splitting property} 

\noindent
If $f=\{f_1,\ldots,f_n\}$ on $U$, then \[
\ind(f,g,U)=\sum_{i=1}^n \ind_1(f_i,g,U).
\]
\end{itemize}
\end{thm}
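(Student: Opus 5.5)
Existence is already settled by Theorem \ref{thm:index}, so only uniqueness needs proof. Let $\ind'$ be any function $\mathcal{C}_n(X,Y)\to\RR$ satisfying (i)--(iii); we show $\ind'(f,g,U)=\ind(f,g,U)$ for every admissible triple.

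First, as in the proof of Corollary \ref{cor:index}, additivity gives $\ind'(f,g,\varnothing)=0$. Applying (ii) with $U_1=V$ and $U_2=\varnothing$ then yields excision: $\ind'(f,g,U)=\ind'(f,g,V)$ whenever $V\subseteq U$ is open and $\Coin(f,g,U)\subseteq V$.

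Next, reduce to a set $U$ with no coincidences on its boundary. Since $\Coin(f,g,U)$ is compact and contained in the open set $U$, choose an open $V$ with $\Coin(f,g,U)\subseteq V\subseteq\bar V\subseteq U$. Then $\Coin(f,g,\partial V)=\varnothing$, because $\partial V\subseteq U\setminus V$. By excision, both $\ind'$ and $\ind$ take the same value on $U$ as on $V$, so we may replace $U$ by $V$.

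Now apply Theorem \ref{thm:coin-finite} to $V$. It gives $g'\simeq g$ such that $(f,g)$ and $(f,g')$ are admissibly homotopic in $V$ and $\Coin(f,g',V)=\{x_1,\ldots,x_k\}$ is finite. Homotopy invariance (i) for both functions reduces the problem to the triple $(f,g',V)$.

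The index is then localised. Around each $x_j$ choose pairwise disjoint, connected, simply connected open neighbourhoods $U_j\subseteq V$; for instance small coordinate balls. On each $U_j$ the map $f$ splits, by \cite[Proposition 2.2]{browngoncalves}, as $\{f_1,\ldots,f_n\}$. Additivity (ii), extended by induction to finitely many disjoint open sets, gives
\[
\ind'(f,g',V)=\sum_{j=1}^k \ind'(f,g',U_j).
\]
The splitting property (iii) then gives
\[
\ind'(f,g',U_j)=\sum_{i=1}^n \ind_1(f_i,g',U_j).
\]
The same two identities hold for $\ind$, by Theorem \ref{thm:index}. The right-hand sides are expressed entirely through the single-valued index, so they coincide, and hence $\ind'(f,g,U)=\ind(f,g,U)$.

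The main obstacle is the admissibility bookkeeping. We must check that each triple used is admissible, and that the homotopy produced by Theorem \ref{thm:coin-finite} is admissible in $V$ in the sense of our definition. Theorem \ref{thm:coin-finite} supplies exactly this, and Remark \ref{rmk:def-admissible-homotopy} shows our notion of admissible homotopy agrees with the one in \cite{goncalves-htfpt}. The triples $(f,g',U_j)$ are admissible because each $\Coin(f,g',U_j)=\{x_j\}$ is finite, hence compact. Once these points are checked, the argument is formal.
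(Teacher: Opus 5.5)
Your proposal is correct and follows the paper's proof essentially step for step. Both arguments shrink $U$ to an open $V$ with $\bar V\subseteq U$ so that Theorem \ref{thm:coin-finite} applies, then use excision (derived from additivity) and homotopy invariance to pass to $(f,g',V)$ with finitely many coincidences, and finally use additivity over disjoint splitting neighbourhoods and the splitting property to express the value purely through $\ind_1$. The only difference is cosmetic: you compare two functions satisfying the axioms, whereas the paper observes directly that the value is forced by them.
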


\begin{proof}
Suppose $\ind$ satisfies the three axioms, and let $(f,g,U)$ be an admissible triple. Since $U$ is open and $\Coin(f,g,U)$ is compact, we can find an open set $V$ such that \[
\Coin(f,g,U)\subseteq V\subseteq \bar{V}\subseteq U,
\]
so $\Coin(f,g,U)=\Coin(f,g,V)$ and $\Coin(f,g,\partial V)=\varnothing$. By Theorem \ref{thm:coin-finite}, there exists a homotopy $(f,g)\simeq (f,g')$ that is admissible in $V$ such that $\Coin(f,g',V)$ is finite. By the excision property (Corollary \ref{cor:index} (iv), which follows from the axioms) and homotopy invariance, \[
\ind(f,g,U)=\ind(f,g,V)=\ind(f,g',V).
\]
Say $\Coin(f,g',V)=\{x_1,\ldots,x_k\}$, and choose pairwise disjoint open neighborhoods $U_1,\ldots,U_k\subseteq V$ of the points $x_1,\ldots,x_k$ such that $f$ splits on $U_j$, for each $j$. Let $\displaystyle\{f^j_1,\ldots,f^j_n\}$ 
be a splitting of $f$ on $U_j$. By additivity, \[
\ind(f,g,U)=\sum_{j=1}^{k} \ind(f,g',U_j).
\]
By the splitting property, \[
\ind(f,g,U)=\sum_{j=1}^{k}\sum_{i=1}^{n} \ind_1(f_i^j,g',U_j).
\]
Thus, the value of $\ind(f,g,U)$ is entirely determined by the properties (i)--(iii). This shows that there can be at most one function $\ind$ satisfying the properties; from Theorem \ref{thm:index} we already know that our coincidence index from section \ref{subsec:ind} is such a function; hence it is the unique one.
\end{proof}

\begin{rmk}
The above construction shows that the index is integer-valued, which was not clear a priori from our definition.
\end{rmk}

\begin{rmk}\label{rmk:ind-equiv}
The above construction can be used as an equivalent definition for the index: take an admissible homotopy $(f,g)\simeq (f,g')$ in $U$ such that $\Coin(f,g',U)$ is finite; around these coincidence points, take isolating neighborhoods $U_1,\ldots,U_k\subseteq U$ on which $f$ splits, and compute \[
\ind(f,g,U)=\sum_{j=1}^{k}\sum_{i=1}^{n} \ind_1(f_i^j,g',U_j).
\]
Note that for each $j$, there is precisely one $i_j\in \{1,\ldots,n\}$ such that $f_{i_j}^j(x_j)=g(x_j)$; therefore we can also write \[
\ind(f,g,U)=\sum_{j=1}^{k} \ind_1(f_{i_j}^j,g',x_j).
\]

This is precisely the analog of how Schirmer defined the $n$-valued fixed point index in \cite{schirmer2}. The advantage of our approach is that we do not have to show this is well-defined, i.e.\ that it is independent of the chosen homotopy and of the choice of opens $U_j$; this follows immediately by uniqueness.
\end{rmk}

\section{Averaging formulas}\label{sec:av}

In this section, we express the invariants $N(f,g)$, $R(f,g)$ and $L(f,g)$ in terms of their single-valued counterparts for the pairs $(\hat{f}_i,\hat{g})$.

\subsection{Averaging formula for the Lefschetz number}

From the definition of the index and the Lefschetz number, we immediately get:

\begin{thm}
Let $f:X\to D_n(Y)$ be an $n$-valued map and $g:X\to Y$ a map.
Let $\hat{X}$ be the splitting space of $f$, with covering map $\hat{p}:\hat{X}\to X$, and write $f\circ \hat{p}=\{\hat{f}_1,\ldots,\hat{f}_n\}$ and $g\circ \hat{p}=\hat{g}$. Then
\[
L(f,g)=\frac{1}{[\Pi:S]}\sum_{i=1}^n L(\hat{f}_i,\hat{g}).
\]
\end{thm}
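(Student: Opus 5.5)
This is essentially an unwinding of definitions. By Definition \ref{def:L}, $L(f,g)=\ind(f,g,X)$. The triple $(f,g,X)$ is admissible because $\Coin(f,g)$ is closed in the compact space $X$ (Lemma \ref{lem:coin-closed}). Applying the definition of the index from Section \ref{subsec:ind} with $U=X$, and using $\hat{p}^{-1}(X)=\hat{X}$, gives
\[
L(f,g)=\frac{1}{[\Pi:S]}\sum_{i=1}^n \ind_1(\hat{f}_i,\hat{g},\hat{X}).
\]

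It remains to identify each term $\ind_1(\hat{f}_i,\hat{g},\hat{X})$ with the single-valued Lefschetz coincidence number $L(\hat{f}_i,\hat{g})$. The maps $\hat{f}_i,\hat{g}:\hat{X}\to Y$ are single-valued maps between closed orientable manifolds of equal dimension. Indeed, $\hat{X}$ is a finite cover of the closed orientable triangulable manifold $X$, so it is itself closed and triangulable, and we give it the pulled-back orientation so that $\hat{p}$ is orientation-preserving, as was already used in the proof of Theorem \ref{thm:index} (iii). For such single-valued pairs, classical coincidence theory gives $\ind_1(\hat{f}_i,\hat{g},\hat{X})=L(\hat{f}_i,\hat{g})$ (see e.g.\ \cite{goncalves-htfpt}). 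Alternatively, this holds by Definition \ref{def:L} together with Remark \ref{rmk:1valued-index} in the case $n=1$, where our Lefschetz number is by definition the global single-valued index. Substituting this identification into the display above gives the formula.

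There is no real obstacle here. The only point requiring care is the convention: $L(\hat{f}_i,\hat{g})$ should be read as the single-valued Lefschetz coincidence number on $\hat{X}$ with the orientation induced from $X$, which agrees with the global index. If one instead wants the classical homological definition, one invokes the single-valued Lefschetz coincidence theorem, which says the homological Lefschetz number equals the total coincidence index.
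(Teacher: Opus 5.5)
Your proposal is correct and follows the paper's proof exactly: the paper also writes $L(f,g)=\ind(f,g,X)$, expands it by the definition of the index with $U=X$ and $\hat{p}^{-1}(X)=\hat{X}$, and identifies each $\ind_1(\hat{f}_i,\hat{g},\hat{X})$ with $L(\hat{f}_i,\hat{g})$. Your added remarks on admissibility of $(f,g,X)$ and on giving $\hat{X}$ the orientation pulled back along $\hat{p}$ supply details the paper leaves implicit.
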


\begin{proof}
We have \begin{align*}
L(f,g) &= \ind(f,g,X) \\
&= \frac{1}{[\Pi:S]}\sum_{i=1}^n \ind_1(\hat{f}_i,\hat{g},\hat{X}) \\
&= \frac{1}{[\Pi:S]}\sum_{i=1}^n L(\hat{f}_i,\hat{g}). \qedhere
\end{align*}
\end{proof}

\begin{rmk}\label{rmk:av-L}
The Lefschetz numbers of the single-valued maps $\hat{f}_i$ and $\hat{g}$ can be computed using the classical definition of the single-valued Lefschetz number: let $(\hat{f}_i)_q:H_q(\hat{X},\QQ)\to H_q(Y,\QQ)$ be the morphisms on homology induced by $\hat{f}_i$, and $\hat{g}^q:H^q(Y,\QQ)\to H^q(\hat{X},\QQ)$ the morphisms on cohomology induced by $\hat{g}$. Let $D_{\hat{X}}:H_q(\hat{X})\to H^{n-q}(\hat{X})$ and $D_Y:H_q(Y)\to H^{n-q}(Y)$ denote the Poincaré duality isomorphisms. Then \[
L(\hat{f}_i,\hat{g})=\sum_q (-1)^q \,\text{tr}(D_{\hat{X}}^{-1}\circ \hat{g}^{n-q}\circ D_Y\circ (\hat{f}_i)_q).
\]
Thus, the Lefschetz number $L(f,g)$ can equivalently be computed as \[
L(f,g)=\frac{1}{[\Pi:S]}\sum_{i=1}^n \sum_q (-1)^q \,\text{tr}(D_{\hat{X}}^{-1}\circ \hat{g}^{n-q}\circ D_Y\circ (\hat{f}_i)_q).
\]

In the setting of fixed points of $n$-valued maps, the Lefschetz number was defined in a way generalising the classical single-valued definition, by Brown \cite{brown2007}. Given an $n$-valued map $f:X\to D_n(X)$ on a finite polyhedron $X$, he defines \[
L(f)=\sum_{q} (-1)^q \,\text{tr}(f_{*q})
\] 
where the morphisms $f_{*q}:H_q(X,\QQ)\to H_q(X,\QQ)$ are defined as follows. Let $f':X=|K|\to Y=|L|$ be a simplicial map homotopic to $f$, where simplicial means: for any simplex $\sigma\in K$, the restriction of $f'$ to $|\bar{\sigma}|$ (the geometric realisation of the closure of $\sigma$) splits as $\{f'_1,\ldots,f'_n\}$, where each map $f'_i:|\bar{\sigma}|\to |L|$ is simplicial. One obtains a chain map defined on $k$-simplices by \[
C_k(K)\to C_k(L):\sigma\mapsto \tau_1+\ldots+\tau_n
\]
with $\bar{\tau}_i=f'_i(\bar{\sigma})$ for $i\in\{1,\ldots,n\}$; its induced homology morphism is the map $f_{*}:H_*(X,\QQ)\to H_*(X,\QQ)$.

We can now show that our Lefschetz number (which we originally defined as $L(f,g)=\ind(f,g,X)$) satisfies \begin{equation}\label{eq:L-tr}
L(f,g)=\sum_{q} (-1)^q \,\text{tr}(D_X^{-1}\circ g^{n-q}\circ D_Y\circ f_{*q}).
\end{equation}
Thus, in the special case of fixed points of $n$-valued maps, our Lefschetz number coincides with Brown's. (Also, it follows that Brown's Lefschetz number satisfies $L(f)=\ind(f,X)$, which was not shown in \cite{brown2007}.)

To show \eqref{eq:L-tr}, we observe that for each $q$, \begin{equation}\label{eq:L-tr-help}
\sum_{i=1}^n (\hat{f}_i)_q = f_{*q}\circ \hat{p}_q.
\end{equation}
Indeed, let $f'$ and $\hat{p}'$ be simplicial maps homotopic to $f$ and $\hat{p}$, respectively. Then $f'\circ \hat{p}'=\vcentcolon \hat{f}'$ is a simplicial map homotopic to $\hat{f}$, which also splits as $\{\hat{f}'_1,\ldots,\hat{f}'_n\}$ with (up to reordering) $\hat{f}'_i$ homotopic to $\hat{f}_i$ for each $i$. The morphism $\hat{p}_q:H_q(\hat{X},\QQ)\to H_q(X,\QQ)$ is induced by the chain map sending a simplex $\hat{\sigma}$ in $\hat{X}$ to the simplex $\sigma$ in $X$ such that $\hat{p}'(\bar{\hat{\sigma}})=\bar{\sigma}$. On $\bar{\sigma}$, the map $f'$ splits as $\{f'_1,\ldots,f'_n\}$, and the chain map that induces $f_{*q}$ sends $\sigma$ to $\tau_1+\ldots+\tau_n$ with $f'_i(\bar{\sigma})=\bar{\tau}_i$ for all $i$. Since $f'\circ \hat{p}'=\vcentcolon \hat{f}'$, we may reorder the factors $f'_i$ such that $f'_i\circ \hat{p}'=\hat{f}'_i$ for all $i$, hence $\hat{f}'_i(\bar{\hat{\sigma}})=\bar{\tau}_i$ for all $i$. Then the chain map that induces $(\hat{f}_i)_q$ sends $\hat{\sigma}$ to $\tau_i$, for all $i$. On the other hand, by the above, the chain map that induces $f_{*q}\circ \hat{p}_q$ sends $\hat{\sigma}$ to $\tau_1+\ldots+\tau_n$. So \eqref{eq:L-tr-help} holds for the chain maps defining the homology morphisms, hence it must also hold for the morphisms themselves.

On the other hand, the composition \[
\hat{p}_q\circ D_{\hat{X}}^{-1}\circ \hat{p}^{n-q}\circ D_X
\]
is given by multiplication with the degree of $\hat{p}$, i.e.\ $[\Pi:S]$ (see e.g.\ \cite[Proposition 2.6(4)]{delaharpe}), so we have 
\begin{align*}
L(f,g)&=\frac{1}{[\Pi:S]}\sum_{i=1}^n \sum_q (-1)^q \,\text{tr}(D_{\hat{X}}^{-1}\circ \hat{g}^{n-q}\circ D_Y\circ (\hat{f}_i)_q) \\
&=\frac{1}{[\Pi:S]} \sum_q (-1)^q \,\text{tr}(D_{\hat{X}}^{-1}\circ \hat{g}^{n-q}\circ D_Y\circ \sum_{i=1}^n(\hat{f}_i)_q) \\
&=\frac{1}{[\Pi:S]} \sum_q (-1)^q \,\text{tr}(D_{\hat{X}}^{-1}\circ (g\circ \hat{p})^{n-q}\circ D_Y\circ f_{*q}\circ \hat{p}_q) \\
&=\frac{1}{[\Pi:S]} \sum_q (-1)^q \,\text{tr}(\hat{p}_q\circ D_{\hat{X}}^{-1}\circ \hat{p}^{n-q} \circ g^{n-q}\circ D_Y\circ f_{*q}) \\
&=\frac{1}{[\Pi:S]} \sum_q (-1)^q \,\text{tr}([\Pi:S] D_X^{-1}\circ g^{n-q}\circ D_Y\circ f_{*q}) \\
&=\sum_q (-1)^q \,\text{tr}(D_X^{-1}\circ g^{n-q}\circ D_Y\circ f_{*q}).
\end{align*}
\end{rmk}

\subsection{Comparing coincidence classes of $(f,g)$ and $(\hat{f}_i,\hat{g})$}\label{subsec:comparing}

From now on, fix a lift $\tilde{f}=(\tilde{f}_1,\ldots,\tilde{f}_n):\tilde{X}\to F_n(\tilde{Y},\Delta)$ for $f$ with induced morphism $\varphi=(\varphi_1,\ldots,\varphi_n;\sigma):\Pi\to \Delta^n\rtimes \S_n$, and a lift $\tilde{g}:\tilde{X}\to \tilde{Y}$ for $g$ with induced morphism $\psi:\Pi\to \Delta$. 

We may order the factors of $\hat{f}=\{\hat{f}_1,\ldots,\hat{f}_n\}$ so that $\tilde{f}_i$ is a lift of $\hat{f}_i$, for each $i$. The associated morphism of covering groups is $\varphi'_i:S\to \Delta$, the restriction of $\varphi_i$: \[
\forall \gamma\in S: \; \tilde{f}_i\gamma=\varphi_i(\gamma)\tilde{f}_{\sigma_\gamma^{-1}(i)}=\varphi_i(\gamma)\tilde{f}_i.
\]
Similarly, $\tilde{g}$ is a lift of $\hat{g}$ with morphism $\psi':S\to \Delta$, the restriction of $\psi$. The decomposition of $\Coin(\hat{f}_i,\hat{g})$ into coincidence classes is \[
\Coin(\hat{f}_i,\hat{g})=\bigsqcup_{[\alpha]\in \R[\varphi'_i,\psi']}\check{p}\Coin(\alpha\tilde{f}_i,\tilde{g}),
\]
where $\R[\varphi'_i,\psi']$ is the set of equivalence classes of $\Delta$ for \[
\alpha\sim_{\varphi'_i,\psi'} \beta \iff \exists \gamma\in S: \alpha=\psi(\gamma)\beta\varphi_i(\gamma^{-1}).
\]

On the other hand, recall that \[
\Coin(f,g)=\bigsqcup_{[(\alpha,i)]\in \R[\varphi,\psi]} p\Coin(\alpha\tilde{f}_i,\tilde{g})
\]
where \[
(\alpha,i)\sim_{\varphi,\psi} (\beta,j) \iff \exists \gamma\in \Pi:i=\sigma_\gamma(j),\; \alpha=\psi(\gamma)\beta\varphi_j(\gamma^{-1}).
\]
To further decompose this union, note that the morphism $\sigma:\Pi\to \S_n$ defines an equivalence relation $\sim$ on $\{1,\ldots,n\}$ by \[
i\sim j \iff \exists \gamma\in \Pi:i=\sigma_\gamma(j).
\]
We call the equivalence classes for this relation the \emph{$\sigma$-classes}, and write the $\sigma$-class of $i$ as $[i]_\sigma$. For each $i\in \{1,\ldots,n\}$, let \[
S_i=\{\gamma\in \Pi \mid \sigma_\gamma(i)=i \}
\]
be the stabiliser of $i$, which is a finite index subgroup of $\Pi$.

If $i=\sigma_\gamma(j)$, then for all $\alpha\in \Delta$, we have $(\alpha,i)\sim_{\varphi,\psi}(\alpha^\gamma,j)$ with \[
\alpha^\gamma=\psi(\gamma)^{-1}\alpha\varphi_i(\gamma).
\]
Indeed, \begin{align*}
\alpha&=\psi(\gamma)\alpha^\gamma\varphi_i(\gamma)^{-1}\\&=\psi(\gamma)\alpha^\gamma\varphi_{\sigma_\gamma^{-1}(i)}(\gamma^{-1})\\&=\psi(\gamma)\alpha^\gamma\varphi_j(\gamma^{-1}).
\end{align*}
In particular, if $i\sim j$, then \[
\{[(\alpha,i)]_{\varphi,\psi} \mid \alpha\in \Delta \}=\{[(\alpha,j)]_{\varphi,\psi} \mid \alpha\in \Delta \}.
\]
On the other hand, for a fixed $i\in \{1,\ldots,n\}$, we have \begin{align*}
(\alpha,i)\sim_{\varphi,\psi} (\beta,i) &\iff \exists \gamma\in \Pi:\sigma_\gamma(i)=i,\;\alpha=\psi(\gamma)\beta\varphi_i(\gamma^{-1}) \\
&\iff \exists \gamma\in S_i:\alpha=\psi(\gamma)\beta\varphi_i(\gamma^{-1}).
\end{align*}
We will (slightly abusively) write this as $\alpha\sim_{\varphi_i,\psi}\beta$, and $\Delta/{\sim}_{\varphi_i,\psi}=\R[\varphi_i,\psi]$.
Together, we can write \[
\Coin(f,g)=\bigsqcup_{[i]_\sigma} \bigsqcup_{[\alpha]\in \R[\varphi_i,\psi]} p\Coin(\alpha\tilde{f}_i,\tilde{g}).
\]

If $i=\sigma_\gamma(j)$, note that \begin{align*}
\alpha\sim_{\varphi_i,\psi}\beta &\iff (\alpha,i)\sim_{\varphi,\psi}(\beta,i) \\
&\iff (\alpha^\gamma,j)\sim_{\varphi,\psi}(\beta^\gamma,j) \\
&\iff \alpha^\gamma\sim_{\varphi_j,\psi}\beta^\gamma.
\end{align*}
Thus, the assignment $\alpha\mapsto \alpha^\gamma$ defines a one-to-one correspondence between $\R[\varphi_i,\psi]$ and $\R[\varphi_j,\psi]$. 

The following lemma gives an explicit decomposition of the lifts of coincidence classes of $(f,g)$ in terms of coincidence classes of $(\hat{f}_i,\hat{g})$.
We will not explicitly need this to prove our averaging formulas, but it will serve an important purpose in section \ref{sec:roots}, where we consider the special case of roots.

\begin{lem}\label{lem:coin-ij}
If $i\not\sim j$, then \[
\Coin(\hat{f}_j,\hat{g})\cap \hat{p}^{-1}p\Coin(\alpha\tilde{f}_i,\tilde{g})=\varnothing
\] 
for all $\alpha\in \Delta$. If $i\sim j$, say $i=\sigma_\gamma(j)$, then \[
\Coin(\hat{f}_j,\hat{g})\cap \hat{p}^{-1}p\Coin(\alpha\tilde{f}_i,\tilde{g})=\bigsqcup_{\substack{[\beta]\in \R[\varphi'_j,\psi'] \\ \beta\sim_{\varphi_j,\psi}\,\alpha^\gamma}} \check{p}\Coin(\beta\tilde{f}_j,\tilde{g})
\]
for all $\alpha\in \Delta$.
\end{lem}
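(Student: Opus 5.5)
The plan is to work entirely upstairs in $\tilde{X}$ and describe membership in each side by lifting-pair coincidences. Write $\check{p}:\tilde{X}\to\hat{X}$, so that $p=\hat{p}\circ\check{p}$. The key description is
\[
\hat{p}^{-1}p\Coin(\alpha\tilde{f}_i,\tilde{g})=\check{p}\Bigl(\bigcup_{\gamma'\in\Pi}\gamma'\Coin(\alpha\tilde{f}_i,\tilde{g})\Bigr),
\]
since two points of $\tilde X$ have the same image under $p$ exactly when they differ by an element of $\Pi$. Moreover, by Proposition \ref{prop:cc-partition}(i) applied to $(\psi(\gamma')\alpha\tilde f_i\gamma'^{-1},\tilde g)$, the set $\gamma'\Coin(\alpha\tilde{f}_i,\tilde{g})$ equals $\Coin(\psi(\gamma')\alpha\tilde f_i\gamma'^{-1},\tilde g)$. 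Using the definition of $\varphi$, this becomes $\Coin(\alpha'\tilde f_k,\tilde g)$, where $k=\sigma_{\gamma'}(i)$ and $(\alpha',k)\sim_{\varphi,\psi}(\alpha,i)$ as in Proposition \ref{prop:cc-equiv-R}. Hence the preimage is $\check p$ of the union of $\Coin(\beta\tilde f_k,\tilde g)$ over all $(\beta,k)\sim_{\varphi,\psi}(\alpha,i)$.

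Next I would intersect with $\Coin(\hat f_j,\hat g)=\bigsqcup_{\beta\in\Delta}\check p\Coin(\beta\tilde f_j,\tilde g)$. This description holds because $\tilde f_j$ lifts $\hat f_j$, so every lifting pair of $(\hat f_j,\hat g)$ is equivalent to some $(\beta\tilde f_j,\tilde g)$. The sets $\check p\Coin(\beta\tilde f_j,\tilde g)$ and $\check p\Coin(\beta'\tilde f_k,\tilde g)$ intersect only if $k=j$ and $\beta'\sim_{\varphi'_j,\psi'}\beta$. To see this, suppose $\check p(\tilde x)=\check p(\tilde x')$; then $\tilde x'=s\tilde x$ with $s\in S$. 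Translating by $s$ preserves the index $j$ because $S=\ker\sigma$. Since $\hat f_j(\hat x)\neq\hat f_k(\hat x)$ for $j\neq k$, we must have $k=j$. Uniqueness of lifting pairs, Lemma \ref{lem:unique-lift}, then gives $\beta'=\psi(s)\beta\varphi_j(s^{-1})$.

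Combining these two steps, the intersection is the union of $\check p\Coin(\beta\tilde f_j,\tilde g)$ over those $\beta$ with $(\beta,j)\sim_{\varphi,\psi}(\alpha,i)$. If $i\not\sim j$, no such $\beta$ exists, because $\sim_{\varphi,\psi}$ requires $j=\sigma_\gamma(i)$ for some $\gamma$; this gives the empty intersection. If $i=\sigma_\gamma(j)$, then $(\alpha,i)\sim_{\varphi,\psi}(\alpha^\gamma,j)$, as established just before the lemma. So the condition becomes $(\beta,j)\sim_{\varphi,\psi}(\alpha^\gamma,j)$, which is precisely $\beta\sim_{\varphi_j,\psi}\alpha^\gamma$.

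Finally, I would group the $\beta$'s into $\sim_{\varphi'_j,\psi'}$-classes. This is legitimate because $\sim_{\varphi'_j,\psi'}$ refines $\sim_{\varphi_j,\psi}$, as $S\subseteq S_j$. Disjointness of the resulting union follows from the coincidence-class partition of $(\hat f_j,\hat g)$. The main point needing care is the first step: correctly computing the translate $\gamma'\Coin(\alpha\tilde f_i,\tilde g)$ and matching indices via $\sigma_{\gamma'}$. I would verify it by repeating the calculation in the proof of Proposition \ref{prop:cc-equiv-R}.
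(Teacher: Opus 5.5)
Your argument is correct, but it is organised differently from the paper's proof.

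\textbf{The paper's route.} The paper never returns to $\tilde X$. It uses $\hat p\Coin(\hat f_j,\hat g)=\bigcup_{\beta}p\Coin(\beta\tilde f_j,\tilde g)$ together with the already-proved partition of $\Coin(f,g)$ downstairs in $X$ (Propositions \ref{prop:cc-partition} and \ref{prop:cc-equiv-R}).
\begin{itemize}
\item When $i\not\sim j$, each $p\Coin(\beta\tilde f_j,\tilde g)$ is a different coincidence class from $p\Coin(\alpha\tilde f_i,\tilde g)$, hence disjoint from it.
\item When $i=\sigma_\gamma(j)$, it replaces $(\alpha,i)$ by $(\alpha^\gamma,j)$. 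Then, class by class in $\R[\varphi'_j,\psi']$, it gets disjointness if $\beta\not\sim_{\varphi_j,\psi}\alpha^\gamma$ and inclusion if $\beta\sim_{\varphi_j,\psi}\alpha^\gamma$.
\end{itemize}

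\textbf{Your route.} You compute the saturated preimage explicitly through the $\Pi$-translates,
\[
\hat p^{-1}p\Coin(\alpha\tilde f_i,\tilde g)=\check p\bigl(\textstyle\bigcup_{(\beta,k)\sim_{\varphi,\psi}(\alpha,i)}\Coin(\beta\tilde f_k,\tilde g)\bigr),
\]
and then cut down to $k=j$ using the splitting of $\hat f$.

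\textbf{Comparison.} The paper's proof is shorter because it reuses the partition result. Yours re-derives the relevant part of it, but in exchange it describes the whole lifted class $\hat p^{-1}(C)$ at once, as a disjoint union over all $j$ in the $\sigma$-class of $i$. That is exactly the form in which the lemma is used afterwards: in the rewriting of the class index just after the lemma, and in the root-class discussion of Section \ref{sec:roots}.

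\textbf{Minor points.}
\begin{itemize}
\item Proposition \ref{prop:cc-partition}(i) only asserts equality of the $p$-images. The set-level identity $\gamma'\Coin(\alpha\tilde f_i,\tilde g)=\Coin(\psi(\gamma')\alpha\tilde f_i\gamma'^{-1},\tilde g)$ comes from the one-line computation in its proof, together with symmetry; that computation is what you should cite.
\item The union $\bigcup_{\beta\in\Delta}\check p\Coin(\beta\tilde f_j,\tilde g)$ is not disjoint when indexed over $\Delta$; it is disjoint only when indexed over $\R[\varphi'_j,\psi']$.
\item Your intersection step does not need the $S$-equivalence argument. Since $\check p\Coin(\beta'\tilde f_k,\tilde g)\subseteq\Coin(\hat f_k,\hat g)$, and $\Coin(\hat f_j,\hat g)\cap\Coin(\hat f_k,\hat g)=\varnothing$ for $k\neq j$, intersecting with $\Coin(\hat f_j,\hat g)$ simply keeps the terms with $k=j$ and discards the rest.
\end{itemize}
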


\begin{proof}
For all $j\in \{1,\ldots,n\}$, \[
\hat{p}\Coin(\hat{f}_j,\hat{g})=\hat{p}\left(\bigcup_{\beta\in \Delta}\check{p}\Coin(\beta\tilde{f}_j,\tilde{g})\right)=\bigcup_{\beta\in \Delta}p\Coin(\beta\tilde{f}_j,\tilde{g}).
\]
If $i\not\sim j$, we know $(\alpha,i)\not\sim (\beta,j)$ for any $\alpha,\beta\in \Delta$, so \[
p\Coin(\alpha\tilde{f}_i,\tilde{g})\cap p\Coin(\beta\tilde{f}_j,\tilde{g})=\varnothing
\]
for all $\beta\in \Delta$, and therefore \[
\hat{p}\Coin(\hat{f}_j,\hat{g})\cap p\Coin(\alpha\tilde{f}_i,\tilde{g})=\varnothing.
\]
It follows that \begin{align*}
\Coin(\hat{f}_j,\hat{g})\cap \hat{p}^{-1}p\Coin(\alpha\tilde{f}_i,\tilde{g})\subseteq \hat{p}^{-1}(\hat{p}\Coin(\hat{f}_j,\hat{g})\cap p\Coin(\alpha\tilde{f}_i,\tilde{g}))=\varnothing.
\end{align*}

Now suppose $i\sim j$; say $i=\sigma_\gamma(j)$. Then $(\alpha,i)\sim_{\varphi,\psi} (\alpha^\gamma,j)$, so \[
p\Coin(\alpha\tilde{f}_i,\tilde{g})=p\Coin(\alpha^\gamma\tilde{f}_j,\tilde{g}).
\]
Since \[
\Coin(\hat{f}_j,\hat{g})=\bigsqcup_{[\beta]\in \R[\varphi'_j,\psi']}\check{p}\Coin(\beta\tilde{f}_j,\tilde{g}),
\]
it suffices to show that \[
\check{p}\Coin(\beta\tilde{f}_j,\tilde{g})\cap \hat{p}^{-1}p\Coin(\alpha^\gamma\tilde{f}_j,\tilde{g})=\varnothing
\]
if $\beta\not\sim_{\varphi_j,\psi}\alpha^\gamma$, and \[
\check{p}\Coin(\beta\tilde{f}_j,\tilde{g})\subseteq \hat{p}^{-1}p\Coin(\alpha^\gamma\tilde{f}_j,\tilde{g})
\]
if $\beta\sim_{\varphi_j,\psi}\alpha^\gamma$.

For the first statement, we have \[
p\Coin(\beta\tilde{f}_j,\tilde{g})\cap p\Coin(\alpha^\gamma\tilde{f}_j,\tilde{g})=\varnothing
\]
if $\beta\not\sim_{\varphi_j,\psi}\alpha^\gamma$, so certainly \begin{align*}
\check{p}\Coin(\beta\tilde{f}_j,\tilde{g})\cap \hat{p}^{-1}p\Coin(\alpha^\gamma\tilde{f}_j,\tilde{g})\subseteq \hat{p}^{-1}(p\Coin(\beta\tilde{f}_j,\tilde{g})\cap p\Coin(\alpha^\gamma\tilde{f}_j,\tilde{g}))=\varnothing.
\end{align*}

For the second statement, suppose $\beta\sim_{\varphi_j,\psi}\alpha^\gamma$. If $\hat{x}\in \check{p}\Coin(\beta\tilde{f}_j,\tilde{g})$, take $\tilde{x}\in \Coin(\beta\tilde{f}_j,\tilde{g})$ with $\check{p}(\tilde{x})=\hat{x}$. Then \[
\hat{p}(\hat{x})=p(\tilde{x})\in p\Coin(\beta\tilde{f}_j,\tilde{g})=p\Coin(\alpha^\gamma\tilde{f}_j,\tilde{g}),
\]
so $\hat{x}\in \hat{p}^{-1}p\Coin(\alpha^\gamma\tilde{f}_j,\tilde{g})$.
\end{proof}

In particular, we can write the index of a coincidence class of $(f,g)$ in terms of indices of coincidence classes of $(\hat{f}_i,\hat{g})$: for every $[(\alpha,i)]\in \R[\varphi,\psi]$, we have
\begin{align}\label{eq:ind-cc}
\ind(f,g,p\Coin(\alpha\tilde{f}_i,\tilde{g}))&=\ind(f,g,U) \notag \\ 
&=\frac{1}{[\Pi:S]}\sum_{j=1}^{n}\ind_1(\hat{f}_j,\hat{g},\hat{p}^{-1}(U)).
\end{align}
where $U\subseteq X$ is an open such that $p\Coin(\alpha\tilde{f}_i,\tilde{g})=\Coin(f,g)\cap U$.
Since \[
\Coin(\hat{f}_i,\hat{g})\cap\hat{p}^{-1}(U)=\Coin(\hat{f}_i,\hat{g})\cap \hat{p}^{-1}p\Coin(\alpha\tilde{f}_i,\tilde{g}),
\]
it follows from excision and Lemma \ref{lem:coin-ij} that \eqref{eq:ind-cc} can be rewritten as \[
\frac{1}{[\Pi:S]}\sum_{j\sim i}\sum_{\substack{[\beta]\in \R[\varphi'_j,\psi'] \\ \beta\sim_{\varphi_j,\psi}\,\alpha^\gamma \\ \text{(with $i=\sigma_\gamma(j)$)}}}\ind_1(\hat{f}_j,\hat{g},\check{p}\Coin(\beta\tilde{f}_j,\tilde{g})).
\]

\subsection{Averaging formula for the Reidemeister number} From the results of the previous section, it follows that \[
R(\hat{f}_i,\hat{g})=R(\varphi'_i,\psi')
\]
for all $i$, and \[
R(f,g)=R(\varphi,\psi)=\sum_{[i]_\sigma} R(\varphi_i,\psi).
\]
Since $R(\varphi_i,\psi)=R(\varphi_j,\psi)$ if $i\sim j$, we can also write \[
R(f,g)=\sum_{i=1}^{n}\frac{1}{[\Pi:S_i]} \,R(\varphi_i,\psi)
\]
where $[\Pi:S_i]$ is the number of elements of $[i]_\sigma$, by the orbit-stabiliser theorem.

\begin{lem}\label{lem:R-lift}
Let $r:\R[\varphi'_i,\psi']\to \R[\varphi_i,\psi]$ denote the natural projection. For all $\alpha\in \Delta$, with corresponding Reidemeister class $[\alpha]_{\varphi_i,\psi}\in \R[\varphi_i,\psi]$, \[
|r^{-1}([\alpha]_{\varphi_i,\psi})|=\frac{[S_i:S]}{|u_i(\coin(\tau_\alpha\varphi_i,\psi))|}
\]
where $u_i:S_i\to S_i/S$ is the projection, and  \[
\coin(\tau_\alpha\varphi_i,\psi)=\{\gamma\in S_i \mid \alpha\varphi_i(\gamma)\alpha^{-1}=\psi(\gamma) \}
\]
is the coincidence set of the morphisms $\tau_\alpha\varphi_i,\psi:S_i\to \Delta$.
\end{lem}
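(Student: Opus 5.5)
The plan is to realise the fibre $r^{-1}([\alpha]_{\varphi_i,\psi})$ as an orbit of the finite group $S_i/S$ and then apply the orbit-stabiliser theorem. First I will check that $S_i$ acts on $\Delta$ by
\[
\gamma\cdot\beta=\psi(\gamma)\beta\varphi_i(\gamma^{-1}),\qquad \gamma\in S_i,\ \beta\in\Delta .
\]
This uses that $\varphi_i$ and $\psi$ restrict to morphisms on $S_i$, which the paper noted earlier for $\varphi_i$. By definition, the orbits of the full $S_i$-action are the classes in $\R[\varphi_i,\psi]$. The orbits of the restricted $S$-action are the classes in $\R[\varphi'_i,\psi']$.

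Since $S$ is normal in $\Pi$, it is normal in $S_i$. Hence $S_i$ permutes the $S$-orbits: if $\beta'=s\cdot\beta$ with $s\in S$, then $\gamma\cdot\beta'=(\gamma s\gamma^{-1})\cdot(\gamma\cdot\beta)$, and $\gamma s\gamma^{-1}\in S$. This gives a well-defined action of the finite group $S_i/S$ on $\R[\varphi'_i,\psi']$. This action is transitive on each fibre of $r$, because two elements of $\Delta$ lie in the same $S_i$-orbit exactly when their $S$-classes are related by some $\gamma\in S_i$. Therefore
\[
|r^{-1}([\alpha]_{\varphi_i,\psi})|=\frac{[S_i:S]}{|\mathrm{Stab}_{S_i/S}([\alpha]_{\varphi'_i,\psi'})|}.
\]

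It remains to identify the stabiliser with $u_i(\coin(\tau_\alpha\varphi_i,\psi))$. I expect this to be the main step. The coset $\gamma S$ fixes $[\alpha]_{\varphi'_i,\psi'}$ if and only if there is $s\in S$ with $\gamma\cdot\alpha=s\cdot\alpha$. That condition holds if and only if $(s^{-1}\gamma)\cdot\alpha=\alpha$, where $s^{-1}\gamma$ lies in the same coset as $\gamma$ (normality again). So the coset $\gamma S$ fixes the class if and only if it contains an element $\gamma'$ with $\gamma'\cdot\alpha=\alpha$. This last equation reads $\psi(\gamma')\alpha\varphi_i(\gamma')^{-1}=\alpha$, that is $\alpha\varphi_i(\gamma')\alpha^{-1}=\psi(\gamma')$, which says exactly $\gamma'\in\coin(\tau_\alpha\varphi_i,\psi)$.

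Hence the stabiliser is $u_i(\coin(\tau_\alpha\varphi_i,\psi))$, which is a subgroup of $S_i/S$. The only care needed is the inversion convention $\varphi_i(\gamma^{-1})=\varphi_i(\gamma)^{-1}$ on $S_i$, together with matching the action to the relation $\sim_{\varphi_i,\psi}$ as written in the paper.
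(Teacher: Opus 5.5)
Your proposal is correct and follows the same route as the paper. The paper also lets $S_i/S$ act on the fibre by $\bar\gamma\cdot[\beta]_{\varphi'_i,\psi'}=[\psi(\gamma)\beta\varphi_i(\gamma^{-1})]_{\varphi'_i,\psi'}$, states that this action is well defined and transitive with stabiliser $u_i(\coin(\tau_\alpha\varphi_i,\psi))$, and concludes by the orbit--stabiliser theorem. You additionally supply the checks the paper calls ``easily verified'': that $\varphi_i$ is a homomorphism on $S_i$, that $S$ is normal in $S_i$, and the coset argument identifying the stabiliser.
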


\begin{proof}	
For $\alpha\in \Delta$, consider the action of $S_i/S$ on $r^{-1}([\alpha]_{\varphi_i,\psi})$ given by \[
\bar{\gamma}\cdot [\beta]_{\varphi'_i,\psi'} = [\psi(\gamma)\beta\varphi_i(\gamma^{-1})]_{\varphi'_i,\psi'}
\]
(where $[\,\cdot\,]_{\varphi'_i,\psi'}$ denote the classes in $\R[\varphi'_i,\psi']$).
It is easily verified that this action is well-defined and transitive, and that the stabilizer of $[\alpha]_{\varphi'_i,\psi'}$ is $u_i(\coin(\tau_\alpha\varphi_i,\psi))$. The result follows from the orbit-stabilizer theorem.
\end{proof}

We can now show:

\begin{thm}
Let $f:X\to D_n(Y)$ be an $n$-valued map and $g:X\to Y$ a map.
Let $\hat{X}$ be the splitting space of $f$, with covering map $\hat{p}:\hat{X}\to X$, and write $f\circ \hat{p}=\{\hat{f}_1,\ldots,\hat{f}_n\}$ and $g\circ \hat{p}=\hat{g}$. Then
\[
R(f,g)\geq\frac{1}{[\Pi:S]}\sum_{i=1}^n R(\hat{f}_i,\hat{g})
\]
where the equality holds if and only if either $R(\hat{f}_i,\hat{g})=\infty$ for some $i$, or $\coin(\tau_\alpha\varphi_i,\psi)\subseteq S$ for all $\alpha$ and $i$.
\end{thm}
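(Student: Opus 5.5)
We will prove the inequality by counting classes fibrewise over the projections $r_i:\R[\varphi'_i,\psi']\to \R[\varphi_i,\psi]$ of Lemma \ref{lem:R-lift}. By the results of subsection \ref{subsec:comparing}, $R(\hat{f}_i,\hat{g})=R(\varphi'_i,\psi')=|\R[\varphi'_i,\psi']|$. Since $r_i$ is surjective, this equals the sum over $[\alpha]\in\R[\varphi_i,\psi]$ of $|r_i^{-1}([\alpha])|$.

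By Lemma \ref{lem:R-lift}, each fibre has size $[S_i:S]/|u_i(\coin(\tau_\alpha\varphi_i,\psi))|\leq [S_i:S]$. Equality holds exactly when $u_i(\coin(\tau_\alpha\varphi_i,\psi))$ is trivial, i.e.\ when $\coin(\tau_\alpha\varphi_i,\psi)\subseteq S$. Note that this condition depends only on the class $[\alpha]$, since the stabiliser sizes are constant on an orbit. Hence
\[
R(\hat{f}_i,\hat{g})\leq [S_i:S]\,R(\varphi_i,\psi),
\]
with equality (when finite) if and only if $\coin(\tau_\alpha\varphi_i,\psi)\subseteq S$ for all $\alpha\in\Delta$. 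Dividing by $[\Pi:S]=[\Pi:S_i][S_i:S]$ and summing over $i$ gives
\[
\frac{1}{[\Pi:S]}\sum_{i=1}^n R(\hat{f}_i,\hat{g})\leq \sum_{i=1}^n\frac{1}{[\Pi:S_i]}R(\varphi_i,\psi)=R(f,g),
\]
using the formula for $R(f,g)$ derived just before Lemma \ref{lem:R-lift}.

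For the equality statement, first treat infinite cases. If $R(\hat{f}_i,\hat{g})=\infty$ for some $i$, then the fibres of $r_i$ are finite (of size at most $[S_i:S]$), so $R(\varphi_i,\psi)=\infty$ as well, hence $R(f,g)=\infty$ and both sides are infinite. Conversely, if $R(f,g)=\infty$, then some $R(\varphi_i,\psi)=\infty$, and by surjectivity of $r_i$ also $R(\hat{f}_i,\hat{g})=\infty$. So we may assume everything is finite. Then the summed inequality is an equality if and only if each termwise inequality is, i.e.\ if and only if every fibre attains $[S_i:S]$, which is the condition $\coin(\tau_\alpha\varphi_i,\psi)\subseteq S$ for all $\alpha$ and $i$.

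The main obstacle is the bookkeeping around Lemma \ref{lem:R-lift}. One has to check that its action is well defined, using that $S$ is normal in $S_i$ and that $\varphi_i,\psi$ restricted to $S_i$ are morphisms. One also has to make sure the stabiliser condition is taken over all representatives $\alpha$ and not just one per class. This is harmless because conjugating $\alpha$ within its class conjugates $\coin(\tau_\alpha\varphi_i,\psi)$ by an element of $S_i$, and $S$ is normal in $S_i$, so containment in $S$ is preserved.
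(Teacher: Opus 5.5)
Your proof is correct and follows essentially the same route as the paper. Both combine $R(f,g)=\sum_{i}\frac{1}{[\Pi:S_i]}R(\varphi_i,\psi)$ with the fibre count of Lemma \ref{lem:R-lift}. The paper writes each $R(\varphi_i,\psi)$ as a sum over $\R[\varphi'_i,\psi']$ of weights $|u_i(\coin(\tau_\alpha\varphi_i,\psi))|/[S_i:S]$ and bounds $|u_i(\cdot)|\geq 1$; you instead sum fibre sizes over $\R[\varphi_i,\psi]$ and bound each by $[S_i:S]$, which is the same estimate read in the other direction.
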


\begin{proof}
By the above, we get \begin{align*}
R(f,g)&=\sum_{i=1}^n \frac{1}{[\Pi:S_i]}R(\varphi_i,\psi) \\
&=\sum_{i=1}^n \frac{1}{[\Pi:S_i]}\sum_{[\alpha]\in \R[\varphi'_i,\psi']} \frac{|u_i(\coin(\tau_\alpha\varphi_i,\psi))|}{[S_i:S]} \\
&=\frac{1}{[\Pi:S]}\sum_{i=1}^{n}\sum_{[\alpha]\in \R[\varphi'_i,\psi']}|u_i(\coin(\tau_\alpha\varphi_i,\psi))|.
\end{align*}
Since $1\in \coin(\tau_\alpha\varphi_i,\psi)$, we have $|u_i(\coin(\tau_\alpha\varphi_i,\psi))|\geq 1$ for all $\alpha$ and $i$.
If $R(\hat{f}_i,\hat{g})=\infty$, then \[
\sum_{[\alpha]\in \R[\varphi'_i,\psi']}|u_i(\coin(\tau_\alpha\varphi_i,\psi))|=\infty =R(\hat{f}_i,\hat{g}).
\]
If $R(\hat{f}_i,\hat{g})<\infty$, then \[
\sum_{[\alpha]\in \R[\varphi'_i,\psi']}|u_i(\coin(\tau_\alpha\varphi_i,\psi))|\geq \sum_{[\alpha]\in \R[\varphi'_i,\psi']} 1 =R(\hat{f}_i,\hat{g}),
\]
with an equality if and only if $\coin(\tau_\alpha\varphi_i,\psi)\subseteq S$ for all $\alpha$.
\end{proof}

\subsection{Averaging formula for the Nielsen number}

To compare essentiality of coincidence classes of $(f,g)$ and $(\hat{f}_i,\hat{g})$, we need a more explicit relation between these indices than the one we derived at the end of section \ref{subsec:comparing}. To derive this explicit relationship (Lemma \ref{lem:ind-lift}), we use the equivalent definition of the index from Remark \ref{rmk:ind-equiv}, for which we must first deform $g$ to a map $g'$ such that $\Coin(f,g')$ is finite. Since $N(f,g)=N(f,g')$, we will for simplicity assume that $g'=g$, i.e.\ that $\Coin(f,g)$ is finite, for the remainder of this section.

\begin{lem}
For all $x\in p\Coin(\alpha\tilde{f}_i,\tilde{g})$, we have \[
|\hat{p}^{-1}(x)\cap \check{p}\Coin(\alpha\tilde{f}_i,\tilde{g})|=|u_i(\coin(\tau_\alpha\varphi_i,\psi))|.
\]
\end{lem}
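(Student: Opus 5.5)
Fix $x\in p\Coin(\alpha\tilde{f}_i,\tilde{g})$ and a point $\tilde{x}\in\Coin(\alpha\tilde{f}_i,\tilde{g})$ with $p(\tilde{x})=x$. The fibre $\hat{p}^{-1}(x)$ is $\{\check{p}(\gamma\tilde{x})\mid\gamma\in\Pi\}$, and $\check{p}(\gamma\tilde{x})=\check{p}(\gamma'\tilde{x})$ if and only if $S\gamma=S\gamma'$. Since $S$ is normal, this identifies $\hat{p}^{-1}(x)$ with $\Pi/S$ via $\bar{\gamma}\mapsto\check{p}(\gamma\tilde{x})$. The plan is to determine exactly which cosets $\bar{\gamma}$ give points of $\check{p}\Coin(\alpha\tilde{f}_i,\tilde{g})$, and to show that these form the set $u_i(\coin(\tau_\alpha\varphi_i,\psi))\subseteq S_i/S$.

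\textbf{Step 1: a pointwise criterion.} We have $\check{p}(\gamma\tilde{x})\in\check{p}\Coin(\alpha\tilde{f}_i,\tilde{g})$ if and only if $s\gamma\tilde{x}\in\Coin(\alpha\tilde{f}_i,\tilde{g})$ for some $s\in S$. Replacing $\gamma$ by $s\gamma$, this says that some representative $\gamma$ of the coset satisfies
\[
\alpha\tilde{f}_i(\gamma\tilde{x})=\tilde{g}(\gamma\tilde{x}).
\]
Expanding with $\varphi$ and $\psi$ turns this into
\[
\alpha\varphi_i(\gamma)\tilde{f}_{\sigma_\gamma^{-1}(i)}(\tilde{x})=\psi(\gamma)\tilde{g}(\tilde{x})=\psi(\gamma)\alpha\tilde{f}_i(\tilde{x}).
\]

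\textbf{Step 2: reading off the conditions.} Apply $q$ to both sides. The points $q\tilde{f}_k(\tilde{x})$ for different $k$ are distinct, so we get $\sigma_\gamma^{-1}(i)=i$, that is, $\gamma\in S_i$. The equation then reads $\alpha\varphi_i(\gamma)\tilde{f}_i(\tilde{x})=\psi(\gamma)\alpha\tilde{f}_i(\tilde{x})$. The action of $\Delta$ on $\tilde{Y}$ is free, so this forces $\alpha\varphi_i(\gamma)\alpha^{-1}=\psi(\gamma)$, i.e.\ $\gamma\in\coin(\tau_\alpha\varphi_i,\psi)$. The argument reverses, so
\[
\gamma\tilde{x}\in\Coin(\alpha\tilde{f}_i,\tilde{g}) \iff \gamma\in\coin(\tau_\alpha\varphi_i,\psi).
\]

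\textbf{Step 3: passing to cosets.} By Step 1, $\check{p}(\gamma\tilde{x})$ lies in $\check{p}\Coin(\alpha\tilde{f}_i,\tilde{g})$ if and only if the coset $S\gamma$ meets $\coin(\tau_\alpha\varphi_i,\psi)$. Every such coset lies in $S_i$, because $S\subseteq S_i$. The set of these cosets is therefore exactly the image $u_i(\coin(\tau_\alpha\varphi_i,\psi))\subseteq S_i/S$. Since $\bar{\gamma}\mapsto\check{p}(\gamma\tilde{x})$ is a bijection onto $\hat{p}^{-1}(x)$, counting gives the claimed cardinality.

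The main point of care is the coset bookkeeping. The coincidence set $\coin(\tau_\alpha\varphi_i,\psi)$ is in general not a union of $S$-cosets: the condition $\psi(s\gamma)=\alpha\varphi_i(s\gamma)\alpha^{-1}$ need not hold for all $s\in S$. This is why the count is the size of the image under $u_i$, not a ratio of indices. The identification of the fibre with $\Pi/S$, together with the independence of the choice of $\tilde{x}$, is routine. Finiteness of $\Coin(f,g)$ is not needed for this lemma.
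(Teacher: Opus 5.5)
Your proof is correct and follows the paper's argument. You expand $\alpha\tilde f_i(\gamma\tilde x)=\tilde g(\gamma\tilde x)$ via $\varphi,\psi$ to get $\gamma\in S_i$ and $\alpha\varphi_i(\gamma)\alpha^{-1}=\psi(\gamma)$, then count the points $\check p(\gamma\tilde x)$ by their cosets modulo $S$; your explicit identification of the fibre with $\Pi/S$ just makes precise the freeness of the deck action that the paper uses implicitly when it counts the set $\{u_i(\gamma)\hat x\mid \gamma\in\coin(\tau_\alpha\varphi_i,\psi)\}$.
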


\begin{proof}
Fix $\hat{x}\in \hat{p}^{-1}(x)\cap \check{p}\Coin(\alpha\tilde{f}_i,\tilde{g})$. We will express all other elements of $\hat{p}^{-1}(x)\cap \check{p}\Coin(\alpha\tilde{f}_i,\tilde{g})$ in terms of $\hat{x}$.

Write $\hat{x}=\check{p}(\tilde{x})$ with $\tilde{x}\in \Coin(\alpha\tilde{f}_i,\tilde{g})$. For any other element $\hat{x}'\in \hat{p}^{-1}(x)\cap \check{p}\Coin(\alpha\tilde{f}_i,\tilde{g})$, write $\hat{x}'=\check{p}(\tilde{x}')$ with $\tilde{x}'\in \Coin(\alpha\tilde{f}_i,\tilde{g})$. Since \[
p(\tilde{x})=\hat{p}(\hat{x})=x=\hat{p}(\hat{x}')=p(\tilde{x}'),
\]
there is a $\gamma\in \Pi$ so that $\tilde{x}'=\gamma\tilde{x}$. Since $\tilde{x}'\in \Coin(\alpha\tilde{f}_i,\tilde{g})$, we have \[
\alpha\tilde{f}_i(\gamma\tilde{x})=\tilde{g}(\gamma\tilde{x}) \iff \alpha\varphi_i(\gamma)\tilde{f}_{\sigma_\gamma^{-1}(i)}(\tilde{x})=\psi(\gamma)\tilde{g}(\tilde{x}).
\]
Using that $\tilde{x}\in \Coin(\alpha\tilde{f}_i,\tilde{g})$, we get \begin{equation}\label{eq:lem:N}
\alpha\varphi_i(\gamma)\tilde{f}_{\sigma_\gamma^{-1}(i)}(\tilde{x})=\psi(\gamma)\alpha\tilde{f}_i(\tilde{x}).
\end{equation}
It follows that $\sigma_\gamma^{-1}(i)=i$ and $\alpha\varphi_i(\gamma)=\psi(\gamma)\alpha$. That is, $\gamma\in S_i$ and $\alpha\varphi_i(\gamma)\alpha^{-1}=\psi(\gamma)$, so $\gamma\in \coin(\tau_\alpha\varphi_i,\psi)$.

We have $\hat{x}'=\check{p}(\gamma\tilde{x})=u_i(\gamma)\check{p}(\tilde{x})=u_i(\gamma)\hat{x}$. This shows that \[
\hat{p}^{-1}(x)\cap \check{p}\Coin(\alpha\tilde{f}_i,\tilde{g})\subseteq\{u_i(\gamma)\hat{x} \mid \gamma\in \coin(\tau_\alpha\varphi_i,\psi) \}.
\]
Note that the converse also holds: if $\gamma\in \coin(\tau_\alpha\varphi_i,\psi)$, it follows from \eqref{eq:lem:N} that $\gamma\tilde{x}\in \Coin(\alpha\tilde{f}_i,\tilde{g})$, so $u_i(\gamma)\hat{x}=\check{p}(\gamma\tilde{x})\in \hat{p}^{-1}(x)\cap \check{p}\Coin(\alpha\tilde{f}_i,\tilde{g})$. Hence we have \[
\hat{p}^{-1}(x)\cap \check{p}\Coin(\alpha\tilde{f}_i,\tilde{g})=\{u_i(\gamma)\hat{x} \mid \gamma\in \coin(\tau_\alpha\varphi_i,\psi) \}.
\]
The number of elements in this set is $|u_i(\coin(\tau_\alpha\varphi_i,\psi))|$.
\end{proof}

In other words, \[
\hat{p}:\check{p}\Coin(\alpha\tilde{f}_i,\tilde{g}) \to p\Coin(\alpha\tilde{f}_i,\tilde{g})
\]
is a $|u_i(\coin(\tau_\alpha\varphi_i,\psi))|$-fold covering map.

\begin{lem}\label{lem:ind-lift}
For all $\alpha\in \Delta$ and $i\in\{1,\ldots,n\}$, \[
\ind_1(\hat{f}_i,\hat{g},\check{p}\Coin(\alpha\tilde{f}_i,\tilde{g}))=|u_i(\coin(\tau_\alpha\varphi_i,\psi))|\ind(f,g,p\Coin(\alpha\tilde{f}_i,\tilde{g})).
\]
\end{lem}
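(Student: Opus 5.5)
We will use the standing assumption that $\Coin(f,g)$ is finite, so the coincidence class $C=p\Coin(\alpha\tilde{f}_i,\tilde{g})$ is a finite set $\{x_1,\ldots,x_k\}$. Put $\hat{C}=\check{p}\Coin(\alpha\tilde{f}_i,\tilde{g})$. By the previous lemma, each fibre $\hat{p}^{-1}(x_j)\cap\hat{C}$ has exactly $m:=|u_i(\coin(\tau_\alpha\varphi_i,\psi))|$ points. By additivity of both indices (Theorem \ref{thm:index}(ii) and its single-valued analogue), the claim reduces to a pointwise statement:
\[
\sum_{\hat{x}\in \hat{p}^{-1}(x_j)\cap\hat{C}}\ind_1(\hat{f}_i,\hat{g},\hat{x}) = m\,\ind(f,g,x_j)\quad\text{for each } j.
\]
So it suffices to show that $\ind_1(\hat{f}_i,\hat{g},\hat{x})=\ind(f,g,x_j)$ for every $\hat{x}\in\hat{p}^{-1}(x_j)\cap\hat{C}$.

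For this we fix $j$ and choose a small connected (say simply connected) open neighbourhood $U_j$ of $x_j$ with $\Coin(f,g,U_j)=\{x_j\}$. We also require that $f$ splits on $U_j$ as $\{f_1,\ldots,f_n\}$ and that $U_j$ is evenly covered by $\hat{p}$. By Remark \ref{rmk:local-index}, $\ind(f,g,x_j)=\ind_1(f_k,g,x_j)$, where $k$ is the unique index with $f_k(x_j)=g(x_j)$. Now let $\hat{U}$ be the sheet of $\hat{p}^{-1}(U_j)$ containing $\hat{x}$. On $\hat{U}$ we have $\{\hat{f}_1,\ldots,\hat{f}_n\}=\{f_1\circ\hat{p},\ldots,f_n\circ\hat{p}\}$, and since $\hat{U}$ is connected and the values are distinct, each $\hat{f}_l$ agrees on $\hat{U}$ with a single $f_{l'}\circ\hat{p}$. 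Because $\hat{x}\in\hat{C}\subseteq\Coin(\hat{f}_i,\hat{g})$, we get $\hat{f}_i(\hat{x})=\hat{g}(\hat{x})=g(x_j)=f_k(x_j)$, which forces $\hat{f}_i|_{\hat{U}}=f_k\circ\hat{p}|_{\hat{U}}$. Hence $\Coin(\hat{f}_i,\hat{g},\hat{U})=\{\hat{x}\}$.

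Next we use that the single-valued coincidence index is local and invariant under composition with an orientation-preserving homeomorphism $\hat{p}|_{\hat{U}}:\hat{U}\to U_j$, exactly as in the proof of Theorem \ref{thm:index}(iii). This gives $\ind_1(\hat{f}_i,\hat{g},\hat{U})=\ind_1(f_k,g,U_j)=\ind(f,g,x_j)$. Summing over the $m$ points of the fibre lying in $\hat{C}$, and then over $j$, yields the lemma.

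The step needing most care is the additivity decomposition: we must check that the isolating neighbourhoods $\hat{U}$ of the points of $\hat{C}$ can be chosen pairwise disjoint and avoiding the other points of $\Coin(\hat{f}_i,\hat{g})$. This is guaranteed because $\Coin(\hat{f}_i,\hat{g})$ is finite (it is contained in the finite set $\hat{p}^{-1}\Coin(f,g)$) and $\hat{C}$ is a coincidence class of $(\hat{f}_i,\hat{g})$, hence open and closed in $\Coin(\hat{f}_i,\hat{g})$.
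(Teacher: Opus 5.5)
Your proof is correct and follows essentially the same route as the paper. The shared steps are: the finiteness of $\Coin(f,g)$; splitting both class indices into local indices; identifying $\hat{f}_i$ near each lifted point with the factor $f_k\circ\hat{p}$ that meets $g$; invariance of the local index under the orientation-preserving local homeomorphism $\hat{p}$; and the fibre count $|u_i(\coin(\tau_\alpha\varphi_i,\psi))|$ from the preceding lemma. Your sheet-by-sheet matching of $\hat{f}_i$ with $f_k\circ\hat{p}$ is slightly more careful than the paper's single reordering of the factors on all of $\hat{p}^{-1}(U)$. That global reordering need not be consistent across different sheets, since the deck transformations of $\hat{p}$ permute the $\hat{f}_j$.
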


\begin{proof}
As assumed at the beginning of this section, $\Coin(f,g)$ is finite; in particular so is $p\Coin(\alpha\tilde{f}_i,\tilde{g})$. As we saw in Remark \ref{rmk:ind-equiv}, we can write \[
\ind(f,g,p\Coin(\alpha\tilde{f}_i,\tilde{g}))=\sum_{x\in p\Coin(\alpha\tilde{f}_i,\tilde{g})}\ind_1(f^x_{i_x},g,x)
\]
where for all $x\in p\Coin(\alpha\tilde{f}_i,\tilde{g})$, $\{f^x_1,\ldots,f^x_n\}$ is a splitting of $f$ on a neighborhood of $x$, and $i_x\in \{1,\ldots,n\}$ is the unique index such that $f^x_{i_x}(x)=g(x)$.

On this neighborhood of $x$, we have \[
\{\hat{f}_1,\ldots,\hat{f}_n\}=f\circ \hat{p}=\{f^x_1\circ \hat{p},\ldots,f^x_n\circ \hat{p}\},
\]
so we may order the factors $f^x_1,\ldots,f^x_n$ so that $f^x_j\circ \hat{p}=\hat{f}_j$ for all $j$. If we write $x=p(\tilde{x})$ with $\alpha\tilde{f}_i(\tilde{x})=\tilde{g}(\tilde{x})$, then \begin{align*}
q(\alpha\tilde{f}_i(\tilde{x}))=q(\tilde{g}(\tilde{x})) &\iff \hat{f}_i(\check{p}(\tilde{x}))=g(p(\tilde{x})) \\
&\iff f^x_i(\hat{p}(\check{p}(\tilde{x}))) = g(x) \\
&\iff f^x_i(x)=g(x).
\end{align*}
Thus, $i_x=i$ for all $x$.

Since the covering map $\hat{p}$ is an orientation-preserving local homeomorphism, and $\ind_1$ is a local invariant, we have \[
\ind_1(f^x_i,g,x)=\ind_1(f^x_i\circ \hat{p},g\circ \hat{p},\hat{x})=\ind_1(\hat{f}_i,\hat{g},\hat{x})
\]
for all $\hat{x}\in \hat{p}^{-1}(x)$. We can now write \begin{align*}
\ind_1(\hat{f}_i,\hat{g},\check{p}\Coin(\alpha\tilde{f}_i,\tilde{g})) &= \sum_{\hat{x}\in \check{p}\Coin(\alpha\tilde{f}_i,\tilde{g})} \ind_1(\hat{f}_i,\hat{g},\hat{x}) \\
&= \sum_{\hat{x}\in \check{p}\Coin(\alpha\tilde{f}_i,\tilde{g})} \ind_1(f^x_i,g,\hat{p}(\hat{x})) \\
&= \sum_{x\in p\Coin(\alpha\tilde{f}_i,\tilde{g})}\sum_{\hat{x}\in \hat{p}^{-1}(x)\cap \check{p}\Coin(\alpha\tilde{f}_i,\tilde{g})} \ind_1(f^x_i,g,x) \\
&= \sum_{x\in p\Coin(\alpha\tilde{f}_i,\tilde{g})} |u_i(\coin(\tau_\alpha\varphi_i,\psi))|\, \ind_1(f^x_i,g,x) \\
&= |u_i(\coin(\tau_\alpha\varphi_i,\psi))|\,\ind(f,g,p\Coin(\alpha\tilde{f}_i,\tilde{g})). \qedhere
\end{align*}
\end{proof}

\begin{thm}
Let $f:X\to D_n(Y)$ be an $n$-valued map and $g:X\to Y$ a map.
Let $\hat{X}$ be the splitting space of $f$, with covering map $\hat{p}:\hat{X}\to X$, and write $f\circ \hat{p}=\{\hat{f}_1,\ldots,\hat{f}_n\}$ and $g\circ \hat{p}=\hat{g}$. Then
\[
N(f,g)\geq\frac{1}{[\Pi:S]}\sum_{i=1}^n N(\hat{f}_i,\hat{g})
\]
where the equality holds if and only if $\coin(\tau_\alpha\varphi_i,\psi)\subseteq S$ for all $\alpha$ and $i$ for which $\check{p}\Coin(\alpha\tilde{f}_i,\tilde{g})$ is an essential coincidence class of $(\hat{f}_i,\hat{g})$.
\end{thm}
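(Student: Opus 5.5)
We follow the pattern of the averaging formula for the Reidemeister number, with Lemma \ref{lem:ind-lift} supplying the link between indices. As at the start of this subsection, we may assume $\Coin(f,g)$ is finite, since $N(f,g)$ is homotopy invariant. The splitting space, and hence each $N(\hat{f}_i,\hat{g})$, is also homotopy invariant, because a homotopy of $f$ and $g$ lifts to homotopies of the pairs $(\hat{f}_i,\hat{g})$.

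\textbf{Step 1: essentiality upstairs and downstairs agree.} Lemma \ref{lem:ind-lift} states that the index of $\check{p}\Coin(\alpha\tilde{f}_i,\tilde{g})$ as a coincidence class of $(\hat{f}_i,\hat{g})$ equals $|u_i(\coin(\tau_\alpha\varphi_i,\psi))|$ times the index of $p\Coin(\alpha\tilde{f}_i,\tilde{g})$ as a class of $(f,g)$. The factor is a positive integer, so the class $[\alpha]\in\R[\varphi'_i,\psi']$ is essential for $(\hat{f}_i,\hat{g})$ exactly when $[(\alpha,i)]\in\R[\varphi,\psi]$ is essential for $(f,g)$. In particular, essentiality is constant on the fibres of the projection $r:\R[\varphi'_i,\psi']\to\R[\varphi_i,\psi]$.

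\textbf{Step 2: counting.} Essentiality of $[(\alpha,i)]$ does not depend on the representative. Moreover, the correspondence $\alpha\mapsto\alpha^\gamma$ between $\R[\varphi_i,\psi]$ and $\R[\varphi_j,\psi]$ for $i=\sigma_\gamma(j)$ sends a class to the same coincidence class of $(f,g)$. Let $N_i$ denote the number of essential classes in $\R[\varphi_i,\psi]$. Then $N_i=N_j$ for $i\sim j$, and
\[
N(f,g)=\sum_{[i]_\sigma}N_i=\sum_{i=1}^n\frac{1}{[\Pi:S_i]}N_i .
\]
By Step 1, $N(\hat{f}_i,\hat{g})$ is the sum of $|r^{-1}(c)|$ over the essential classes $c\in\R[\varphi_i,\psi]$. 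By Lemma \ref{lem:R-lift} this equals
\[
\sum_{c\text{ essential}}\frac{[S_i:S]}{|u_i(\coin(\tau_{\alpha_c}\varphi_i,\psi))|}.
\]
Equivalently, each essential $[\alpha]\in\R[\varphi'_i,\psi']$ contributes $|u_i(\coin(\tau_\alpha\varphi_i,\psi))|/[S_i:S]$ to $N_i$. We will check that the stabiliser size in Lemma \ref{lem:R-lift} is independent of the representative $\alpha$ within a fibre; it is a stabiliser of a transitive action, so conjugate stabilisers have equal size. Combining these gives
\[
N(f,g)=\frac{1}{[\Pi:S]}\sum_{i=1}^n\ \sum_{\substack{[\alpha]\in\R[\varphi'_i,\psi']\\ \text{essential}}}|u_i(\coin(\tau_\alpha\varphi_i,\psi))|.
\]

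\textbf{Step 3: inequality and equality case.} Since $1\in\coin(\tau_\alpha\varphi_i,\psi)$, each term is at least $1$. This yields the inequality, with $N(\hat{f}_i,\hat{g})$ appearing as the count of essential classes. Both sides are finite: by Corollary \ref{cor:N-finite+leq} applied to each pair, there are finitely many essential classes. So equality holds iff every term over the essential classes equals $1$, that is, iff $u_i(\coin(\tau_\alpha\varphi_i,\psi))$ is trivial, i.e.\ $\coin(\tau_\alpha\varphi_i,\psi)\subseteq S$, for each essential $\check{p}\Coin(\alpha\tilde{f}_i,\tilde{g})$.

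The main obstacle is Step 1's bookkeeping. We must make sure that the class $\check{p}\Coin(\alpha\tilde{f}_i,\tilde{g})$ of $(\hat{f}_i,\hat{g})$ and its index are compared with the right class of $(f,g)$, namely $[(\alpha,i)]$ and not some $[(\beta,j)]$. Lemma \ref{lem:ind-lift} handles exactly this, so the remaining work is the orbit–stabiliser rearrangement already carried out for the Reidemeister number.
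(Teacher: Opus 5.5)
Your proposal is correct and follows essentially the same route as the paper. Both arguments reduce to finite $\Coin(f,g)$ and use Lemma~\ref{lem:ind-lift} to match essentiality of $[\alpha]\in\R[\varphi'_i,\psi']$ with that of $[(\alpha,i)]\in\R[\varphi,\psi]$; they then write $N(f,g)=\sum_i\frac{1}{[\Pi:S_i]}N_i$, spread each essential class over the fibre of $r$ via Lemma~\ref{lem:R-lift} to get $N(f,g)=\frac{1}{[\Pi:S]}\sum_i\sum_{[\alpha]}|u_i(\coin(\tau_\alpha\varphi_i,\psi))|\,\eps_1$, and conclude from $|u_i(\coin(\tau_\alpha\varphi_i,\psi))|\geq 1$, where your note that both sides are finite is what justifies the termwise equality criterion.
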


\begin{proof}
Write \[
\eps(f,g,p\Coin(\alpha\tilde{f}_i,\tilde{g}))=\begin{cases}
1 & \text{if }\ind(f,g,p\Coin(\alpha\tilde{f}_i,\tilde{g}))\neq 0 \\
0 & \text{otherwise}
\end{cases}
\]
and \[
\eps_1(\hat{f}_i,\hat{g},\check{p}\Coin(\alpha\tilde{f}_i,\tilde{g}))=\begin{cases}
1 & \text{if }\ind_1(\hat{f}_i,\hat{g},\check{p}\Coin(\alpha\tilde{f}_i,\tilde{g}))\neq 0 \\
0 & \text{otherwise}.
\end{cases}
\]
Then \[
N(\hat{f}_i,\hat{g})=\sum_{[\alpha]\in \R[\varphi'_i,\psi']} \eps_1(\hat{f}_i,\hat{g},\check{p}\Coin(\alpha\tilde{f}_i,\tilde{g}))
\]
for all $i$, and
\[
N(f,g)=\sum_{[i]_\sigma} \sum_{[\alpha]\in\R[\varphi_i,\psi]} \eps(f,g,p\Coin(\alpha\tilde{f}_i,\tilde{g})).
\]
If $i=\sigma_\gamma(j)$, then \begin{align*}
\sum_{[\beta]\in\R[\varphi_j,\psi]} \eps(f,g,p\Coin(\beta\tilde{f}_j,\tilde{g}))&=\sum_{[\alpha]\in\R[\varphi_i,\psi]} \eps(f,g,p\Coin(\alpha^\gamma\tilde{f}_j,\tilde{g})) \\
&=\sum_{[\alpha]\in\R[\varphi_i,\psi]} \eps(f,g,p\Coin(\alpha\tilde{f}_i,\tilde{g})),
\end{align*}
where the first equality follows from the observation above Lemma \ref{lem:coin-ij}, and the second from Propositions \ref{prop:cc-partition} and \ref{prop:cc-equiv-R}. Hence the summands are independent of the chosen representatives $i$ for the $\sigma$-classes, and we can also write \[
N(f,g)=\sum_{i=1}^n \frac{1}{[\Pi:S_i]} \sum_{[\alpha]\in\R[\varphi_i,\psi]} \eps(f,g,p\Coin(\alpha\tilde{f}_i,\tilde{g})).
\]
By Lemma \ref{lem:ind-lift}, we know \[
\eps(f,g,p\Coin(\alpha\tilde{f}_i,\tilde{g}))=\eps_1(\hat{f}_i,\hat{g},\check{p}\Coin(\alpha\tilde{f}_i,\tilde{g})).
\]
By Lemma \ref{lem:R-lift}, we can rewrite \begin{align*}
N(f,g) &= \sum_{i=1}^n \frac{1}{[\Pi:S_i]} \sum_{[\alpha]\in\R[\varphi_i,\psi]} \eps_1(\hat{f}_i,\hat{g},\check{p}\Coin(\alpha\tilde{f}_i,\tilde{g})) \\
&= \sum_{i=1}^n \frac{1}{[\Pi:S_i]} \sum_{[\alpha]\in\R[\varphi'_i,\psi']} \frac{|u_i(\coin(\tau_\alpha\varphi_i,\psi))|}{[S_i:S]} \, \eps_1(\hat{f}_i,\hat{g},\check{p}\Coin(\alpha\tilde{f}_i,\tilde{g})) \\
&= \frac{1}{[\Pi:S]} \sum_{i=1}^n \sum_{[\alpha]\in\R[\varphi'_i,\psi']}|u_i(\coin(\tau_\alpha\varphi_i,\psi))|\,\eps_1(\hat{f}_i,\hat{g},\check{p}\Coin(\alpha\tilde{f}_i,\tilde{g})) \\
&\geq \frac{1}{[\Pi:S]} \sum_{i=1}^n \sum_{[\alpha]\in\R[\varphi'_i,\psi']}\eps_1(\hat{f}_i,\hat{g},\check{p}\Coin(\alpha\tilde{f}_i,\tilde{g})) \\
&= \frac{1}{[\Pi:S]} \sum_{i=1}^n N(\hat{f}_i,\hat{g})
\end{align*}
with an equality if and only if $|u_i(\coin(\tau_\alpha\varphi_i,\psi))|=1$ for all $\alpha$ and $i$ for which $\eps_1(\hat{f}_i,\hat{g},\check{p}\Coin(\alpha\tilde{f}_i,\tilde{g}))=1$. 
\end{proof}

\section{Formulas on infra-nilmanifolds}\label{sec:inm}

Using the known formulas for coincidences of single-valued maps of infra-nilmanifolds from \cite{HLP2012}, we can obtain explicit formulas for $N(f,g)$, $R(f,g)$ and $L(f,g)$.

\begin{rmk}\label{rmk:notation-inm}
In the theorem below, we use the following notation. Suppose $\Pi_1\orb G_1$ and $\Pi_2\orb G_2$ are orientable infra-nilmanifolds of equal dimension, finitely covered by nilmanifolds $\Gamma_1\orb G_1$ and $\Gamma_2\orb G_2$ respectively, and consider a morphism $\varphi:S\to \Pi_2$, where $S$ is a finite index subgroup of $\Pi_1$. To $\varphi$ we can associate a Lie algebra morphism $\varphi_*:\g_1\to \g_2$ between the Lie algebras of $G_1$ and $G_2$ as follows.

Let $\Gamma'_1$ be a finite index subgroup of $\Gamma_1$ that is contained in $S$, such that $\varphi(\Gamma'_1)\subseteq \Gamma_2$.\footnote{For example, we can take \[
\Gamma'_1=\langle \gamma^{[\Pi_2:\Gamma_2]} \mid \gamma\in S\cap \Gamma_1 \rangle.
\]
The quotient $(S\cap \Gamma_1)/\Gamma'_1$ is finite since it is periodic and finitely generated nilpotent. Since $S\cap \Gamma_1$ is a finite index subgroup of $\Gamma_1$, it follows that $\Gamma'_1$ is a finite index subgroup of $\Gamma_1$ as well. Also, $\varphi(\Gamma'_1)\subseteq \Gamma_2$, since $\varphi(\gamma^{[\Pi_2:\Gamma_2]})=\varphi(\gamma)^{[\Pi_2:\Gamma_2]}\in \Gamma_2$ for all $\gamma\in \Pi_1$.}
Since $\Gamma'_1$ is a finite index subgroup of $\Gamma_1$, the Mal'cev completion of $\Gamma'_1$ is also $G_1$, so the restriction $\varphi:\Gamma'_1\to \Gamma_2$ extends uniquely to a Lie group morphism $G_1\to G_2$, whose Lie algebra morphism we denote by $\varphi_*:\g_1\to \g_2$.

This morphism is independent of the choice of group $\Gamma'_1$: if $\Gamma''_1$ is another group satisfying the requirements, then so is $\Gamma'_1\cap \Gamma''_1$. In particular, the restriction $\varphi:\Gamma'_1\cap \Gamma''_1\to \Gamma_2$ extends uniquely to a Lie group morphism $G_1\to G_2$. By uniqueness, the extensions of $\varphi:\Gamma'_1\to \Gamma_2$ and $\varphi:\Gamma''_1\to \Gamma_2$ must be this same morphism.

Following \cite{HLP2012}, if $\varphi_*:\g_1\to \g_2$ is a morphism between the Lie algebras, then we use \[
\det_{\Gamma_2}^{\Gamma_1}(\varphi_*)
\]
to denote the determinant of the matrix of $\varphi_*$ taken with respect to \emph{preferred bases} for $\g_1$ and $\g_2$ corresponding to $\Gamma_1$ and $\Gamma_2$. Given a lattice $\Gamma$ in a connected simply connected nilpotent Lie group $G$, a preferred basis for the Lie algebra $\g$ consists of the images under the logarithmic map $\log:G\to \g$ of a generating set for $\Gamma$ constructed in a specific way, as explained in \cite[p.~249]{kimlee2005}.\footnote{In \cite{HLP2012}, they say a preferred basis is simply the image of any generating set of $\Gamma$, with a condition on the orientation of the resulting basis, but this is wrong, as the images of an arbitrary generating set could span a smaller space than $\g$.} It is shown in \cite{kimlee2005} that the above determinant is independent of the particular choice of preferred bases.
\end{rmk}

\begin{thm}
Let $\Pi_1\orb G_1$ and $\Pi_2\orb G_2$ be orientable infra-nilmanifolds of equal dimension, finitely covered by nilmanifolds $\Gamma_1\orb G_1$ and $\Gamma_2\orb G_2$ respectively. Let $f:\Pi_1\orb G_1\to D_n(\Pi_2\orb G_2)$ be an $n$-valued map and $g:\Pi_1\orb G_1\to \Pi_2\orb G_2$ a map. Let $\varphi_i:S_i\subseteq \Pi_1\to \Pi_2$ be the morphisms induced by some lift of $f$, and $\psi:\Pi_1\to \Pi_2$ the morphism induced by some lift of $g$.
Then \begin{align*}
L(f,g)&=\frac{1}{[\Pi_1:\Gamma_1]} \sum_{i=1}^n\sum_{\bar{\alpha}\in \Pi_2/\Gamma_2} \det_{\Gamma_2}^{\Gamma_1}(\psi_*-(\tau_\alpha\varphi_i)_*) \\
R(f,g)&=\frac{1}{[\Pi_1:\Gamma_1]} \sum_{i=1}^n\sum_{\bar{\alpha}\in \Pi_2/\Gamma_2} |\!\det_{\Gamma_2}^{\Gamma_1}(\psi_*-(\tau_\alpha\varphi_i)_*)|_\infty \\
N(f,g)&=\frac{1}{[\Pi_1:\Gamma_1]} \sum_{i=1}^n\sum_{\bar{\alpha}\in \Pi_2/\Gamma_2} |\!\det_{\Gamma_2}^{\Gamma_1}(\psi_*-(\tau_\alpha\varphi_i)_*)|
\end{align*}
where $|a|_\infty=|a|$ if $a\neq 0$ and $|a|_\infty=\infty$ if $a=0$.
\end{thm}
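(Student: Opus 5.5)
The plan is to reduce everything to the single-valued averaging formulas of \cite{HLP2012}, applied to the pairs $(\hat{f}_i,\hat{g})$ on the splitting space, and then combine them with the averaging formulas of Section \ref{sec:av}. The splitting space $\hat{X}=S\orb G_1$ is again an orientable infra-nilmanifold, being a finite cover of $\Pi_1\orb G_1$, with fundamental group $S$. It is finitely covered by the nilmanifold $(S\cap\Gamma_1)\orb G_1$. The lifts $\tilde{f}_i$ and $\tilde{g}$ induce the morphisms $\varphi'_i=\varphi_i|_S$ and $\psi'=\psi|_S$. The single-valued formula (for $S$ with lattice $S\cap\Gamma_1$) reads
\[
L(\hat{f}_i,\hat{g})=\frac{1}{[S:S\cap\Gamma_1]}\sum_{\bar\alpha\in\Pi_2/\Gamma_2}\det{}_{\Gamma_2}^{S\cap\Gamma_1}\bigl(\psi_*-(\tau_\alpha\varphi_i)_*\bigr),
\]
and analogously for $N$ with absolute values. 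Here the Lie algebra morphisms are computed as in Remark \ref{rmk:notation-inm}, so they are the same whether we regard $\varphi_i$ on $S$ or on $S_i$.

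The first key step is the normalisation of the determinant. Changing the lattice $\Gamma_1$ to the finite index sublattice $S\cap\Gamma_1$ multiplies $\det^{\Gamma_1}_{\Gamma_2}$ by $[\Gamma_1:S\cap\Gamma_1]$, since the preferred bases differ by a unimodular-up-to-index change of covolume. Combined with $[\Pi_1:S][S:S\cap\Gamma_1]=[\Pi_1:\Gamma_1][\Gamma_1:S\cap\Gamma_1]$, this gives
\[
\frac{1}{[\Pi_1:S]}L(\hat{f}_i,\hat{g})=\frac{1}{[\Pi_1:\Gamma_1]}\sum_{\bar\alpha}\det{}^{\Gamma_1}_{\Gamma_2}\bigl(\psi_*-(\tau_\alpha\varphi_i)_*\bigr).
\]
Summing over $i$ and using the averaging formula for the Lefschetz number yields the formula for $L(f,g)$.

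For $N$ and $R$ I have to show that the averaging inequalities of Section \ref{sec:av} are equalities. By the equality criteria there, it suffices to show that $\coin(\tau_\alpha\varphi_i,\psi)\subseteq S$ whenever the relevant class is essential (for $N$), or whenever all $R(\hat f_i,\hat g)$ are finite (for $R$). I expect this to be the main obstacle. The plan is to imitate the single-valued argument of \cite{HLP2012}, in which essential classes correspond to $\det\neq0$ and then the coincidence subgroup of the morphisms is trivial. Concretely, suppose $\gamma\in\coin(\tau_\alpha\varphi_i,\psi)$. Some power $\gamma^k$ lies in $\Gamma'_1$, where both morphisms are restrictions of Lie group morphisms, and $\tau_\alpha\varphi_i(\gamma^k)=\psi(\gamma^k)$. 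Writing $\log\gamma^k=X$, we get $(\psi_*-(\tau_\alpha\varphi_i)_*)X=0$ at the nilpotent level. Nonvanishing of the determinant then forces $X=0$, so $\gamma$ has finite order. A torsion element of the torsion-free group $\Pi_1$ is trivial, which gives $\gamma=1\in S$.

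The link between the determinant and essentiality is supplied by \cite{HLP2012}. The class $\check p\Coin(\alpha\tilde f_i,\tilde g)$ is essential if and only if $\det(\psi_*-(\tau_\alpha\varphi_i)_*)\neq0$, and $R(\hat f_i,\hat g)<\infty$ if and only if all these determinants are nonzero. In both cases, therefore, the equality criterion holds. The $N$ and $R$ formulas then follow by the same normalisation computation as for $L$, using the single-valued formulas $N=\frac1{[S:S\cap\Gamma_1]}\sum|\det|$ and $R=\frac1{[S:S\cap\Gamma_1]}\sum|\det|_\infty$. For $R$, when some determinant vanishes both sides are $\infty$, since $R(f,g)\ge$ the average, which is then infinite.
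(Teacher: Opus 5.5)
Your proposal is correct and follows the paper's proof step for step: you apply the single-valued formulas of \cite{HLP2012} to $(\hat f_i,\hat g)$ on $S\orb G_1$ with lattice $S\cap\Gamma_1$, rescale the determinant by $[\Gamma_1:S\cap\Gamma_1]$, average, and then verify the equality criteria for $R$ and $N$ by the $\log$/torsion-freeness argument showing $\coin(\tau_\alpha\varphi_i,\psi)=1$ whenever the determinant is nonzero. The one step you cite rather than prove is ``essential class of $(\hat f_i,\hat g)$ $\Rightarrow$ $\det\neq 0$'', and only this direction is needed, not the full equivalence you state; the paper proves it by lifting to a nilmanifold cover $\Gamma''_1\orb G_1$, observing that the lifted index is a nonzero multiple of the original, and using $N(\bar\alpha\bar f_i,\bar g)=|\det|$ there.
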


\begin{proof}
In \cite[Theorem 6.11]{HLP2012} it is show that, given two infra-nilmanifolds $\Pi_1\orb G_1$ and $\Pi_2\orb G_2$, finitely covered by nilmanifolds $\Gamma_1\orb G_1$ and $\Gamma_2\orb G_2$, and two single-valued maps $f,g:\Pi_1\orb G_1 \to \Pi_2\orb G_2$ with respective induced morphisms $\varphi,\psi:\Pi_1\to \Pi_2$, \begin{align*}
L(f,g)&=\frac{1}{[\Pi_1:\Gamma_1]} \sum_{\bar{\alpha}\in \Pi_2/\Gamma_2} \det_{\Gamma_2}^{\Gamma_1}(\psi_*-(\tau_\alpha\varphi)_*) \\
R(f,g)&=\frac{1}{[\Pi_1:\Gamma_1]} \sum_{\bar{\alpha}\in \Pi_2/\Gamma_2} |\!\det_{\Gamma_2}^{\Gamma_1}(\psi_*-(\tau_\alpha\varphi)_*)|_\infty \\
N(f,g)&=\frac{1}{[\Pi_1:\Gamma_1]} \sum_{\bar{\alpha}\in \Pi_2/\Gamma_2} |\!\det_{\Gamma_2}^{\Gamma_1}(\psi_*-(\tau_\alpha\varphi)_*)|.
\end{align*}
If $S\orb G_1$ is the splitting space of our $n$-valued map $f$, and $\hat{f}=\{\hat{f}_1,\ldots,\hat{f}_n\}:S\orb G_1\to D_n(\Pi_2\orb G_2)$ and $\hat{g}:S\orb G_1\to \Pi_2\orb G_2$ are the corresponding maps, then we can apply this to the pairs $\hat{f}_i,\hat{g}:S\orb G_1 \to \Pi_2\orb G_2$, whose induced morphisms we know are the restrictions $\varphi'_i:S\to \Pi_2$ and $\psi':S\to \Pi_2$. As a nilmanifold that finitely covers $S\orb G_1$, we can take $\Gamma'_1\orb G_1$ with $\Gamma'_1=\Gamma_1\cap S$. We get: \begin{align*}
L(\hat{f}_i,\hat{g})&=\frac{1}{[S:\Gamma'_1]} \sum_{\bar{\alpha}\in \Pi_2/\Gamma_2} \det_{\Gamma_2}^{\Gamma'_1}(\psi'_*-(\tau_\alpha\varphi'_i)_*) \\
R(\hat{f}_i,\hat{g})&=\frac{1}{[S:\Gamma'_1]} \sum_{\bar{\alpha}\in \Pi_2/\Gamma_2} |\!\det_{\Gamma_2}^{\Gamma'_1}(\psi'_*-(\tau_\alpha\varphi'_i)_*)|_\infty \\
N(\hat{f}_i,\hat{g})&=\frac{1}{[S:\Gamma'_1]} \sum_{\bar{\alpha}\in \Pi_2/\Gamma_2} |\!\det_{\Gamma_2}^{\Gamma'_1}(\psi'_*-(\tau_\alpha\varphi'_i)_*)|.
\end{align*}
Since $\Gamma'_1$ is a finite index subgroup of $\Gamma_1$, it follows from \cite[Corollary 6.6]{HLP2012} that \[
\det_{\Gamma_2}^{\Gamma'_1}(\varphi_*)=[\Gamma_1:\Gamma'_1]\det_{\Gamma_2}^{\Gamma_1}(\varphi_*)
\]
for any Lie algebra morphism $\varphi_*:\g_1\to\g_2$. Also, it follows from the construction in Remark \ref{rmk:notation-inm} that $\psi'_*=\psi_*$ and $(\varphi'_i)_*=(\varphi_i)_*$ for all $i$ (indeed, to compute $\psi_*$, we can use the same subgroup of $S\cap \Gamma_1$ as for $\psi'_*$, which is also a finite index subgroup of $\Pi_1\cap \Gamma_1=\Gamma_1$; for $(\varphi_i)_*$ we can use the same subgroup of $S\cap \Gamma_1$ as for $(\varphi'_i)_*$, which is also a finite index subgroup of $S_i\cap \Gamma_1$).

Combining this with the averaging formulas from the previous sections, we find: \begin{align*}
L(f,g)&=\frac{1}{[\Pi_1:S]}\sum_{i=1}^{n}\frac{1}{[S:\Gamma'_1]} \sum_{\bar{\alpha}\in \Pi_2/\Gamma_2} [\Gamma_1:\Gamma'_1]\det_{\Gamma_2}^{\Gamma_1}(\psi_*-(\tau_\alpha\varphi_i)_*) \\
&= \frac{1}{[\Pi_1:\Gamma_1]}\sum_{i=1}^{n} \sum_{\bar{\alpha}\in \Pi_2/\Gamma_2} \det_{\Gamma_2}^{\Gamma_1}(\psi_*-(\tau_\alpha\varphi_i)_*)
\end{align*}
and similarly \begin{align*}
R(f,g)&\geq \frac{1}{[\Pi_1:\Gamma_1]}\sum_{i=1}^{n} \sum_{\bar{\alpha}\in \Pi_2/\Gamma_2} |\!\det_{\Gamma_2}^{\Gamma_1}(\psi_*-(\tau_\alpha\varphi_i)_*)|_\infty \\
N(f,g)&\geq \frac{1}{[\Pi_1:\Gamma_1]}\sum_{i=1}^{n} \sum_{\bar{\alpha}\in \Pi_2/\Gamma_2} |\!\det_{\Gamma_2}^{\Gamma_1}(\psi_*-(\tau_\alpha\varphi_i)_*)|.
\end{align*}
It remains to be shown that the inequalities for $R(f,g)$ and $N(f,g)$ are equalities.

For $R(f,g)$, it follows from the formula above that $R(\hat{f}_i,\hat{g})<\infty$ if and only if $\det_{\Gamma_2}^{\Gamma'_1}(\psi'_*-(\tau_\alpha\varphi'_i)_*)\neq 0$ for all $\alpha\in \Pi_2$.

For $N(f,g)$, we recall some more results from \cite{HLP2012}. Let $\Gamma''_1$ be a finite index subgroup of $\Gamma'_1$ such that $\varphi'_i(\Gamma''_1),\psi'(\Gamma''_1)\subseteq \Gamma_2$. Then there exist lifts $\bar{f}_i,\bar{g}:\Gamma''_1\orb G_1\to \Gamma_2\orb G_2$ of $(\hat{f_i},\hat{g})$ to the nilmanifolds $\Gamma''_1\orb G_1$ and $\Gamma_2\orb G_2$. For $\bar{\alpha}\in \Pi_2/\Gamma_2$, also $(\bar{\alpha}\bar{f}_i,\bar{g})$ is a pair of lifts of $(\hat{f_i},\hat{g})$. It holds that \[
N(\bar{\alpha}\bar{f}_i,\bar{g})=|\!\det_{\Gamma_2}^{\Gamma''_1}(\psi'_*-(\tau_\alpha\varphi'_i)_*)|=[\Gamma'_1:\Gamma''_1]\,|\!\det_{\Gamma_2}^{\Gamma'_1}(\psi'_*-(\tau_\alpha\varphi'_i)_*)|,
\]
and in case the coincidence class $\check{p}\Coin(\alpha\tilde{f}_i,\tilde{g})$ of $(\hat{f}_i,\hat{g})$ is essential, then $N(\bar{\alpha}\bar{f}_i,\bar{g})\neq 0$. (To see the latter, one can show analogously to Lemma \ref{lem:ind-lift} that $\ind_1(\bar{\alpha}\bar{f}_i,\bar{g},\bar{p}\Coin(\alpha\tilde{f}_i,\tilde{g}))$ (with $\bar{p}:G_1\to \Gamma''_1\orb G_1$ the natural covering map) is a multiple of $\ind_1(\hat{f}_i,\hat{g},\check{p}\Coin(\alpha\tilde{f}_i,\tilde{g}))$, so if the second is non-zero, then so is the first; and then $N(\bar{\alpha}\bar{f}_i,\bar{g})\neq 0$ since there is a coincidence class with non-zero index.)
Thus, if the coincidence class $\check{p}\Coin(\alpha\tilde{f}_i,\tilde{g})$ is essential, then $\det_{\Gamma_2}^{\Gamma'_1}(\psi'_*-(\tau_\alpha\varphi'_i)_*)\neq 0$.

So, both for $R(f,g)$ and $N(f,g)$, it suffices to show that $\coin(\tau_\alpha\varphi_i,\psi)\subseteq S$ whenever $\det_{\Gamma_2}^{\Gamma'_1}(\psi'_*-(\tau_\alpha\varphi'_i)_*)\neq 0$. In fact, we will show that in this case $\coin(\tau_\alpha\varphi_i,\psi)=1$.

We can again take $\Gamma''_1$ as above, so that $(\tau_\alpha\varphi'_i)_*$ and $\psi'_*$ are the Lie algebra morphisms induced by the restrictions $\tau_\alpha\varphi'_i,\psi':\Gamma''_1\to \Gamma_2$. Suppose $\gamma\in \coin(\tau_\alpha\varphi_i,\psi)$. Then $\gamma^{[\Pi_1:\Gamma''_1]}\in \coin(\tau_\alpha\varphi'_i,\psi')$, since $\gamma^{[\Pi_1:\Gamma''_1]}\in S$ and \[
\tau_\alpha\varphi'_i(\gamma^{[\Pi_1:\Gamma''_1]})=(\tau_\alpha\varphi_i(\gamma))^{[\Pi_1:\Gamma''_1]}=\psi(\gamma)^{[\Pi_1:\Gamma''_1]}=\psi'(\gamma^{[\Pi_1:\Gamma''_1]}).
\]
Let $\log:G_1\to \g_1$ be the logarithmic map, and let $X=\log(\gamma^{[\Pi_1:\Gamma''_1]})$. Then \[
(\tau_\alpha\varphi'_i)_*(X)=\log(\tau_\alpha\varphi'_i(\gamma^{[\Pi_1:\Gamma''_1]}))=\log(\psi'(\gamma^{[\Pi_1:\Gamma''_1]}))=\psi'_*(X),
\]
so $(\psi'_*-(\tau_\alpha\varphi'_i)_*)(X)=0$. Since $\det_{\Gamma_2}^{\Gamma'_1}(\psi'_*-(\tau_\alpha\varphi'_i)_*)\neq 0$, it follows that $X=0$, so $\gamma^{[\Pi_1:\Gamma''_1]}=\exp(X)=1$. Since $\Pi_1$ is torsion free, it follows that $\gamma=1$.
\end{proof}

\section{Root theory}\label{sec:roots}

A Nielsen root theory for $n$-valued maps has been defined in \cite{brownkolahi}, and further studied in \cite{brown2021,browngoncalves2023}. Given an $n$-valued map $f:X\to D_n(Y)$ and a point $a\in Y$, two roots of $f$ lie in the same root class if there exists a path $c:I\to X$ between them such that if $fc=\{c_1,\ldots,c_n\}$, one of the factors $c_i$ is a contractible loop at $a$. A root class is called essential if it is geometrically essential, i.e.\ it cannot vanish after a homotopy. The Nielsen root number $\mathrm{NR}(f)$ is the number of essential root classes, and it is a homotopy-invariant lower bound for \[
\MR[f]=\{\#\Root(g) \mid g\simeq f \},
\]
where $\Root(g)=\{x\in X \mid a\in g(x) \}$.

In \cite{browngoncalves2023}, the authors define lifting classes, a Reidemeister root number, and -- if $X$ and $Y$ are closed orientable manifolds of the same dimension -- a root index, in terms of the factors $\hat{f}_i:\hat{X}\to D_n(Y)$. They call two factors $\hat{f}_i$ and $\hat{f}_j$ equivalent if, in our terminology, the indices $i$ and $j$ belong to the same $\sigma$-class, and they define the Reidemeister root number as \[
\mathrm{RR}(f)=\sum_{[i]_\sigma} \mathrm{RR}(\hat{f}_i)
\] 
where $\mathrm{RR}(\hat{f}_i)$ is the number of root classes of $\hat{f}_i$.

A result essential for the well-behavedness of this number $\mathrm{RR}(f)$, and also for the definition of the root index, is \cite[Theorem 2.4(b)]{browngoncalves2023}, which states that for every root class $R$ of $f$, for every $i$, either $\hat{p}^{-1}(R)\cap \Root(\hat{f}_i)$ is empty or it is a root class $\hat{R}_i$ of $\hat{f}_i$. However, what they actually show is that $\hat{p}^{-1}(R)$ is a union of root classes of the factors $\hat{f}_i$, which does not necessarily imply the statement; it only implies that $\hat{p}^{-1}(R)\cap \Root(\hat{f}_i)$ is a \emph{union} of root classes of $\hat{f}_i$, for each $i$.

To give a counterexample and a correction, we observe that this can be viewed as a special case of Lemma \ref{lem:coin-ij}. The roots of $f$ are the coincidences of $f$ with the constant map $g:X\to Y:x\mapsto a$ (which we will simply write as $a:X\to Y$). The corresponding map $\hat{g}:\hat{X}\to Y$ is also the constant map $a$, and for the lift $\tilde{g}:\tilde{X}\to \tilde{Y}$ we can take the constant map $\tilde{a}$ for any $\tilde{a}\in\tilde{Y}$ with $q(\tilde{a})=a$. By Theorem \ref{thm:cc-paths}, the root classes of $f$ are precisely the non-empty coincidence classes of $(f,a)$; the root classes of $\hat{f}_i$ are the non-empty coincidence classes of $(\hat{f}_i,a)$. Let us call the coincidence classes of $(f,a)$ (including the empty ones) the \emph{algebraic root classes} of $f$, and those defined using paths (which coincide with the non-empty algebraic root classes) the \emph{geometric root classes}. Thus, the algebraic root classes of $f$ are the sets $p\Coin(\alpha\tilde{f}_i,\tilde{a})=p\Root(\alpha\tilde{f}_i)$, indexed by $[(\alpha,i)]\in\R[\varphi,\psi]$, where $\varphi:\Pi\to \Delta^n\rtimes \S_n$ is the morphism of $f$ and $\psi:\Pi\to \Delta$ is the morphism of $a$; the latter is the trivial morphism. The algebraic root classes of $\hat{f}_i$ are the sets $\check{p}\Coin(\alpha\tilde{f}_i,\tilde{a})=\check{p}\Root(\alpha\tilde{f}_i)$, indexed by $[(\alpha,i)]\in\R[\varphi'_i,\psi']$, where $\varphi'_i:S\to \Delta$ is the restriction of $\varphi_i$ and $\psi':S\to \Delta$ is the restriction of $\psi$, so also the trivial morphism. Lemma \ref{lem:coin-ij} in this setting reads:

\begin{lem}
Let $R=p\Root(\alpha\tilde{f}_i)$ be an (algebraic) root class of $f$.
If $i\not\sim j$, then \[
\hat{p}^{-1}(R)\cap\Root(\hat{f}_j)=\varnothing.
\] 
If $i\sim j$, say $i=\sigma_\gamma(j)$, then \[
\hat{p}^{-1}(R)\cap\Root(\hat{f}_j)=\bigsqcup_{\substack{[\beta]\in \R[\varphi'_j,\psi'] \\ \beta\sim_{\varphi_j,\psi}\,\alpha^\gamma}} \hat{R}_{[\beta]},
\]
where $\hat{R}_{[\beta]}=\check{p}\Root(\beta\tilde{f}_j)$.
\end{lem}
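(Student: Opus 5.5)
This statement is Lemma \ref{lem:coin-ij} specialised to $g=a$, the constant map. The plan is therefore a translation step by step rather than a new argument. We take $\tilde{g}=\tilde{a}$, the constant lift, whose induced morphism $\psi:\Pi\to\Delta$ is trivial, since $\tilde{a}\gamma=\tilde{a}=1\cdot\tilde{a}$. Then $\hat{g}=g\circ\hat{p}$ is again the constant map $a$, and $\tilde{a}$ is also a lift of $\hat{g}$, with induced morphism $\psi'=\psi|_S$, which is trivial as well.

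First we identify the objects. By definition,
\[
\Coin(\alpha\tilde{f}_i,\tilde{a})=\{\tilde{x}\mid \alpha\tilde{f}_i(\tilde{x})=\tilde{a}\}=\Root(\alpha\tilde{f}_i),
\]
taking roots at $\tilde{a}$. Hence $R=p\Coin(\alpha\tilde{f}_i,\tilde{a})$ is the coincidence class of $(f,a)$ indexed by $[(\alpha,i)]\in\R[\varphi,\psi]$, and $\hat{R}_{[\beta]}=\check{p}\Coin(\beta\tilde{f}_j,\tilde{a})$ is the coincidence class of $(\hat{f}_j,a)$ indexed by $[\beta]\in\R[\varphi'_j,\psi']$. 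We also have $\Root(\hat{f}_j)=\Coin(\hat{f}_j,\hat{g})$.

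With these identifications, both displayed statements are literally the two cases of Lemma \ref{lem:coin-ij} applied with $g=a$ and $\tilde{g}=\tilde{a}$. For $i=\sigma_\gamma(j)$, the element $\alpha^\gamma=\psi(\gamma)^{-1}\alpha\varphi_i(\gamma)$ simplifies to $\alpha\varphi_i(\gamma)$, and the relation $\sim_{\varphi_j,\psi}$ becomes $\beta\sim\beta'$ if and only if $\beta=\beta'\varphi_j(\gamma'^{-1})$ for some $\gamma'\in S_j$. No simplification is actually needed, though, because the statement keeps the general notation.

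The only point requiring care, which I would address explicitly, is that Lemma \ref{lem:coin-ij} was stated for the fixed lifts $\tilde{f}$ and $\tilde{g}$ chosen at the start of \S\ref{subsec:comparing}. Its proof uses only the partition results (Propositions \ref{prop:cc-partition} and \ref{prop:cc-equiv-R}) and the decomposition of $\Coin(\hat{f}_j,\hat{g})$ into classes; these hold for any choice of lifts, in particular for $\tilde{g}=\tilde{a}$. I therefore expect no real obstacle; the main work is confirming that the constant lift is a legitimate choice, which is immediate since $q\circ\tilde{a}=a\circ p$.
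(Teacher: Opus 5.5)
Your proposal is correct and matches the paper, which obtains this lemma simply by specialising Lemma~\ref{lem:coin-ij} to the constant map $g=a$, with constant lift $\tilde{g}=\tilde{a}$ and trivial morphisms $\psi$ and $\psi'$, exactly as you do. The only thing that needs checking is your remark that the lifts fixed in \S\ref{subsec:comparing} were arbitrary, so $\tilde{a}$ is a legitimate choice, and that remark is right.
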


To give an example where the union consists of multiple root classes of $\hat{f}_j$, it suffices to find a map $f$ for which, for some $j$, there are multiple classes in $\R[\varphi'_j,\psi']$ that correspond to the same class in $\R[\varphi_j,\psi]$.

\begin{ex}
Let $f$ be the $3$-valued map of the $2$-torus used as an example in \cite[Section 4]{charlotte1}, defined by the lift \[
\tilde{f}:\mathbb{R}^2\to F_3(\mathbb{R}^2,\ZZ^2):(t_1,t_2)\mapsto ((\tfrac{t_1}{2},-t_2),(\tfrac{t_1}{2}+\tfrac{1}{2},-t_2),(-t_1,-t_2+\tfrac{1}{2}))
\]
It is shown in \cite{charlotte1} that $S_1=S_2=2\ZZ\times \ZZ$ and $S_3=\ZZ^2$ (so $S=2\ZZ\times \ZZ$), and $\varphi_3:\ZZ^2\to \ZZ^2$ is given by $(z_1,z_2)\mapsto (-z_1,-z_2)$. Hence we have \begin{align*}
(z_1,z_2) \sim_{\varphi_3,\psi} (z'_1,z'_2) &\iff \exists (k,\ell) \in \ZZ^2: (z_1,z_2)=(z'_1,z'_2)+(k,\ell) \\
(z_1,z_2) \sim_{\varphi'_3,\psi'} (z'_1,z'_2) &\iff \exists (k,\ell) \in 2\ZZ\times \ZZ: (z_1,z_2)=(z'_1,z'_2)+(k,\ell).
\end{align*}
So $\R[\varphi_3,\psi]$ consists of one class, and $\R[\varphi'_3,\psi']$ consists of two classes (the pairs $(z_1,z_2)$ with $z_1$ even, and those with $z_1$ odd). In particular, for the root class $R=p\Root(\tilde{f}_3)$, the lemma implies \[
\hat{p}^{-1}(R)\cap\Root(\hat{f}_3)=\hat{R}_{[(0,0)]}\sqcup \hat{R}_{[(1,0)]}.
\]
Note that both classes $\hat{R}_{[(0,0)]}=\check{p}\Root(\tilde{f}_3)$ and $\hat{R}_{[(1,0)]}=\check{p}\Root(\tilde{f}_3+(1,0))$ are non-empty (and hence geometric): for any value of $\tilde{a}\in \RR^2$, the maps $\tilde{f}_3$ and $\tilde{f}_3+(1,0)$ have roots.
\end{ex}

Thus, the statement of \cite[Theorem 2.4(b)]{browngoncalves2023} is (in general) wrong. Consequently, some other results of \cite{browngoncalves2023} relying on this statement are no longer valid. In particular, they define the index of a root class $R$ of $f$ as the index of the root class $\hat{R}_i$ with $\hat{p}(\hat{R}_i)=R$. Since there is not necessarily a unique such class $\hat{R}_i$, this definition no longer makes sense. Also, the number $\mathrm{RR}(f)$ defined above is no longer equal to the number of algebraic root classes of $f$ (which is never claimed in \cite{browngoncalves2023} as they only consider geometric coincidence classes, but it would hold if Theorem 2.4(b) were true, and it is a natural thing to ask).

Instead, a Reidemeister theory and index for roots could be obtained by specialising our coincidence theory to the case $g=a$. The results of \cite[Section 7]{browngoncalves2023} concerning the index remain valid, since classes that can disappear after a homotopy still have index zero (cf.\ Corollary \ref{cor:index} (v)).

For the same reason, the geometric Nielsen number $\mathrm{NR}(f)$ defined above satisfies \[
N(f,a)\leq \mathrm{NR}(f)\leq \MC[f,a]=\MR[f]
\]
(where the last equality follows from Theorem \ref{thm:MC}).
In particular, by the Wecken property (Theorem \ref{thm:Wecken}):

\begin{thm}
Let $f:X\to D_n(Y)$ be an $n$-valued map between closed orientable triangulable manifolds of equal dimension $m\geq 3$. Then \[
N(f,a)=\mathrm{NR}(f)=\MR[f].
\]
\end{thm}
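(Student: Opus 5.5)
The plan is to prove the chain $N(f,a)\leq \mathrm{NR}(f)\leq \MR[f]\leq N(f,a)$, which forces equality throughout.

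\emph{Step 1: $N(f,a)\leq \mathrm{NR}(f)$.} By Theorem \ref{thm:cc-paths}, the geometric root classes of $f$ are exactly the non-empty coincidence classes of $(f,a)$. Every coincidence class with non-zero index is non-empty by the solution property (Corollary \ref{cor:index} (v)). It is also geometrically essential. Indeed, given any homotopy $f\simeq f'$, regard it as a homotopy of pairs $(f_t,a)$, and let $C$ be a class of $(f,a)$ with $\ind\neq 0$. By the proof of Theorem \ref{thm:hom-inv}, the corresponding class $C_1$ of $(f',a)$ (same index $[(\alpha,i)]\in\R[\varphi,\psi]$) has the same non-zero index, so it is non-empty. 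The slices $C_t$ all lie in the single class $p\Coin(\alpha\tilde{F}_i,\tilde{a})$ of the homotopy. This is exactly the sense in which a geometric root class is required not to vanish, so $C$ is counted in $\mathrm{NR}(f)$.

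\emph{Step 2: $\mathrm{NR}(f)\leq \MR[f]$.} This is the known lower-bound property of the geometric Nielsen root number recalled at the start of this section.

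\emph{Step 3: $\MR[f]\leq N(f,a)$.} Theorem \ref{thm:Wecken} gives $f'\simeq f$ and $g'\simeq a$ with $\#\Coin(f',g')=N(f,a)$. Apply Theorem \ref{thm:MC} with $f_0=f'$, $g_0=g'$ and a homotopy $g_t$ from $g'$ to the constant map $a$. This produces $f''\simeq f'\simeq f$ with $\Coin(f'',a)=\Coin(f',g')$. Since $Y$ is a manifold, the hypotheses of Theorem \ref{thm:MC} hold. Hence $\#\Root(f'')=N(f,a)$, which gives $\MR[f]\leq N(f,a)$.

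The only delicate point is Step 1. There one must match the paper's algebraic classes to Brown--Kolahi's geometric essentiality, and make sure the class of the homotopy singled out by their definition contains the class corresponding to $C$. Since $g=a$ is single-valued, this containment is unique by the transitivity argument in the introduction. Steps 2 and 3 are direct citations.
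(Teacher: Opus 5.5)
Your proposal is correct and follows essentially the same route as the paper: the chain $N(f,a)\leq \mathrm{NR}(f)\leq \MR[f]=\MC[f,a]=N(f,a)$. The first inequality holds because a class with non-zero index cannot vanish under a homotopy, which you make explicit via the proof of Theorem \ref{thm:hom-inv} and the solution property; the remaining steps use Theorem \ref{thm:MC} together with the Wecken Theorem \ref{thm:Wecken}, and your Step 3 simply unpacks $\MR[f]\leq \MC[f,a]$ exactly as the paper derives it in the introduction.
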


\end{document}